\newtheorem{prop}{Proposition}[subsection]
\newtheorem{theo}[prop]{Théor\`eme}
\newtheorem*{theo**}{Théorème}
\newtheorem{coro}[prop]{Corollaire}
\newtheorem*{conj*}{Conjecture}
\newtheorem{lemm}[prop]{Lemme}
\newtheorem{lemm*}{Lemme}[prop]
\theoremstyle{definition}
\newtheorem{vide}[prop]{}
\newtheorem{defi}[prop]{Définition}
\newtheorem*{defi*}{Définition}
\theoremstyle{remark}
\newtheorem{rema}[prop]{Remarques}
\newtheorem{nota}[prop]{Notations}
\numberwithin{equation}{prop}
\newcommand{\riso}{ \overset{\sim}{\longrightarrow}\, }
\newcommand{\liso}{ \overset{\sim}{\longleftarrow}\, }
\renewcommand{\AA}{{\mathcal{A}}}
\newcommand{\FF}{{\mathcal{F}}}
\newcommand{\B}{{\mathcal{B}}}
\newcommand{\E}{{\mathcal{E}}}
\newcommand{\G}{{\mathcal{G}}}
\newcommand{\M}{{\mathcal{M}}}
\newcommand{\D}{{\mathcal{D}}}
\newcommand{\PP}{{\mathcal{P}}}
\renewcommand{\O}{{\mathcal{O}}}
\newcommand{\V}{\mathcal{V}}
\newcommand{\ZZ}{\mathcal{Z}}
\newcommand{\X}{\mathfrak{X}}
\newcommand{\U}{\mathfrak{U}}
\renewcommand{\L}{\mathbb{L}}
\newcommand{\R}{\mathbb{R}}
\newcommand{\Q}{\mathbb{Q}}
\newcommand{\Z}{\mathbb{Z}}
\newcommand{\N}{\mathbb{N}}
\newcommand{\hdag}{  \phantom{}{^{\dag} }    }
\begin{document}

\title{Sur la préservation de la cohérence par image inverse extraordinaire par une immersion fermée}
\author{Daniel Caro} 
\date{}
\maketitle

\begin{abstract}
 
Let $\mathcal{V}$ be a complete discrete valuation ring of unequal characteristic with perfect residue field, 
$u\colon \mathcal{Z} \hookrightarrow \mathfrak{X}$ be a closed immersion of smooth, quasi-compact, separated formal schemes over $\mathcal{V}$, 
$T$ be a divisor of $X$ such that $U:= T \cap Z$ is a divisor of $Z$, 
$\mathfrak{D}$ a strict normal crossing divisor of $\mathfrak{X}$ such that $u ^{-1} (\mathfrak{D})$ is a strict normal crossing divisor of 
$\mathcal{Z}$. We pose $\mathfrak{X} ^{\sharp}:= (\mathfrak{X}, \mathfrak{D})$,
$\mathcal{Z} ^{\sharp}:= (\mathcal{Z}, u ^{-1}\mathfrak{D})$
and $u ^{\sharp}\colon \mathcal{Z} ^{\sharp} \hookrightarrow \mathfrak{X} ^{\sharp}$
the exact closed immersion of smooth logarithmic formal schemes over $\V$.
In Berthelot's theory of arithmetic $\mathcal{D}$-modules,
we work with the inductive system of sheaves of rings 
$\smash{\widehat{\mathcal{D}}} _{\mathfrak{X} ^{\sharp}} ^{(\bullet)} (T): =
(\smash{\widehat{\mathcal{D}}} _{\mathfrak{X} ^{\sharp}} ^{(m)}(T))_{m\in \N}$,
where $\smash{\widehat{\mathcal{D}}} _{\mathfrak{X} ^{\sharp}} ^{(m)}$ is the $p$-adic completion of the ring
of differential operators of level $m$ over $\mathfrak{X} ^{\sharp}$ 
and where $T$ means that we add overconvergent singularities along the divisor $T$.
Moreover, Berthelot introduced the sheaf 
$\mathcal{D} ^{\dag} _{\mathfrak{X} ^{\sharp}} (\hdag T) _{\mathbb{Q}}:=\underset{\underset{m}{\longrightarrow}}{\lim}\,
\smash{\widehat{\mathcal{D}}} _{ \mathfrak{X} ^{\sharp}} ^{(m)} (T) \otimes _{\mathbb{Z} }\mathbb{Q}$ 
of differential operators over $\mathfrak{X} ^{\sharp}$ of finite level with overconvergent singularities along $T$.
Let $\mathcal{E} ^{(\bullet)} \in \smash{\underrightarrow{LD}} ^{\mathrm{b}} _{\mathbb{Q}, \mathrm{coh}} 
( \smash{\widehat{\mathcal{D}}} _{\mathfrak{X} ^{\sharp}} ^{(\bullet)} (T))$
and 
$\mathcal{E} := \underrightarrow{\lim}  ~ (\mathcal{E} ^{(\bullet)}) $
the corresponding objet of $D ^{\mathrm{b}} _{\mathrm{coh}}( \smash{\mathcal{D}} ^\dag _{\mathfrak{X} ^{\sharp}}(\hdag T) _{\mathbb{Q}} )$. 
In this paper, we study sufficient conditions on $\mathcal{E}$
so that if $u ^{\sharp !} (\mathcal{E}) \in D ^{\mathrm{b}} _{\mathrm{coh}}( \smash{\mathcal{D}} ^\dag _{\mathcal{Z} ^{\sharp}}(\hdag U) _{\mathbb{Q}} )$
then 
$u ^{\sharp (\bullet) !} (\mathcal{E} ^{(\bullet)}) \in \smash{\underrightarrow{LD}} ^{\mathrm{b}} _{\mathbb{Q}, \mathrm{coh}} ( \smash{\widehat{\mathcal{D}}} _{\mathcal{Z} ^{\sharp}} ^{(\bullet)} (U))$.
For instance, we check that this is the case when 
$\mathcal{E}$ is a coherent $ \smash{\mathcal{D}} ^\dag _{\mathfrak{X}^{\sharp}}(\hdag T) _{\mathbb{Q}}$-module
such that the cohomological spaces of 
$u ^{\sharp !} (\mathcal{E})$ are isocrysals on $\mathcal{Z} ^{\sharp}$ overconvergent along $U$. 

\end{abstract}

\selectlanguage{frenchb}

\tableofcontents
\date

\section*{Introduction}

Soit $\V$ un anneau de valuation discrète complet d'inégales caractéristiques $(0,p)$, 
de corps résiduel parfait et de corps des fractions $K$.
Soient $u\colon \ZZ  \hookrightarrow \mathfrak{X}$ une immersion fermée de $\V$-schémas formels séparés, quasi-compacts et lisses, 
$T$ un diviseur de $X$ tel que $U:= T \cap Z$ soit un diviseur de $Z$.
Soit $\mathfrak{D}$ un diviseur à croisements normaux strict de $\mathfrak{X}$ tel que 
$u ^{-1} (\mathfrak{D})$ soit un diviseur à croisements normaux strict de
$\mathcal{Z}$. On pose $\mathfrak{X} ^{\sharp}:= (\mathfrak{X}, \mathfrak{D})$,
$\mathcal{Z} ^{\sharp}:= (\mathcal{Z}, u ^{-1}\mathfrak{D})$
et $u ^{\sharp}\colon \mathcal{Z} ^{\sharp} \hookrightarrow \mathfrak{X} ^{\sharp}$
l'immersion fermée exacte de schémas formels logarithmiques lisses sur $\V$.
Pour simplifier la présentation de cette introduction, 
supposons que $u$ soit de codimension pure égale à $1$.
Pour tout entier $m\in \N$, 
on note
$\smash{\widehat{\D}} _{\X ^{\sharp}} ^{(m)} (T):=
\widehat{\B} ^{(m)} _{\X} ( T)  \smash{\widehat{\otimes}} _{\O _{\X}} \smash{\widehat{\D}} _{\X ^{\sharp}} ^{(m)}$, 
où $\widehat{\B} ^{(m)} _{\X} ( T) $ désigne les faisceaux d'anneaux construits par Berthelot dans
\cite[4.2.3]{Be1} et 
$\smash{\D} _{\X ^{\sharp}} ^{(m)}$ est le faisceau des opérateurs différentiels de niveau $m$ sur $\X$
(voir \cite[2.2]{Be1} puis sa version logarithmique dans \cite{these_montagnon}),
le chapeau signifiant la complétion $p$-adique.
On dispose de plus des morphismes canoniques de changement de niveaux
$\smash{\widehat{\D}} _{\X ^{\sharp}} ^{(m)} (T)
\to 
\smash{\widehat{\D}} _{\X ^{\sharp}} ^{(m+1)} (T)$ (voir \cite{these_montagnon} ou pour sa version non logarithmique \cite{Be1}), ce qui donne le 
système inductif d'anneaux 
$\smash{\widehat{\D}} _{\X ^{\sharp}} ^{(\bullet)} (T): =
(\smash{\widehat{\D}} _{\X ^{\sharp}} ^{(m)} (T))_{m\in \N}$. 
Berthelot construit le faisceau des opérateurs différentiels de niveau fini
en posant 
$\D ^{\dag} _{ \X ^{\sharp}}(\hdag T) _{\Q}:=\underset{\underset{m}{\longrightarrow}}{\lim}\,
\smash{\widehat{\D}} _{ \X ^{\sharp}} ^{(m)} (T) \otimes _{\Z }\Q$.
Par tensorisation par $\Q$ et passage à la limite sur le niveau, on obtient le foncteur noté
$\underrightarrow{\lim}\colon 
D ^{\mathrm{b}} ( \smash{\widehat{\D}} _{\X ^{\sharp}} ^{(\bullet)} (T))
\to 
D ^{\mathrm{b}} (\D ^{\dag} _{ \X ^{\sharp}}(\hdag T) _{\Q})$.
Afin d'obtenir un foncteur pleinement fidèle 
qui factorise ce foncteur $\underrightarrow{\lim}$, 
Berthelot a introduit la catégorie 
$\smash{\underrightarrow{LD}} ^{\mathrm{b}} _{\Q}
 ( \smash{\widehat{\D}} _{\X ^{\sharp}} ^{(\bullet)} (T))$ qui est une localisation 
 de 
$D ^{\mathrm{b}} ( \smash{\widehat{\D}} _{\X ^{\sharp}} ^{(\bullet)} (T))$.
Il a défini la sous-catégorie pleine des complexes cohérents de 
$\smash{\underrightarrow{LD}} ^{\mathrm{b}} _{\Q}
 ( \smash{\widehat{\D}} _{\X ^{\sharp}} ^{(\bullet)} (T))$
qu'il note
$\smash{\underrightarrow{LD}} ^{\mathrm{b}} _{\Q, \mathrm{coh}} ( \smash{\widehat{\D}} _{\X ^{\sharp}} ^{(\bullet)} (T))$.
Il a alors établi que le foncteur 
$\underrightarrow{\lim}$
induit l'équivalence de catégories
\begin{equation}
\notag
(*)\hspace{1cm}
\underrightarrow{\lim} 
\colon 
\smash{\underrightarrow{LD}} ^{\mathrm{b}} _{\Q, \mathrm{coh}} ( \smash{\widehat{\D}} _{\X ^{\sharp}} ^{(\bullet)} (T))
\cong
D ^{\mathrm{b}} _{\mathrm{coh}}( \smash{\D} ^\dag _{\X ^\sharp} (\hdag T) _{\Q} ).
\end{equation}

Soit
$\E ^{(\bullet)}$ est un objet 
de $\smash{\underrightarrow{LD}} ^{\mathrm{b}} _{\Q, \mathrm{coh}} ( \smash{\widehat{\D}} _{\X ^{\sharp}} ^{(\bullet)} (T))$
et 
$\E := \underrightarrow{\lim}  ~ (\E ^{(\bullet)}) $
l'objet de $D ^{\mathrm{b}} _{\mathrm{coh}}( \smash{\D} ^\dag _{\X ^{\sharp}}(\hdag T) _{\Q} )$ correspondant. 
On dispose de $u ^{\sharp (\bullet)!} (\E ^{(\bullet)}) $ l'image inverse extraordinaire de 
$\E ^{(\bullet)}$ par $u$ et  
de
$\R \underline{\Gamma} ^{\dag} _Z (\E ^{(\bullet)}) $ le foncteur cohomologique local à support strict
dans $Z$ de $\E ^{(\bullet)}$.
Ces foncteurs s'étendent naturellement à 
$D ^{\mathrm{b}} _{\mathrm{coh}}(\smash{\D} ^\dag _{\X ^{\sharp}}(\hdag T) _{\Q})$ 
et sont compatibles à l'équivalence de catégories (*) ci-dessus, i.e., 
on bénéficie des isomorphismes canoniques fonctoriels en $\E^{(\bullet)}$ de la forme:
$\R \underline{\Gamma} ^{\dag} _Z (\E)
\riso 
\underrightarrow{\lim}  ~\circ \R \underline{\Gamma} ^{\dag} _Z (\E ^{(\bullet)})$
et
$u ^{!\sharp} (\E)
\riso 
\underrightarrow{\lim}  ~ \circ u ^{\sharp (\bullet)!} (\E ^{(\bullet)})$.
Comme conséquence immédiate de \cite{caro-stab-sys-ind-surcoh},
on vérifie que les trois propriétés $u ^{\sharp (\bullet)!} (\E ^{(\bullet)}) \in 
\smash{\underrightarrow{LD}} ^{\mathrm{b}} _{\Q, \mathrm{coh}} ( \smash{\widehat{\D}} _{\ZZ ^{\sharp}}  ^{(\bullet)} (U))$, 
$\R \underline{\Gamma} ^{\dag} _Z (\E ^{(\bullet)}) \in 
\smash{\underrightarrow{LD}} ^{\mathrm{b}} _{\Q, \mathrm{coh}} ( \smash{\widehat{\D}} _{\X ^{\sharp}} ^{(\bullet)} (T))$
et 
$\R \underline{\Gamma} ^{\dag} _Z (\E) \in D ^{\mathrm{b}} _{\mathrm{coh}}( \smash{\D} ^\dag _{\X ^{\sharp}} ( T) _{\Q} )$
sont équivalentes (voir la preuve de \ref{5assert-eq}).
De plus, 
il est évident que si 
$u ^{\sharp (\bullet)!} (\E ^{(\bullet)}) \in 
\smash{\underrightarrow{LD}} ^{\mathrm{b}} _{\Q, \mathrm{coh}} ( \smash{\widehat{\D}} _{\ZZ ^{\sharp}}  ^{(\bullet)} (U))$
alors 
$u ^{!\sharp} (\E)\in D ^{\mathrm{b}} _{\mathrm{coh}}( \smash{\D} ^\dag _{\ZZ ^\sharp}(\hdag U) _{\Q} )$. 
La réciproque est loin d'être claire. 
La raison est que 
pour tout objet $\FF ^{(\bullet)} $
de 
$\smash{\underrightarrow{LD}} ^{\mathrm{b}} _{\Q} ( \smash{\widehat{\D}} _{\ZZ ^{\sharp}}  ^{(\bullet)} (U))$,
la propriété que
$\underrightarrow{\lim}  ~ (\FF ^{(\bullet)})\in D ^{\mathrm{b}} _{\mathrm{coh}}( \smash{\D} ^\dag _{\ZZ ^\sharp}(\hdag U) _{\Q} )$ 
n'implique pas en général que $\FF ^{(\bullet)}\in 
\smash{\underrightarrow{LD}} ^{\mathrm{b}} _{\Q, \mathrm{coh}} ( \smash{\widehat{\D}} _{\ZZ ^{\sharp}}  ^{(\bullet)} (U))$.
Lorsque $\E$ est un $ \smash{\D} ^\dag _{\X ^\sharp} (\hdag T) _{\Q} $-module cohérent,
nous nous intéressons dans ce papier à cette réciproque.
Nous prouvons en particulier que si  
les espaces de cohomologies de $u ^{!\sharp} (\E)$ sont des isocristaux sur $\ZZ$
surconvergent le long d'un diviseur de $U$, alors 
$u ^{\sharp (\bullet)!} (\E ^{(\bullet)}) \in \smash{\underrightarrow{LD}} ^{\mathrm{b}} _{\Q, \mathrm{coh}} ( \smash{\widehat{\D}} _{\ZZ ^{\sharp}}  ^{(\bullet)}(U))$.
\bigskip

Précisons à présent le contenu de ce papier. 
Dans le premier chapitre, nous donnons quelques préliminaires topologiques concernant
les $K$-espaces topologiques localement convexes. Nous rappelons notamment la définition des espaces de type LB
et nous reprenons quelques points sur les produits tensoriels complétés
de modules localement convexes dans le contexte qui nous sera utile dans la suite de ce travail. 
Dans le deuxième chapitre, nous munissons naturellement les isocristaux surconvergents et le faisceau
des opérateurs différentiels de niveau fini à singularités surconvergentes d'une structure canonique d'espace de type LB.
Après quelques propriétés topologiques sur les foncteurs images directes et images inverses extraordinaire par une immersion fermée,
nous établissons dans le dernier chapitre le résultat principal décrit en début d'introduction de ce papier. 
Nous finissons par des applications du théorème principale aux log-isocristaux surconvergents 
satisfaisant certaines propriétés de type non-Liouville.
Les résultats que l'on déduit généralisent les propositions \cite[1.3.13, 2.2.9, 2.3.4]{caro-Tsuzuki}
et nous obtenons en fait des preuves plus simples (on utilise néanmoins \cite[1.3.13]{caro-Tsuzuki} et l'on démontre le reste).
L'une de ces généralisations est, grâce à une remarque de \cite{AC-weil2} (un travail en commun avec Tomoyuki Abe, 
plus précisément, voir la remarque \ref{AC}), 
de ne pas supposer que l'on dispose d'une rétraction lisse $\X \to \ZZ$ de $u$ 
(hypothèse qui apparaît par exemple dans le théorème
\cite[1.3.13]{caro-Tsuzuki}).

\subsection*{Remerciement}
Je remercie Tomoyuki Abe pour ses commentaires concernant 
le fait que les $\D$-modules arithmétiques cohérents sont de type LB
et 
notre travail en commun 
qui a conduit à la remarque  \ref{AC}.

\section*{Notations}
Dans ce papier, 
on désigne par $\V$ un anneau de valuation discrète complet d'inégales caractéristiques $(0,p)$, 
$k$ son corps résiduel supposé parfait, $K$ son corps des fractions et $\pi$ une uniformisante. 
Les faisceaux seront notés par des lettres calligraphiques, 
leurs sections globales par la lettre droite associée. 
Les modules sont par défaut à gauche. 
On notera avec des chapeaux les complétions $p$-adiques et si 
$\E$ est un faisceau en groupes abéliens alors on posera $\E _{\Q}:= \E \otimes _{\Z} \Q$. 
Soient $\AA$ un faisceau d'anneaux sur un espace topologique $X$.
Si $*$ est l'un des symboles $+$, $-$, ou $\mathrm{b}$, $D ^* ( \AA )$ désigne
la catégorie dérivée des complexes de $\AA$-modules (à gauche) vérifiant les conditions correspondantes d'annulation
des faisceaux de cohomologie. Lorsque l'on souhaite préciser entre droite et gauche, on précise alors comme suit
$D ^* ( \overset{ ^\mathrm{g}}{}\AA )$ ou $D ^* ( \AA \overset{ ^\mathrm{d}}{})$.
On note $D ^{\mathrm{b}} _{\mathrm{coh}} ( \AA )$
la sous-catégorie pleine de $D  ( \AA )$
des complexes à cohomologie cohérente et bornée.
On suppose (sans nuire à la généralité) que tous les
$k$-schémas sont réduits et on pourra confondre les diviseurs avec leur support.
Les $\V$-schémas formels seront indiqués par des lettres calligraphiques ou gothiques 
et leur fibre spéciale par la lettre droite correspondante.

\section{Préliminaires topologiques}
Notons $\mathfrak{C}$ la catégorie des $K$-espaces vectoriels topologiques localement convexes.
Notons $\mathfrak{D}$ la sous-catégorie pleine de $\mathfrak{C}$ des $K$-espaces séparés et complets. 
On remarque qu'un morphisme surjectif $ V \to V''$ de $\mathfrak{C}$ est le conoyau de son noyau dans $\mathfrak{C}$
si et seulement si $V''$ est muni de la topologie quotient. 

\subsection{Espaces de type $LB$.}
Nous agglomérons ce dont nous aurons besoin sur les $K$-espaces de type $LB$, 
surtout du lemme \ref{qsepLBestLB} mais aussi de sa preuve (voir l'étape $2$ de la preuve de \ref{theo-u^*coh}).
\begin{vide}
Soit $(V _i ) _{i\in I}$ un système inductif filtrant de $\mathfrak{C}$.
Posons $V := \underrightarrow{\lim} _i \, V _i$ la limite inductive calculée dans $\mathfrak{C}$.
En tant que $K$-espace vectoriel, $V$ est la limite inductive de $(V _i ) _{i\in I}$ calculée 
dans la catégorie des $K$-espaces vectoriels. La topologie localement convexe sur $V$ est la plus fine rendant continue
tous les morphismes canoniques $V _i \to V$.

\end{vide}

\begin{rema}
\label{im-dense}
Soient 
$(V _i ) _{i\in I}$ et $(W _i ) _{i\in I}$ deux systèmes inductifs filtrants de $\mathfrak{C}$,
$f _i \colon V _i \to W _i$ une famille compatible de morphismes de $\mathfrak{C}$
et 
$f \colon 
\underrightarrow{\lim} _i \, V _i
\to 
\underrightarrow{\lim} _i  \, W _i
$
le morphisme de $\mathfrak{C}$ induit par passage à la limite inductive. 
Si pour tout $i$ l'image de $f _i$ est dense dans $W _i$ alors l'image de
$f $
est dense. 

En effet, si $F$ est un fermé de $\underrightarrow{\lim} _i  \, W _i$
contenant l'image de $f$, alors l'image inverse de $F$ sur $W _i$ est un fermé contenant
l'image de $f _i$ qui est dense dans $W _i$. La flèche canonique 
$W _i \to \underrightarrow{\lim} _i  \, W _i$ se factorise donc toujours
via $W _i \to F$. D'où $F = \underrightarrow{\lim} _i  \, W _i$.
\end{rema}

\begin{lemm}
\label{lim-surj-strict}
Soient 
$(V _i ) _{i\in I}$ et $(W _i ) _{i\in I}$ deux systèmes inductifs filtrants de $\mathfrak{C}$,
$f _i \colon V _i \to W _i$ une famille compatible de morphismes surjectifs, stricts de $\mathfrak{C}$
et 
$f \colon 
\underrightarrow{\lim} _i \, V _i
\to 
\underrightarrow{\lim} _i  \, W _i
$
le morphisme de $\mathfrak{C}$ induit par passage à la limite inductive. 
Alors $f$ est un morphisme surjectif strict.
\end{lemm}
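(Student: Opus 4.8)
The plan is to establish the two assertions separately, reducing strictness to an openness property of $f$ on $0$-neighborhoods. Throughout, write $\rho_i\colon V_i \to V:=\varinjlim_i V_i$ and $\sigma_i\colon W_i \to W:=\varinjlim_i W_i$ for the canonical morphisms, so that $\sigma_i \circ f_i = f\circ \rho_i$ for all $i$.

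Surjectivity is purely algebraic. As $K$-vector spaces, $V$ and $W$ are the ordinary filtered colimits of the $V_i$ and $W_i$, and filtered colimits of vector spaces are exact; hence $f=\varinjlim_i f_i$ is surjective. Concretely, any $w\in W$ equals $\sigma_i(w_i)$ for some $i$ and some $w_i\in W_i$; lifting $w_i$ through the surjection $f_i$ to $v_i\in V_i$ gives $f(\rho_i(v_i))=\sigma_i(f_i(v_i))=w$.

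For strictness, since $f$ is surjective it is equivalent (by the remark at the start of the section, that a surjective morphism of $\mathfrak{C}$ is the cokernel of its kernel exactly when the target carries the quotient topology) to prove that $f$ is open, i.e. that $f(\mathcal{U})$ is a $0$-neighborhood of $W$ for every $0$-neighborhood $\mathcal{U}$ of $V$. Here I would use the standard description of the locally convex inductive limit topology, which follows from the defining property that it is the finest locally convex topology making all $\rho_i$ continuous: a convex, balanced, absorbing set $B\subseteq V$ is a $0$-neighborhood if and only if $\rho_i^{-1}(B)$ is a $0$-neighborhood of $V_i$ for every $i$, and such sets form a base of $0$-neighborhoods (one checks this via Minkowski gauges, using $p_B\circ\rho_i=p_{\rho_i^{-1}(B)}$). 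The hypothesis that each $f_i$ is surjective and strict says precisely that each $f_i$ is open. It then suffices to treat a basic neighborhood $\mathcal{U}$, that is, a convex balanced absorbing set with $\rho_i^{-1}(\mathcal{U})$ a $0$-neighborhood of $V_i$ for all $i$. Its image $f(\mathcal{U})$ is again convex and balanced, and it is absorbing (given $w\in W$, pick $v$ with $f(v)=w$; since $\mathcal{U}$ absorbs $v$, $f(\mathcal{U})$ absorbs $w$), so by the same criterion it is enough to show that $\sigma_i^{-1}(f(\mathcal{U}))$ is a $0$-neighborhood of $W_i$ for each $i$. The key point is the inclusion $f_i(\rho_i^{-1}(\mathcal{U}))\subseteq \sigma_i^{-1}(f(\mathcal{U}))$: if $\rho_i(x)\in\mathcal{U}$ then $\sigma_i(f_i(x))=f(\rho_i(x))\in f(\mathcal{U})$. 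Since $\rho_i^{-1}(\mathcal{U})$ is a $0$-neighborhood of $V_i$ and $f_i$ is open, $f_i(\rho_i^{-1}(\mathcal{U}))$ is a $0$-neighborhood of $W_i$, hence so is the larger set $\sigma_i^{-1}(f(\mathcal{U}))$. Thus $f(\mathcal{U})$ is a $0$-neighborhood of $W$, $f$ is open, and therefore strict.

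The main obstacle—indeed essentially the only subtle point—is the correct handling of the locally convex inductive limit topology. One must argue with the intrinsic criterion ``convex balanced absorbing with all preimages neighborhoods'' rather than trying to describe $0$-neighborhoods as convex hulls of unions $\bigcup_i \sigma_i(\text{nbhd})$, and one must not forget to verify that $f(\mathcal{U})$ stays absorbing, which is where surjectivity of $f$ re-enters. Once these are in place, the remainder is the formal commutation $\sigma_i\circ f_i=f\circ\rho_i$ combined with the openness of each $f_i$.
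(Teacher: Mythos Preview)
Your proof is correct, but the paper argues differently. Rather than verifying openness of $f$ by chasing $0$-neighborhoods, the paper uses the following factorization criterion: a continuous surjection $f\colon V\to W$ is strict if and only if every linear map $g\colon W\to W'$ with $g\circ f$ continuous is itself continuous. Given such a $g$, one sets $g_i := g\circ\sigma_i$; then $g_i\circ f_i$ is continuous (it equals $(g\circ f)\circ\rho_i$), and since $f_i$ is surjective and strict, $g_i$ is continuous. The universal property of $W=\varinjlim_i W_i$ then forces $g$ to be continuous.

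The paper's argument is slightly more economical: it sidesteps the explicit description of the inductive limit topology and the verification that $f(\mathcal{U})$ remains absorbing, convex, and balanced, trading those for a single application of the universal property. Your approach, on the other hand, makes the topology visible and would generalize more readily to settings where one wants quantitative control over the image neighborhoods. Both arguments rely on the same underlying fact (strictness of each $f_i$) in essentially dual ways: the paper pushes test maps \emph{out} of $W$, while you push neighborhoods \emph{into} $W$.
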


\begin{proof}
La surjectivité de $f$ est déjà connue. 
Posons $W:= \underrightarrow{\lim} _i  \, W _i$.
Comme le morphisme $f$ est continu et surjectif, 
la propriété que $f$ soit strict est alors équivalente à la propriété
suivante: tout morphisme $g \colon W \to W'$ tel que $g \circ f $ soit continu 
est lui-même continu. Soit $g \colon W \to W'$ tel que $ g \circ f $ soit continu.
Notons $g _{i}\colon W _i \to W'$ le composé du morphisme canonique 
$W _i \to W$ avec $g$. 
Comme $f _i$ est surjectif et strict, 
comme $ g _i \circ f _i \colon V _{i} \to W'$ 
est continu (car composé de $V _i \to V $ avec $g \circ f$), 
les morphismes $g _i$ sont alors continus. 
D'après la propriété universelle de la limite inductive, 
$g$ est donc aussi continu.
\end{proof}

\begin{lemm}
\label{LB-mNm}
Soient $(V _i ) _{i\in I}$ un système inductif filtrant de $\mathfrak{C}$
et $J$ une partie cofinale de $I$.
L'isomorphisme $K$-linaire canonique
$\underrightarrow{\lim} _{j\in J}  V _j
\riso 
\underrightarrow{\lim} _{i\in I}  V _i$ 
est alors un homéomorphisme.
\end{lemm}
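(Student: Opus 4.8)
La preuve sera purement formelle et reposera sur la propri\'et\'e universelle de la limite inductive dans $\mathfrak{C}$ d\'ej\`a utilis\'ee dans la preuve de \ref{lim-surj-strict}: pour tout $W$ de $\mathfrak{C}$, une application lin\'eaire de source une limite inductive $\underrightarrow{\lim} _i V _i$ est continue si et seulement si toutes ses compos\'ees avec les morphismes canoniques $V _i \to \underrightarrow{\lim} _i V _i$ le sont. Le plan est alors de v\'erifier s\'epar\'ement la continuit\'e de l'isomorphisme canonique $\phi \colon \underrightarrow{\lim} _{j\in J} V _j \to \underrightarrow{\lim} _{i\in I} V _i$ et celle de son inverse. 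Que $\phi$ soit d\'ej\`a un isomorphisme $K$-lin\'eaire r\'esulte de la cofinalit\'e de $J$ au niveau des limites inductives de $K$-espaces vectoriels, ce qui est acquis; il ne reste donc qu'\`a traiter la partie topologique.

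Pour la continuit\'e de $\phi$, j'appliquerais la propri\'et\'e universelle de $\underrightarrow{\lim} _{j\in J} V _j$: il suffit de voir que pour tout $j\in J$, la compos\'ee $V _j \to \underrightarrow{\lim} _{j\in J} V _j \xrightarrow{\phi} \underrightarrow{\lim} _{i\in I} V _i$ est continue. Or, par construction de $\phi$, cette compos\'ee n'est autre que le morphisme canonique $V _j \to \underrightarrow{\lim} _{i\in I} V _i$ (puisque $j\in J \subseteq I$), qui est continu par d\'efinition m\^eme de la topologie de limite inductive. Donc $\phi$ est continu.

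Pour la continuit\'e de $\phi ^{-1}$, j'appliquerais la propri\'et\'e universelle de $\underrightarrow{\lim} _{i\in I} V _i$: il suffit de voir que pour tout $i\in I$, la compos\'ee $V _i \to \underrightarrow{\lim} _{i\in I} V _i \xrightarrow{\phi ^{-1}} \underrightarrow{\lim} _{j\in J} V _j$ est continue. C'est ici qu'intervient la cofinalit\'e: on choisit $j\in J$ tel que $i \leq j$, d'o\`u un morphisme de transition continu $V _i \to V _j$. Le point cl\'e, et la principale v\'erification, consiste \`a constater que cette compos\'ee co\"incide avec $V _i \to V _j \to \underrightarrow{\lim} _{j\in J} V _j$; cela se lit sur les \'el\'ements en suivant l'identification alg\'ebrique sous-jacente \`a $\phi$, la classe de $x\in V _i$ dans la limite sur $I$ \'etant aussi celle de son image dans $V _j$, que $\phi ^{-1}$ renvoie sur la classe correspondante dans la limite sur $J$. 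Comme compos\'ee de deux morphismes continus, elle est continue, d'o\`u la continuit\'e de $\phi ^{-1}$.

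La principale difficult\'e \`a soigner est donc cette derni\`ere identification de morphismes \`a valeurs dans la limite sur $J$, qui repose sur la commutativit\'e des diagrammes d\'efinissant $\phi$ et sur l'ind\'ependance du r\'esultat vis-\`a-vis du choix de $j\in J$ au-dessus de $i$ (garantie par le caract\`ere filtrant du syst\`eme). Une fois ce point acquis, la conclusion que $\phi$ est un hom\'eomorphisme est imm\'ediate.
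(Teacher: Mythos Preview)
Your proposal is correct and follows essentially the same approach as the paper: both arguments exploit the universal property of the inductive limit in $\mathfrak{C}$ together with cofinality to produce continuous maps in each direction that agree with the underlying $K$-linear isomorphism. The paper's version is more compressed---it simply notes that the two canonical continuous maps built from the universal properties commute with the forgetful functor to $K$-vector spaces and are therefore mutually inverse bijections---while you spell out explicitly the verification for $\phi^{-1}$ via a choice of $j\geq i$; but the substance is identical.
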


\begin{proof}
Les morphismes canoniques $K$-linéaires et continues
$\underrightarrow{\lim} _{j\in J}  V _j
\to 
\underrightarrow{\lim} _{i\in I}  V _i$ 
et
$\underrightarrow{\lim} _{i\in I}  V _i
\to 
\underrightarrow{\lim} _{j\in J}  V _j$ 
commutent au foncteur {\og oubli de la topologie\fg}.
Ce sont donc des bijections. 
\end{proof}

\begin{defi}
\label{defi-LB}
Un $K$-espace de type $LB$ est un $K$-espace localement convexe séparé $V$ 
tel qu'il existe, pour tout entier $m \in \N$, des morphismes continues de $K$-espaces de Banach 
$V _{m} \to V _{m+1}$ et un homéomorphisme
de la forme 
$\underrightarrow{\lim} _m  \, V _m \riso V$.
\end{defi}

\begin{rema}
Dans la définition de $K$-espace de type $LB$ de \ref{defi-LB} et avec ses notations, il n'est pas restrictif 
de supposer que les morphismes $V _m \to V _{m+1}$ soit injectifs. 
En effet, si on note $j _m \colon V _m \to V$, $W _m := V _m / \ker j _m$ muni de la topologie quotient, 
topologie qui 
en fait un $K$-espace de Banach (car $V$ est séparé donc $W _m$ est un quotient séparé d'un $K$-espace de Banach), 
on vérifie par propriété universelle que les morphismes $K$-linéaires canoniques réciproques 
$\underrightarrow{\lim} _m  V _m \to 
\underrightarrow{\lim} _m  W _m$
et 
$\underrightarrow{\lim} _m  W_m\to 
\underrightarrow{\lim} _m  V _m$ 
sont continus. 
\end{rema}

\begin{lemm}
\label{qsepLBestLB}
Un quotient séparé d'un espace de type $LB$ est un espace de type $LB$.
\end{lemm}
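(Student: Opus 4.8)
The plan is to realise $V''$ explicitly as an inductive limit of Banach spaces built from a presentation of $V$. Write $V = \varinjlim_m V_m$ with the $V_m$ Banach and transition maps $\phi_m\colon V_m \to V_{m+1}$, and let $j_m\colon V_m \to V$ and $q\colon V \to V''$ denote the canonical maps, so that $q$ is a strict surjection and $V''$ carries the quotient topology (cf. the remark opening \ref{defi-LB}'s section and \ref{lim-surj-strict}). Set $f_m := q\circ j_m \colon V_m \to V''$. Since $V''$ is separated and $f_m$ is continuous, the subspace $\ker f_m$ is closed in $V_m$, so that $W_m := V_m/\ker f_m$, equipped with the quotient topology, is again a $K$-Banach space. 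From $f_{m+1}\circ \phi_m = f_m$ one gets $\phi_m(\ker f_m) \subseteq \ker f_{m+1}$, whence continuous transition maps $\overline\phi_m\colon W_m \to W_{m+1}$ and injective continuous maps $\overline f_m\colon W_m \to V''$.

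First I would check that the induced $K$-linear map $h\colon \varinjlim_m W_m \to V''$ is a bijection. Surjectivity follows from the surjectivity of $q$ together with $V = \varinjlim_m V_m$ (every element of $V''$ lifts to some $V_m$, hence comes from $W_m$); injectivity is immediate from the definition $W_m = V_m/\ker f_m$, since a class of $V_m$ killed by $\overline f_m$ is already zero in $W_m$.

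The core of the argument is to promote $h$ to a homeomorphism. The quotient maps $V_m \to W_m$ are strict surjections of $\mathfrak{C}$, so by Lemma \ref{lim-surj-strict} the induced map $g\colon V = \varinjlim_m V_m \to \varinjlim_m W_m$ is a strict, hence open, surjection, and by construction $h\circ g = q$. Since $q$ is itself open and $g$ is open and surjective, for any open $U$ of $\varinjlim_m W_m$ one has $h(U) = q(g^{-1}(U))$, which is open in $V''$; thus $h$ is an open continuous bijection, i.e. a homeomorphism. Equivalently, one may compare open sets directly: a set $U\subseteq V''$ is open for the quotient topology iff $f_m^{-1}(U)$ is open in $V_m$ for every $m$, and since $V_m\to W_m$ is a quotient map this is exactly the condition defining the inductive limit topology on $\varinjlim_m W_m$.

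I expect the delicate point to be precisely this topological identification: keeping straight that the inductive limit topology, the quotient topologies defining the $W_m$, and the quotient topology on $V''$ all match up, which is where Lemma \ref{lim-surj-strict} (or the open-set bookkeeping) does the real work. The hypothesis that the quotient be \emph{separated} is essential and enters exactly once, in guaranteeing that each $\ker f_m$ is closed so that the $W_m$ are genuine $K$-Banach spaces; granting this, the homeomorphism $V''\liso \varinjlim_m W_m$ exhibits $V''$ as an $LB$-space in the sense of \ref{defi-LB}.
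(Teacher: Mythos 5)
Your argument is correct and follows essentially the same route as the paper: your $W_m = V_m/\ker(q\circ j_m)$ is exactly the paper's $G^{(m)} = V_m / i_m^{-1}(W)$, the separatedness of $V''$ is used in the same place to see that these quotients are Banach, and the decisive step in both cases is Lemma \ref{lim-surj-strict} applied to the strict surjections $V_m \twoheadrightarrow W_m$ to identify $\varinjlim_m W_m \to V''$ as a homeomorphism. The only cosmetic difference is that the paper first reduces to injective transition maps (via the remark following \ref{defi-LB}), which your version does not need.
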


\begin{proof}
Pour tout entier $m \in \N$, donnons-nous des monomorphismes continues de $K$-espaces de Banach 
$V _{m} \to V _{m+1}$.
On note
$V := \underrightarrow{\lim} _m  \, V _m $
et $i _m \colon V _m \hookrightarrow V$ les monomorphismes continues canoniques.
Soit $G:=V /W$ un quotient de $V$ qui soit séparé.
Notons $G ^{(m)}:=  V _m / i _m ^{-1}(W) $ muni de la topologie quotient, 
i.e. telle que la surjection canonique 
$\pi _m \colon  V _m
\twoheadrightarrow
G ^{(m)}$
soit stricte. 
Notons
$j _m \colon G ^{(m)} \to G$ l'injection canonique. 
Comme $j _m \circ \pi _m$ est continu, comme 
$\pi _m$ est strict, $j _m$ est donc continu.  
Comme $G$ est séparé, il en est alors de même de $G ^{(m)}$.
Ainsi $G ^{(m)}$ est un $K$-espace de Banach. 
Il découle de \ref{lim-surj-strict} que le morphisme de gauche du diagramme canonique
\begin{equation}
\xymatrix @R=0,3cm{
{\underrightarrow{\lim} _m  \, G ^{(m)}} 
\ar[r] ^-{\sim} 
& 
{G } 
\\ 
{\underrightarrow{\lim} _m  \, V _m} 
\ar[r] ^-{\sim}
\ar@{->>}[u] ^-{}
& 
{V ,} 
\ar@{->>}[u] ^-{}
}
\end{equation}
est un épimorphisme strict. 
Comme il en est de même du morphisme de droite, 
comme l'isomorphisme du bas est un homéomorphisme,
il en est donc de même de l'isomorphisme du haut. 
\end{proof}

\subsection{Topologie projective d'un produit tensoriel sur une $K$-algèbre}
\label{section1.2}
Soit $D$ une $K$-algèbre (sans topologie et non nécessaire commutative) telle que $K$ soit dans le centre de $D$.
Notons $\mathfrak{C} _{D, g}$ (resp. $\mathfrak{C} _{D, d}$) 
la sous-catégorie pleine de 
$\mathfrak{C}$ des objets de $\mathfrak{C}$ tels que la structure de $K$-espace vectoriel se prolonge en 
une structure de $D$-module à gauche (resp. à droite).

\begin{vide}
[Application $D$-balancée continue et complétion]
\label{constr-hatbeta-carrecomm}
Soient $V$ un objet de $\mathfrak{C} _{D, d}$,
$W$ un objet de $\mathfrak{C} _{D, g}$
et
$U$ un objet de $\mathfrak{C}$.
On munit $V \times W$ de la topologie produit, i.e. $V \times W$ est le produit calculé dans
$\mathfrak{C}$.
Soit
$\beta \colon V \times W \to U$ une application $D$-balancée.
Par $K$-bilinéarité de $\beta$, si $L$ est un sous-$\V$-module (resp. un réseau de $U$ au sens de \cite[2.1]{Schneider-NonarchFuncAn}) de $U$,
alors $\phi ^{-1} (L)$ est un sous-$\V$-module (resp. un réseau) de $V \times W$.
De plus, d'après \cite[17.1]{Schneider-NonarchFuncAn}, comme $\beta$ est $K$-bilinéaire, 
l'application $\beta$ est continue si et seulement si elle l'est en zéro, i.e., 
pour tout sous-$\V$-module ouvert de $L$ de $U$, il existe des sous-$\V$-modules ouverts respectivement $M$ de $V$ et $N$ de $W$
tels que $\beta ( M \times N) \subset L$.

Supposons $\beta$ continue. 
On dispose alors du morphisme $K$-bilinéaire
$\underset{M, N}{\underleftarrow{\lim}}\,  V\times W / M\times N \to \underset{L}{\underleftarrow{\lim}} \, V /L$,
où 
$L$ (resp. $M$, resp. $N$) parcourt les réseaux ouverts de $U$ (resp. $V$, resp. $W$).
On note $\widehat{\beta}\colon \widehat{V} \times  \widehat{W} \to  \widehat{U}$, 
cette application. Comme les réseaux ouverts de $\widehat{U}$ sont de la forme
$\underset{L}{\underleftarrow{\lim}} \, L _0 /L$ où
$L _0$ est un réseau ouvert de $U$ et $L$ parcourt les réseaux ouverts de $U$ inclus dans $L _0$
(et de même pour $ \widehat{V} \times  \widehat{W}$), 
on vérifie alors que $\widehat{\beta}$ est continue.
Comme l'image de $V \times W$ dans  
$\widehat{V} \times  \widehat{W}$ est dense, on vérifie que $\widehat{\beta}$ 
est l'unique application $K$-bilinéaire continue induisant le carré commutatif: 
\begin{equation}
\label{hatbeta-carrecomm}
\xymatrix @R=0,3cm {
{\widehat{V} \times  \widehat{W}} 
\ar@{.>}[r] ^-{\widehat{\beta}}
& 
{\widehat{U} } 
\\ 
{V \times W} 
\ar[r] ^-{\beta}
\ar[u] ^-{}
& 
{U.} 
\ar[u] ^-{}
}
\end{equation}
Par contre, pour que $\widehat{\beta}$ soit $D$-balancée, il faut des hypothèses topologiques sur $D$
(voir \ref{Dtopo-hatbeta-carrecomm}). 
\end{vide}

\begin{vide}
\label{def-prod-tens}

Soient $V$ un objet de $\mathfrak{C} _{D, d}$
et $W$ un objet de $\mathfrak{C} _{D, g}$.
En munissant $V \times W$ de la topologie produit, 
la topologie projective sur le produit tensoriel
$V \otimes _{D} W$ est la topologie $K$-localement convexe la plus fine 
telle que le morphisme $K$-bilinéaire canonique
$\rho _{V,W}\colon V \times W \to V \otimes _{D} W$ 
soit continue.
Ainsi, un réseau $L \subset V \otimes _{D} W$ est ouvert si et seulement si 
$\rho _{V,W} ^{-1} (L)$ est ouvert. 
Comme nous ne considérerons que des topologies de type projectif sur les produits tensoriels,
nous pourrons omettre d'indiquer le qualificatif {\og projectif \fg}.
 
L'objet $V \otimes _{D} W$ vérifie 
la propriété universelle: 
pour toute application $D$-balancée et 
continue 
de la forme
$\phi \colon V \times W \to U$, 
il existe un unique 
morphisme dans $\mathfrak{C}$
de la forme 
$\theta \colon V \otimes _{D} W \to U$ 
tel que $\theta \circ \rho _{V,W} = \phi$.

On en déduit que l'on obtient en fait le bifoncteur canonique 
$$- \otimes _{D} -\colon \mathfrak{C} _{D, d} \times \mathfrak{C} _{D, g} \to \mathfrak{C}.$$

\end{vide}

\begin{lemm}
\label{lemm-topo-tens-prod}
Soient $V$ un objet de $\mathfrak{C} _{D, d}$
et $W$ un objet de $\mathfrak{C} _{D, g}$.
On suppose qu'il existe un sous-$\V$-module $V _0$ de $V$ (resp. $W _0$ de $W$) tel qu'une base de voisinages de zéro de $V$ (resp. $W$)
soit donnée par la famille $(p ^{n} V _0) _{n\in\N}$ (resp. $(p ^{n} W _0) _{n\in\N}$).
Notons $U _0:= <\rho _{V,W}(V _0 \times W _0)>$, 
où $<?>$ désigne le {\og sous $\V$-module de 
$V \otimes _{D} W$ engendré par $?$\fg}.
Alors une base de voisinages sur 
$V \otimes _{D} W$ muni de sa topologie canonique (voir \ref{def-prod-tens})
est donnée par 
$(p ^n U _0) _{n\in \N}$.
\end{lemm}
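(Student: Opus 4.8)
The plan is to verify directly that the family $(p^nU_0)_{n\in\N}$ is a fundamental system of neighbourhoods of $0$ for the projective topology of \ref{def-prod-tens}, by checking the two required properties: that each $p^nU_0$ is a neighbourhood of $0$, and that every neighbourhood of $0$ contains one of them. Throughout I write $\rho:=\rho_{V,W}\colon V\times W\to V\otimes_D W$. Observe first that the hypothesis makes $V_0$ and $W_0$ absorbing: since $(p^nV_0)_n$ is a basis of neighbourhoods of $0$ in the topological $K$-vector space $V$, continuity of scalar multiplication gives, for every $v\in V$, an $a\in\N$ with $v\in p^{-a}V_0$, and similarly for $W$. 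I shall use repeatedly the identity $\rho(p^iV_0\times p^jW_0)=p^{i+j}\rho(V_0\times W_0)\subseteq p^{i+j}U_0$ (valid for $i,j\in\Z$), which is merely the $K$-bilinearity of $\rho$ combined with the definition $U_0=\langle\rho(V_0\times W_0)\rangle$, together with the evident inclusions $p^kU_0\subseteq p^nU_0$ for $k\ge n$.

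For the first property, by the criterion recalled in \ref{def-prod-tens} it suffices to prove that $\rho^{-1}(p^nU_0)$ is open in $V\times W$. The point requiring care is that $\rho$ is only bilinear, so $\rho^{-1}(p^nU_0)$ is \emph{not} a sub-$\V$-module and one cannot argue by translation from the origin; openness must be established at each of its points. So let $(v,w)$ satisfy $\rho(v,w)\in p^nU_0$ and pick $a,b\in\N$ with $v\in p^{-a}V_0$ and $w\in p^{-b}W_0$. For $(\delta v,\delta w)\in p^{n+b}V_0\times p^{n+a}W_0$ one expands
\[
\rho(v+\delta v,\,w+\delta w)=\rho(v,w)+\rho(v,\delta w)+\rho(\delta v,w)+\rho(\delta v,\delta w),
\]
and each of the four terms lies in $p^nU_0$ by the displayed identity (the last in $p^{2n+a+b}U_0\subseteq p^nU_0$), so their sum does too. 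Hence $(v,w)+\bigl(p^{n+b}V_0\times p^{n+a}W_0\bigr)\subseteq\rho^{-1}(p^nU_0)$, which shows that $\rho^{-1}(p^nU_0)$ is open, and therefore that $p^nU_0$ is an open neighbourhood of $0$.

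For the second property, let $L$ be any open lattice of $V\otimes_D W$. Since $\rho$ is continuous and $K$-bilinear, the characterisation of continuity at zero recalled in \ref{constr-hatbeta-carrecomm} provides open sub-$\V$-modules $M\subseteq V$ and $N\subseteq W$ with $\rho(M\times N)\subseteq L$. Choosing $a,b$ with $p^aV_0\subseteq M$ and $p^bW_0\subseteq N$, the displayed identity gives $p^{a+b}\rho(V_0\times W_0)=\rho(p^aV_0\times p^bW_0)\subseteq L$; as $L$ is a sub-$\V$-module and $p^{a+b}\rho(V_0\times W_0)$ generates $p^{a+b}U_0$ over $\V$, we conclude $p^{a+b}U_0\subseteq L$. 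Thus every open lattice, hence every neighbourhood of $0$, contains some $p^nU_0$.

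Combining the two steps, the $p^nU_0$ are open neighbourhoods of $0$ that are cofinal among all neighbourhoods of $0$, so they form a fundamental system of neighbourhoods of $0$ for the canonical topology, as claimed. I expect the openness step to be the only genuine obstacle: one must avoid the reflex of treating $\rho^{-1}(p^nU_0)$ as a subgroup and instead exploit the absorbing property of $V_0$ and $W_0$ to verify openness pointwise through the bilinear expansion. The cofinality step, by contrast, is a routine application of the continuity-at-zero criterion of \ref{constr-hatbeta-carrecomm}.
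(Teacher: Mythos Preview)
Your proof is correct and follows the same two--step strategy as the paper's. The only difference lies in the first step: the paper simply notes that $\rho^{-1}(p^n U_0)\supset p^n(V_0\times W_0)$ and immediately concludes that $p^n U_0$ is open, whereas you, observing (correctly) that $\rho^{-1}(p^n U_0)$ is not a sub-$\V$-module, go on to verify openness at every point via the bilinear expansion. This extra work, while correct, is unnecessary: since $p^n U_0$ is a lattice, it belongs to the projective topology precisely when $\rho$ remains continuous after declaring it open, and for a $K$-bilinear map this is equivalent to continuity at $0$ (as recalled in \ref{constr-hatbeta-carrecomm}); so the single inclusion $\rho(p^nV_0\times p^nW_0)\subset p^nU_0$ already suffices. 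Your cofinality argument is identical to the paper's (it gives $p^{a+b}U_0\subset L$ where the paper writes $p^{2n}U_0\subset L$ after choosing a common $n$).
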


\begin{proof}
Comme $V _0$ et $W _0$ sont des réseaux respectifs de $V$ et $W$, 
$U _0$ est  un réseau de  $U$.
Pour tout entier $n\in \N$, comme $\rho _{V,W} ^{-1} (p ^n U _0)\supset p ^{n} (V _0 \times W _0)$, 
les $p ^n U _0$ sont donc des ouverts de $V \otimes _{D} W$.
Réciproquement, soit $L$ un sous-$\V$-module ouvert de $V \otimes _{D} W$. 
Il existe alors un entier $n$ assez grand tel que 
$\rho _{V,W} ^{-1} (L) \supset p ^n (V _0 \times W _0)$.
Comme $L$ est un $\V$-module, 
on a alors
$L \supset <\rho _{V,W} (p ^n (V _0 \times W _0))>
=
p ^{2n} <\rho _{V,W} (V _0 \times W _0)>
=
p ^{2n} U _0$.

\end{proof}

\begin{vide}
Soit $D ' \to D$ est un homomorphisme de $K$-algèbres tel que $K$ soit aussi dans le centre de $D'$. 
Soient $V$ un objet de $\mathfrak{C} _{D, d}$
et $W$ un objet de $\mathfrak{C} _{D, g}$.
Comme le composé  
$V \times W \to V \otimes _{D'} W \to V \otimes _{D} W$
est le morphisme canonique, par définition des topologies définies sur 
$V \otimes _{D'} W$ et $ V \otimes _{D} W$,
l'épimorphisme 
$V \otimes _{D'} W \to V \otimes _{D} W$ est donc strict.  
\end{vide}

\begin{lemm}
\label{epi-otimes-strict}
Soient $V \to V''$ (resp. $W \to W''$)
un épimorphisme strict de 
$\mathfrak{C} _{D, d}$
(resp. $W$ un objet de $\mathfrak{C} _{D, g}$).
Les épimorphismes 
$V \otimes _{D} W \to V \otimes _{D} W''$
et
$V \otimes _{D} W \to V'' \otimes _{D} W$
sont alors stricts. 
\end{lemm}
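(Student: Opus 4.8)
The plan is to show that each of the two canonical epimorphisms carries the projective quotient topology, following the pattern of the strictness argument given just before this lemma and of the proof of \ref{lim-surj-strict}. I will treat the first morphism $t := \mathrm{id}_V \otimes q \colon V \otimes_D W \to V \otimes_D W''$ induced by a strict epimorphism $q \colon W \to W''$ of $\mathfrak{C}_{D,g}$ (with $V$ an object of $\mathfrak{C}_{D,d}$); the second morphism $V \otimes_D W \to V'' \otimes_D W$ is entirely symmetric and I would simply invoke that symmetry. Surjectivity of $t$ is clear from the right exactness of the algebraic tensor product $V \otimes_D (-)$, and continuity of $t$ follows from the universal property built into \ref{def-prod-tens} (the map $\rho_{V,W''} \circ (\mathrm{id}_V \times q)$ is $D$-balanced and continuous). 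Hence the whole content is the strictness, i.e. that the projective topology on $V \otimes_D W''$ coincides with the quotient topology induced by $t$ from $V \otimes_D W$.

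The key preliminary step I would isolate is that a strict epimorphism $q \colon W \to W''$ of $\mathfrak{C}_{D,g}$ maps open $\V$-submodules onto open $\V$-submodules. Indeed, for an open $\V$-submodule $N$ of $W$ the image $q(N)$ is a $\V$-submodule with $q^{-1}(q(N)) = N + \ker q \supseteq N$ open; since strictness of $q$ means that $W''$ carries the quotient topology, $q(N)$ is open. From this I would deduce that $\mathrm{id}_V \times q \colon V \times W \to V \times W''$ is an open continuous surjection — a basic neighbourhood $M \times N$ of zero is sent to the open neighbourhood $M \times q(N)$ — hence a strict epimorphism for the product topologies, an open continuous surjection being a quotient map.

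With this in hand the comparison of topologies is formal. Writing $\rho := \rho_{V,W}$ and $\rho'' := \rho_{V,W''}$ for the canonical bilinear maps, the relevant square commutes, so $t \circ \rho = \rho'' \circ (\mathrm{id}_V \times q)$. Let $L''$ be a $\V$-submodule of $V \otimes_D W''$ whose preimage $t^{-1}(L'')$ is open; by \ref{def-prod-tens} applied to $V \otimes_D W$, its preimage $\rho^{-1}(t^{-1}(L'')) = (\mathrm{id}_V \times q)^{-1}\bigl((\rho'')^{-1}(L'')\bigr)$ is open in $V \times W$, and since $\mathrm{id}_V \times q$ is strict this forces $(\rho'')^{-1}(L'')$ to be open in $V \times W''$, i.e. $L''$ is open for the projective topology, again by \ref{def-prod-tens}. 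Conversely, every projective-open submodule has open $t$-preimage because $t$ is continuous. Hence the projective topology on $V \otimes_D W''$ equals the quotient topology induced by $t$, which is exactly the assertion that $t$ is a strict epimorphism.

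The only genuine obstacle is the key preliminary step, namely controlling images of open lattices under a strict surjection; once that openness is established the remainder is the routine diagram chase through the universal property of \ref{def-prod-tens}. I would double-check that the neighbourhood bases used are bases of $\V$-submodules (lattices), so that the additive manipulation $q^{-1}(q(N)) = N + \ker q$ and the passage to the product topologies are legitimate, appealing if needed to the explicit lattice description of the tensor topology in \ref{lemm-topo-tens-prod}.
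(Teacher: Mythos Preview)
Your proof is correct and follows essentially the same route as the paper's: both reduce the strictness of $t$ to the strictness of $\mathrm{id}_V \times q$ via the commutative square $t \circ \rho = \rho'' \circ (\mathrm{id}_V \times q)$ and the lattice characterization of openness from \ref{def-prod-tens}. You supply more detail than the paper (in particular you verify explicitly that $\mathrm{id}_V \times q$ is an open surjection, which the paper simply asserts as ``$V \times W \to V \times W''$ est strict''), but the structure of the argument is identical.
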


\begin{proof}
Par symétrie, 
vérifions-le seulement pour le premier. 
Soit $L$ un réseau de $V \otimes _{D} W''$.
Comme $V \times W \to V \times W''$ est strict, 
par définition de la topologie sur 
$V \otimes _{D} W''$,
$L$ est ouvert si est seulement si son image inverse 
sur $V \times W $ est un ouvert. Par définition de la topologie sur 
$V \otimes _{D} W$, cela équivaut au fait que son image inverse sur 
$V \otimes _{D} W $ soit un ouvert.
D'où le résultat.

\end{proof}

Nous ne devrions pas avoir besoin des deux lemmes qui suivent mais cela vaut sans-doute la peine de l'écrire:

\begin{lemm}
\label{limD-balance}
Soient $(V _i ) _{i\in I}$ un système inductif filtrant de 
$\mathfrak{C} _{D, d}$,
$(W _i ) _{i\in I}$ un système inductif filtrant de 
$\mathfrak{C} _{D, g}$,
$U$ un objet de $\mathfrak{C} $,
$\beta _i \colon V _i \times W _i \to U$
une famille compatible d'applications $D$-balancées continues.
Alors l'application $D$-balancée 
$\beta:= \underrightarrow{\lim} _i V _i \times W _i \to U$
est continue. 
\end{lemm}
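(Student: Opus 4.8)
Le plan est de se ramener, gr\^ace \`a la propri\'et\'e universelle du produit tensoriel projectif (\ref{def-prod-tens}), \`a une limite inductive de morphismes $K$-lin\'eaires continus, pour laquelle la continuit\'e sera formelle, le v\'eritable travail \'etant ensuite de nature topologique. Tout d'abord, d'apr\`es \ref{def-prod-tens}, chaque application $D$-balanc\'ee continue $\beta _i \colon V _i \times W _i \to U$ se factorise de mani\`ere unique sous la forme $\beta _i = \theta _i \circ \rho _{V _i, W _i}$, o\`u $\theta _i \colon V _i \otimes _D W _i \to U$ est un morphisme de $\mathfrak{C}$. L'unicit\'e de cette factorisation, jointe \`a la compatibilit\'e de la famille $(\beta _i) _i$ au syst\`eme inductif, entra\^{\i}ne que la famille $(\theta _i) _i$ est elle aussi compatible. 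Comme les $\theta _i$ sont cette fois de v\'eritables morphismes de $\mathfrak{C}$, la propri\'et\'e universelle de la limite inductive calcul\'ee dans $\mathfrak{C}$ fournit un unique morphisme, n\'ecessairement continu, $\theta \colon \underrightarrow{\lim} _i (V _i \otimes _D W _i) \to U$.

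Ensuite, les limites inductives filtrantes commutant aux produits finis et au produit tensoriel sur $D$ au niveau des $K$-espaces vectoriels sous-jacents, l'application canonique $\rho \colon \underrightarrow{\lim} _i (V _i \times W _i) \to \underrightarrow{\lim} _i (V _i \otimes _D W _i)$ induite par la famille $(\rho _{V _i, W _i}) _i$ est bien d\'efinie, et $\beta = \theta \circ \rho$, les deux membres co\"{\i}ncidant avec $\beta _i$ sur l'image de chaque $V _i \times W _i$. Il suffit donc d'\'etablir la continuit\'e de $\rho$, autrement dit que la topologie projective du produit tensoriel commute \`a la limite inductive filtrante.

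C'est cette derni\`ere continuit\'e qui constitue, \`a mon sens, le principal obstacle. Je proc\'ederais en comparant les bases de voisinages de z\'ero de part et d'autre : un syst\`eme fondamental de voisinages de z\'ero de $\underrightarrow{\lim} _i (V _i \otimes _D W _i)$ est donn\'e par les sous-$\V$-modules engendr\'es par les r\'eunions des images des voisinages de z\'ero des $V _i \otimes _D W _i$ d\'ecrits en \ref{lemm-topo-tens-prod}, et de m\^eme pour $\underrightarrow{\lim} _i (V _i \times W _i)$. Pour contr\^oler l'image r\'eciproque par $\rho$ d'un tel voisinage, on d\'ecompose un \'el\'ement, par $K$-bilin\'earit\'e, en une combinaison $\V$-lin\'eaire finie de tenseurs \'el\'ementaires, puis on exploite le caract\`ere filtrant du syst\`eme pour ramener ces tenseurs \`a un indice commun $m$, indice o\`u la continuit\'e de $\theta _m$, et donc celle de $\beta _m$ via le crit\`ere de continuit\'e en z\'ero des applications bilin\'eaires rappel\'e en \ref{constr-hatbeta-carrecomm}, permet de conclure. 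La subtilit\'e, et l'endroit o\`u la filtration du syst\`eme intervient de fa\c{c}on essentielle, est de rendre cette mise \`a un indice commun compatible avec un unique choix global de voisinages ; c'est l\`a le seul point qui r\'eclame un argument soigneux, le reste \'etant formel.
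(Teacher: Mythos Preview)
Votre d\'etour par les produits tensoriels ne simplifie rien : la continuit\'e de $\rho \colon \underrightarrow{\lim}_i (V_i \times W_i) \to \underrightarrow{\lim}_i (V_i \otimes_D W_i)$, que vous isolez comme {\og principal obstacle\fg}, est exactement le cas particulier du lemme o\`u l'on prend $U = \underrightarrow{\lim}_i (V_i \otimes_D W_i)$ et $\beta_i$ \'egal au compos\'e de $\rho_{V_i, W_i}$ avec la fl\`eche canonique vers la limite. Vous avez remplac\'e le probl\`eme par un cas particulier tout aussi difficile, sans le r\'esoudre. Votre esquisse pour ce cas est de plus confuse : vous invoquez \ref{lemm-topo-tens-prod}, dont l'hypoth\`ese (bases de voisinages de la forme $(p^n V_0)_n$) n'est pas disponible ici ; et {\og d\'ecomposer un \'el\'ement en combinaison de tenseurs \'el\'ementaires\fg} n'a pas de sens sur le domaine $\underrightarrow{\lim}_i (V_i \times W_i)$, dont les \'el\'ements sont des images de couples, non des sommes de tenseurs.

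La preuve de l'article est directe et tient en une ligne, sans passer par les produits tensoriels. Notons $g_i \colon V_i \times W_i \to \underrightarrow{\lim}_i (V_i \times W_i)$ les morphismes canoniques. Pour tout r\'eseau ouvert $L$ de $U$, on a $\beta^{-1}(L) = \sum_{i\in I} g_i(\beta_i^{-1}(L))$ : l'inclusion $\supset$ vient de $\beta \circ g_i = \beta_i$, et l'inclusion $\subset$ du fait que, le syst\`eme \'etant filtrant, tout \'el\'ement de la limite est dans l'image d'un $g_i$. Comme chaque $\beta_i^{-1}(L)$ est un r\'eseau ouvert de $V_i \times W_i$ par continuit\'e de $\beta_i$, le membre de droite est ouvert par d\'efinition de la topologie limite inductive. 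L'observation que vous manquez est que cette caract\'erisation des ouverts d'une limite inductive localement convexe ne requiert aucune $K$-lin\'earit\'e de $\beta$ : il suffit que $\beta^{-1}(L)$ soit un r\'eseau, ce que garantit la $K$-bilin\'earit\'e de $\beta$ via \ref{constr-hatbeta-carrecomm}.
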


\begin{proof}
Notons $g _i \colon 
 V _i \times W _i 
\to \underrightarrow{\lim} _i V _i \times W _i $
les applications $D$-balancées continues canoniques.
Soit $L$ un réseau ouvert de $U$.
Le lemme découle alors de l'égalité
$\beta ^{-1} (L) = 
\sum _{i\in I} g _i  (\beta _i ^{-1} (L)).$
\end{proof}

\begin{lemm}
Soient $(V _i ) _{i\in I}$ un système inductif filtrant de 
$\mathfrak{C} _{D, d}$
et
$W$ un objet de $\mathfrak{C} _{D, g}$.
\begin{enumerate}
\item Le morphisme canonique 
$\underrightarrow{\lim} _i \, (V _i \times W)
\to
 (\underrightarrow{\lim} _i \, V _i )\times W$
 est un homéomorphisme.
\item On dispose alors de l'isomorphisme canonique dans 
$\mathfrak{C}$ de la forme
$$ \underrightarrow{\lim} _i \, (V _i \otimes _{D} W)
\riso
 (\underrightarrow{\lim} _i \, V _i )\otimes _{D} W.$$
\end{enumerate}
\end{lemm}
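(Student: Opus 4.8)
Le plan est de d�montrer d'abord (1), puis d'en d�duire (2) par propri�t� universelle. Pour (1), je commence par observer que l'application canonique sous-jacente est un isomorphisme de $K$-espaces vectoriels : les limites inductives filtrantes commutent aux limites finies dans la cat�gorie des $K$-espaces vectoriels, d'o� $\underrightarrow{\lim} _i (V _i \times W) \riso (\underrightarrow{\lim} _i V _i) \times (\underrightarrow{\lim} _i W)$, et $\underrightarrow{\lim} _i W = W$ puisque le syst�me est constant et $I$ filtrant (donc connexe). Il reste ainsi � v�rifier que cette bijection est un hom�omorphisme, i.e. � comparer les deux topologies. La continuit� du morphisme canonique $\underrightarrow{\lim} _i (V _i \times W) \to (\underrightarrow{\lim} _i V _i) \times W$ est imm�diate : par propri�t� universelle de la limite inductive, il suffit de voir que chaque compos� $V _i \times W \to (\underrightarrow{\lim} _i V _i) \times W$ est continu, ce qui r�sulte de ce qu'il s'agit du produit du morphisme canonique $V _i \to \underrightarrow{\lim} _i V _i$ et de l'identit� de $W$.

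Le point d�licat, et c'est l� le c\oe ur de la preuve, est la continuit� du morphisme r�ciproque : autrement dit, tout $\V$-sous-module ouvert $\Omega$ de $C := \underrightarrow{\lim} _i (V _i \times W)$ doit contenir un voisinage produit $U \times N$. L'id�e est d'exploiter la structure additive du produit \emph{fini} (qui co�ncide ici avec le coproduit). Je consid�re les deux inclusions canoniques $\iota _V \colon \underrightarrow{\lim} _i V _i \to C$ et $\iota _W \colon W \to C$, d�duites respectivement de $V _i \hookrightarrow V _i \times W$, $v \mapsto (v,0)$, et de $W \hookrightarrow V _i \times W$, $w \mapsto (0,w)$ (ce dernier morphisme ne d�pend pas de $i$, car les morphismes de transition sont de la forme $f _{ij} \times \mathrm{id} _W$). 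La continuit� de $\iota _W$ est claire ; celle de $\iota _V$ r�sulte de la propri�t� universelle de $\underrightarrow{\lim} _i V _i$, exactement comme dans la preuve de \ref{LB-mNm}. Posant alors $U := \iota _V ^{-1} (\Omega)$ et $N := \iota _W ^{-1} (\Omega)$, qui sont des voisinages ouverts de z�ro respectivement dans $\underrightarrow{\lim} _i V _i$ et $W$, l'additivit� de $\Omega$ donne, pour tous $u \in U$ et $n \in N$, que $\iota _V (u) + \iota _W (n) \in \Omega$, �l�ment qui correspond � $(u,n)$ via l'isomorphisme sous-jacent ; d'o� $U \times N \subset \Omega$. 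Ceci prouve que $\Omega$ est ouvert pour la topologie produit et ach�ve (1).

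Pour (2), je proc�de par propri�t� universelle. D'apr�s \ref{def-prod-tens}, pour tout objet $U$ de $\mathfrak{C}$, les morphismes $(\underrightarrow{\lim} _i V _i) \otimes _D W \to U$ correspondent fonctoriellement aux applications $D$-balanc�es continues $(\underrightarrow{\lim} _i V _i) \times W \to U$, tandis que les morphismes $\underrightarrow{\lim} _i (V _i \otimes _D W) \to U$ correspondent aux familles compatibles d'applications $D$-balanc�es continues $V _i \times W \to U$. Gr�ce � l'hom�omorphisme (1) et au lemme \ref{limD-balance} (qui assure qu'une telle famille compatible se recolle en une application $D$-balanc�e continue sur la limite inductive, la r�ciproque par restriction �tant �vidente), ces deux ensembles sont canoniquement en bijection, et le lemme de Yoneda fournit l'isomorphisme cherch� dans $\mathfrak{C}$. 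De mani�re �quivalente et plus concr�te, on peut voir que les deux membres portent la m�me topologie finale associ�e aux applications bilin�aires canoniques issues de $\underrightarrow{\lim} _i (V _i \times W) \cong (\underrightarrow{\lim} _i V _i) \times W$ : la topologie projective est finale pour $\rho _{V,W}$, la topologie de limite inductive est finale pour les $V _i \otimes _D W \to \underrightarrow{\lim} _i (V _i \otimes _D W)$, et une topologie finale obtenue par composition de topologies finales est finale pour la compos�e ; l'isomorphisme sous-jacent de $K$-espaces vectoriels �tant le classique $\underrightarrow{\lim} _i (V _i \otimes _D W) \riso (\underrightarrow{\lim} _i V _i) \otimes _D W$, on conclut. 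La seule subtilit� r�elle de tout l'�nonc� reste donc la continuit� en sens inverse �tablie au deuxi�me paragraphe.
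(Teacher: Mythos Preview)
Your proof is correct and follows essentially the same route as the paper. For part~(1), both arguments reduce the reverse continuity to the decomposition $(x,y)=(x,0)+(0,y)$: the paper checks directly that a basic open $\sum_i f_i(L_i)\times M_i$ of the inductive-limit topology contains $(\sum_i f_i(L_i))\times M_{i_0}$, while you phrase the same idea via the continuous inclusions $\iota_V,\iota_W$ and pull back an arbitrary open $\V$-submodule $\Omega$; the content is identical. For part~(2), both proofs proceed by universal property and invoke Lemma~\ref{limD-balance} to pass from a compatible family of continuous $D$-balanced maps to a single continuous one on the limit.
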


\begin{proof}
Notons $V := \underrightarrow{\lim} _i \, V _i $
et
$f _i \colon V _i \to V$ les morphismes canoniques.
Traitons d'abord la première assertion. 
Par définition de la topologie sur la limite inductive, 
la bijection canonique
$\underrightarrow{\lim} _i \, (V _i \times W)
\to
V\times W$
est continue. 
Soient $(L _i ) _{i\in I}$ une famille de réseaux ouverts de $(V _i ) _{i\in I}$
et 
$(M _i ) _{i\in I}$ une famille de réseaux ouverts de $W$.
Il s'agit de vérifier que 
$\sum _{i \in I} f _i (L _i) \times M _i$
est un ouvert de  
$V\times W$.
Cela résulte immédiatement de l'inclusion
$\sum _{i \in I} f _i (L _i) \times M _i
\supset 
(\sum _{i \in I} f _i (L _i) ) \times M _{i _0}$
valable quelque que soit $i _0 \in I$
(en effet, on utilise la relation $(x, y) = (x, 0) + (0,y)$ pour 
$x \in \sum _{i \in I} f _i (L _i)$ et $y \in M _{i _0}$). 

Vérifions à présent la seconde assertion, i.e. le $K$-espace localement convexe
$V \otimes _{D} W$ vérifie la propriété universelle de la limite inductive dans $\mathfrak{C}$ 
du système 
$(V _i \otimes _{D} W ) _{i\in I}$: 
si on se donne une famille compatible de morphismes de 
$\mathfrak{C}$ de la forme 
$V _i \otimes _{D} W  \to U$, 
alors ils se factorisent de manière unique en un morphisme 
$K$-linéaire de la forme 
$V \otimes _{D} W \to U$.
Pour vérifier que celui-ci est continue, 
il faut et il suffit que le composé 
$V \times W \to V \otimes _{D} W \to U$ l'est. 
Or, comme toutes les applications $D$-balancées
$V _i \times W  \to U$ sont continus, 
d'après \ref{limD-balance},
il en est de même de l'application $D$-balancée
$\underrightarrow{\lim} _i \, (V _i \times W)\to U$.
On déduit alors de la première assertion du lemme que 
le morphisme canonique $V \times W \to U$ est continu. 
D'où le résultat. 
\end{proof}

\subsection{Complétions de produit tensoriel de modules localement convexes}

\begin{vide}
\label{defi-widehat-otimes}
Avec les notations de \ref{section1.2}, 
soient $V$ un objet de $\mathfrak{C} _{D, d}$
et $W$ un objet de $\mathfrak{C} _{D, g}$.
On note $V \widehat{\otimes} _{D} W$ le séparé complété de $V \otimes _{D} W$
et $i _{V,W}\colon V\otimes _{D} W \to V \widehat{\otimes} _{D} W$ le morphisme canonique. 
Il résulte de la propriété universelle du produit tensoriel et de celle du séparé complété la propriété universelle suivante: 
pour tout application $D$-balancée et continue de la forme
$\phi \colon V \times W \to U$ avec $U \in \mathfrak{D}$,
il existe alors un unique 
morphisme dans $\mathfrak{D}$
de la forme 
$\theta \colon V \widehat{\otimes} _{D} W \to U$ 
tel que $\theta \circ i _{V,W}\circ \rho _{V,W} = \phi$.

On obtient ainsi le bifoncteur canonique 
$$- \widehat{\otimes} _{D} -\colon \mathfrak{C} _{D, d} \times \mathfrak{C} _{D, g} \to \mathfrak{D}.$$

\end{vide}

\begin{defi}
\label{moduleloccvx}
Soit $D$ une $K$-algèbre telle que $K$ soit dans le centre de $D$. 
\begin{enumerate}
\item On dit que $D$ est une $K$-algèbre localement convexe, si $D$ est muni d'une topologie $K$-localement convexe
telle que la multiplication soit une application $K$-bilinéaire continue. 
Un morphisme de $K$-algèbres localement convexes est un morphisme de $K$-algèbres qui est continue pour les topologies respectives.
On dit que $D$ est un $K$-algèbre de Banach si $D$ est une $K$-algèbre localement convexe 
dont la topologie sous-jacente en fait un $K$-espace de Banach. 

\item Soit $D$ une $K$-algèbre localement convexe. Un $D$-module à gauche localement convexe est un $D$-module à gauche $M$ muni d'une topologie $K$-localement convexe telle
 que la loi externe structurale $D \times M \to M$ soit une application $K$-bilinéaire continue. 
Un $D$-module à gauche de Banach est un $D$-module à gauche localement convexe
dont la topologie sous-jacente en fait une $K$-espace de Banach. 
 Un morphisme de $D$-modules à gauche localement convexes (resp. de Banach) est un morphisme de 
 $D$-modules à gauche qui est aussi un morphisme de $K$-espaces localement convexe (resp. de Banach).
 
De même en remplaçant à gauche par à droite. 
\end{enumerate}

\end{defi}

\begin{lemm}
Soient $D$ une $K$-algèbre localement convexe, 
$\phi \colon M\to N$ un morphisme de $D$-modules à gauche (resp. à droite) localement convexes.
\begin{enumerate}
\item La structure de $K$-espace localement convexe séparé complet sur $\widehat{D}$ se prolonge en une structure canonique de 
 $K$-algèbre localement convexe. 
 De plus, le morphisme canonique 
$D \to \widehat{D}$ est un morphisme de $K$-algèbres localement convexes.
\item Le morphisme induit par séparée complétion $\widehat{\phi}\colon \widehat{M} \to  \widehat{N}$ est 
un morphisme de 
$\widehat{D}$-modules localement convexes.
De plus, le morphisme canonique 
$M \to \widehat{M}$ est un morphisme de $D$-modules localement convexes.
\end{enumerate}
\end{lemm}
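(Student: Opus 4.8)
The plan is to build both the algebra structure on $\widehat{D}$ and the $\widehat{D}$-module structure on $\widehat{M}$ out of the completion of continuous bilinear maps furnished by \ref{constr-hatbeta-carrecomm}, and then to promote every purely algebraic identity from $D$ (resp. $M$) to its completion by a density argument. The whole proof is therefore a sequence of applications of one construction followed by routine verifications; the only point that deserves attention is the density step, which I discuss last.

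For the first assertion I would start from the multiplication $\mu \colon D \times D \to D$, which is $K$-bilinear and continuous by hypothesis. I note that $\mu$ is $D$-balanced when $D$ is viewed as a right (resp. left) module over itself, since $\mu(da,b)=(da)b=d(ab)=\mu(d,ab)$ is exactly associativity; hence \ref{constr-hatbeta-carrecomm} applies and produces a continuous $K$-bilinear map $\widehat{\mu}\colon \widehat{D}\times\widehat{D}\to\widehat{D}$ fitting into the commutative square \eqref{hatbeta-carrecomm}. This $\widehat{\mu}$ is the candidate multiplication. To check associativity I would compare the two maps $\widehat{D}^{3}\to\widehat{D}$, namely $(a,b,c)\mapsto\widehat{\mu}(\widehat{\mu}(a,b),c)$ and $(a,b,c)\mapsto\widehat{\mu}(a,\widehat{\mu}(b,c))$: both are continuous because $\widehat{\mu}$ is jointly continuous, they agree on the image of $D^{3}$ by associativity of $\mu$, and that image is dense in $\widehat{D}^{3}$ since the image of $D$ is dense in $\widehat{D}$; as $\widehat{D}$ is separated, the two maps coincide. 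The unit and the distributivity laws are treated identically, so $\widehat{D}$ becomes a locally convex $K$-algebra, and the canonical map $D\to\widehat{D}$ is an algebra morphism precisely because the square \eqref{hatbeta-carrecomm} says it intertwines $\mu$ and $\widehat{\mu}$ (its continuity being built into the completion).

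For the second assertion I would run the same machine on the structural actions $\lambda_{M}\colon D\times M\to M$ and $\lambda_{N}\colon D\times N\to N$, which are $K$-bilinear, continuous and $D$-balanced (again by associativity of the action). Applying \ref{constr-hatbeta-carrecomm} yields continuous $K$-bilinear maps $\widehat{\lambda}_{M}\colon \widehat{D}\times\widehat{M}\to\widehat{M}$ and $\widehat{\lambda}_{N}\colon \widehat{D}\times\widehat{N}\to\widehat{N}$; the module axioms $\widehat{\lambda}_{M}(\widehat{\mu}(a,b),x)=\widehat{\lambda}_{M}(a,\widehat{\lambda}_{M}(b,x))$ and the unit law are verified as above, by comparing continuous maps that agree on the dense image of $D\times D\times M$ (resp. of $M$) inside a separated space. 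This makes $\widehat{M}$ a locally convex $\widehat{D}$-module, and commutativity of the corresponding completion square shows that $M\to\widehat{M}$ is $D$-linear once $\widehat{M}$ is regarded as a $D$-module through $D\to\widehat{D}$. For $\widehat{\phi}$, the $D$-linearity of $\phi$ reads $\phi\circ\lambda_{M}=\lambda_{N}\circ(\mathrm{id}\times\phi)$; passing to completions and invoking the uniqueness clause of \ref{constr-hatbeta-carrecomm} (or once more density, $\widehat{N}$ being separated) gives $\widehat{\phi}\circ\widehat{\lambda}_{M}=\widehat{\lambda}_{N}\circ(\mathrm{id}\times\widehat{\phi})$, i.e. $\widehat{\phi}$ is $\widehat{D}$-linear, its continuity being the usual functoriality of separated completion.

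The only real point requiring care is this passage from a dense subset to the whole completed space: each identity is formal on $D$ (resp. $M$), and the work consists in observing that both sides define continuous maps on the appropriate power of $\widehat{D}$ (resp. of $\widehat{D}$ and $\widehat{M}$), so that equality on the dense image forces equality everywhere. This is exactly what the joint continuity of $\widehat{\mu}$ and $\widehat{\lambda}_{M}$ coming from \ref{constr-hatbeta-carrecomm}, together with separatedness of the completions, provides. In particular I expect no genuine obstacle, and notably I never need the delicate $\widehat{D}$-balancedness of $\widehat{\mu}$ alluded to after \eqref{hatbeta-carrecomm}, since all the identities at stake are associativity-type relations rather than balancing conditions.
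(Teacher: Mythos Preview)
Your proposal is correct and follows essentially the same route as the paper: apply the completion construction \ref{constr-hatbeta-carrecomm} to the multiplication and action maps, then verify each algebra/module axiom by comparing two continuous maps into a separated target that agree on a dense image. The paper carries out exactly this density argument (explicitly for associativity and for the $\widehat{D}$-linearity of $\widehat{\phi}$, declaring the remaining verifications analogous). One cosmetic slip: your displayed balancing relation should read $\mu(ad,b)=(ad)b=a(db)=\mu(a,db)$ rather than $\mu(da,b)=(da)b=d(ab)=\mu(d,ab)$, but this does not affect the argument.
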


\begin{proof}
Contentons-nous de prouver le cas non respectif.
D'après \ref{hatbeta-carrecomm}, l'application $K$-bilinéaire continue structurale canonique
$\mu _{D} \colon D \times D \to D$
induit l'application $K$-bilinéaire continue 
$\mu _{\widehat{D}} := \widehat{\mu_D}\colon \widehat{D} \times  \widehat{D} \to\widehat{D} $
s'inscrivant (de manière unique) dans le diagramme commutatif:
\begin{equation}
\label{hatbeta-carrecommbis}
\xymatrix @R=0,3cm {
{\widehat{D} \times  \widehat{D}} 
\ar[r] ^-{\mu _{\widehat{D}} }
& 
{\widehat{D} } 
\\ 
{D \times D} 
\ar[r] ^-{\mu _D}
\ar[u] ^-{}
& 
{D.} 
\ar[u] ^-{}
}
\end{equation}
Comme les deux applications 
$\mu _{\widehat{D}}  \circ (\mu _{\widehat{D}}  \times id )
,\,
\mu _{\widehat{D}}  \circ (id \times \mu _{\widehat{D}} )
\colon 
\widehat{D} \times \widehat{D}  \times \widehat{D} \to \widehat{D}$
coïncident après composition par le morphisme canonique
$D \times D \times D \to \widehat{D} \times \widehat{D}  \times \widehat{D} $
dont l'image est dense, 
on obtient 
$\mu _{\widehat{D}}  \circ (\mu _{\widehat{D}}  \times id )
=
\mu _{\widehat{D}}  \circ (id \times \mu _{\widehat{D}} )$, 
i.e. la multiplication 
est associative. On vérifie de même les autres propriétés qui font de 
$\widehat{D} $ une $K$-algèbre localement convexe. 
Il est clair que le morphisme continu canonique 
$D \to \widehat{D}$ est alors un morphisme de $K$-algèbres localement convexes.

D'après \ref{constr-hatbeta-carrecomm}, 
les applications $K$-bilinéaires continues structurales canoniques
$\mu _M \colon D \times M \to M$ et
$\mu _N \colon D \times N \to N$ 
induisent les applications $K$-bilinéaires continues
$\mu _{\widehat{M}} := \widehat{\mu_M}\colon \widehat{D} \times  \widehat{M}   \to\widehat{M} $
et
$\mu _{\widehat{N}} := \widehat{\mu_N}\colon \widehat{D} \times  \widehat{N} \to\widehat{N} $.
De même, on vérifie que $\mu _{\widehat{M}} $
et
$\mu _{\widehat{N}} $
induisent respectivement une structure canonique 
de $\widehat{D}$-module localement convexes sur $\widehat{M}$ et $\widehat{N}$. 
Comme le diagramme
\begin{equation}
\notag
\xymatrix @R=0,3cm{
{\widehat{D} \times \widehat{M}} 
\ar[r] ^-{\mu _{\widehat{M}} }
\ar[d] ^-{id \times \widehat{\phi}}
& 
{ \widehat{M}} 
\ar[d] ^-{\widehat{\phi}}
\\ 
{\widehat{D} \times \widehat{N}} 
\ar[r] ^-{\mu _{\widehat{N}} }
& 
{ \widehat{N}} 
}
\end{equation}
est commutatif après composition par 
$D \times M \to \widehat{D} \times \widehat{M}$ dont l'image est dense, 
celui-ci est commutatif. D'où le résultat. 
\end{proof}

\begin{lemm}
\label{Dtopo-hatbeta-carrecomm}
Soient $D$ une $K$-algèbre localement convexe, 
$M$ un $D$-module à droite localement convexe,
$N$ un $D$-module à gauche localement convexe
et
$U$ un $K$-espace localement convexe.
Soit
$\beta \colon M \times N \to U$ une application $D$-balancée et continue.
L'application
$\widehat{\beta}\colon \widehat{M} \times  \widehat{N} \to  \widehat{U}$
(voir \ref{constr-hatbeta-carrecomm})
est alors une
application $\widehat{D}$-balancée et continue. 
\end{lemm}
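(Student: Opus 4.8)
La continuit\'e de $\widehat{\beta}$ est d\'ej\`a acquise par la construction de \ref{constr-hatbeta-carrecomm}; il ne reste donc qu'\`a \'etablir que $\widehat{\beta}$ est $\widehat{D}$-balanc\'ee. Notant $\mu _{\widehat{M}}$ et $\mu _{\widehat{N}}$ les lois externes compl\'et\'ees (dont l'existence et la continuit\'e sont fournies par le lemme pr\'ec\'edent), il s'agit de v\'erifier l'\'egalit\'e
\[
\widehat{\beta}\,(\mu _{\widehat{M}} (\widehat{m}, \widehat{d}), \widehat{n})
=
\widehat{\beta}\,(\widehat{m}, \mu _{\widehat{N}} (\widehat{d}, \widehat{n}))
\]
pour tous $\widehat{m}\in \widehat{M}$, $\widehat{d}\in \widehat{D}$, $\widehat{n}\in \widehat{N}$.

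Le plan est d'utiliser un argument de densit\'e analogue \`a celui d\'ej\`a employ\'e plus haut pour \'etablir l'associativit\'e de $\mu _{\widehat{D}}$. D'abord, je consid\`ere les deux applications
\[
\gamma _1 := \widehat{\beta} \circ (\mu _{\widehat{M}} \times \mathrm{id} _{\widehat{N}}),
\qquad
\gamma _2 := \widehat{\beta} \circ (\mathrm{id} _{\widehat{M}} \times \mu _{\widehat{N}}),
\]
toutes deux de $\widehat{M} \times \widehat{D} \times \widehat{N}$ vers $\widehat{U}$. Comme $\mu _{\widehat{M}}$, $\mu _{\widehat{N}}$ et $\widehat{\beta}$ sont continues, $\gamma _1$ et $\gamma _2$ le sont aussi. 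Ensuite, j'\'evalue $\gamma _1$ et $\gamma _2$ sur l'image de $M \times D \times N$ par le morphisme canonique $M \times D \times N \to \widehat{M} \times \widehat{D} \times \widehat{N}$: par commutativit\'e du carr\'e \eqref{hatbeta-carrecomm} (qui exprime que $\widehat{\beta}$ prolonge $\beta$) et par compatibilit\'e des lois externes compl\'et\'ees aux lois initiales (lemme pr\'ec\'edent), on obtient, pour $(m, d, n)\in M \times D \times N$, les \'egalit\'es $\gamma _1 (m, d, n) = \beta (m\cdot d, n)$ et $\gamma _2 (m, d, n) = \beta (m, d \cdot n)$ (vues dans $\widehat{U}$). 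Le caract\`ere $D$-balanc\'e de $\beta$ donne alors $\gamma _1 = \gamma _2$ sur cette image.

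Enfin, je conclus par s\'eparation et densit\'e: l'image de $M \times D \times N$ est dense dans $\widehat{M} \times \widehat{D} \times \widehat{N}$ (produit d'images denses, donc dense pour la topologie produit) et $\widehat{U}$ est s\'epar\'e, de sorte que deux applications continues \`a valeurs dans $\widehat{U}$ co\"{\i}ncidant sur une partie dense sont \'egales. On en d\'eduit $\gamma _1 = \gamma _2$ partout, c'est-\`a-dire que $\widehat{\beta}$ est $\widehat{D}$-balanc\'ee. Aucun obstacle s\'erieux n'appara\^it dans cette preuve; le seul point demandant un peu d'attention est de s'assurer que les lois externes compl\'et\'ees restreignent bien aux lois initiales sur l'image dense de $M \times D \times N$, ce qui est pr\'ecis\'ement garanti par la construction du lemme pr\'ec\'edent.
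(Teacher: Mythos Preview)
Your proof is correct and follows essentially the same approach as the paper: the paper expresses the argument as the commutativity of the square
\[
\xymatrix @R=0,3cm{
{ \widehat{M} \times  \widehat{D} \times  \widehat{N}}
\ar[r] ^-{\mu _{\widehat{M}} \times id }
\ar[d] ^-{id \times \mu _{\widehat{N}}}
&
{ \widehat{M} \times  \widehat{N}}
\ar[d] ^-{\widehat{\beta}}
\\
{ \widehat{M} \times  \widehat{N}}
\ar[r] ^-{\widehat{\beta}}
&
{\widehat{U} ,}
}
\]
deduced from the commutativity without hats and the density of the image of $M\times D\times N$, which is exactly your comparison $\gamma_1=\gamma_2$ by density and separation of $\widehat{U}$.
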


\begin{proof}
On sait déjà que l'application $\widehat{\beta}$ est continue. 
Considérons le carré :
\begin{equation}
\label{carre-comm-Dbal}
\xymatrix @R=0,3cm{
{ \widehat{M} \times  \widehat{D} \times  \widehat{N}} 
\ar[r] ^-{\mu _{\widehat{M}} \times id }
\ar[d] ^-{id \times \mu _{\widehat{N}}}
& 
{ \widehat{M} \times  \widehat{N}} 
\ar[d] ^-{\widehat{\beta}}
\\ 
{ \widehat{M} \times  \widehat{N}} 
\ar[r] ^-{\widehat{\beta}} 
& 
{\widehat{U} ,} 
}
\end{equation}
où $\mu _{\widehat{M}} \colon \widehat{M} \times \widehat{D} \to \widehat{M}$ et
$\mu _{\widehat{N}} \colon \widehat{D} \times \widehat{N} \to \widehat{N}$ sont les applications $K$-bilinéaires continues structurales canoniques.
Comme l'image de $ M \times  D \times N$
dans 
$ \widehat{M} \times  \widehat{D} \times  \widehat{N}$ est dense, le carré 
\ref{carre-comm-Dbal} est donc commutatif car il l'est sans les chapeaux. 
\end{proof}

\begin{prop}
\label{MwidehatotimesN}
Soient $D$ une $K$-algèbre localement convexe, 
$M$ un $D$-module à droite localement convexe
et 
$N$ un $D$-module à gauche localement convexe.
On dispose alors de l'isomorphisme canonique dans $\mathfrak{D}$ de la forme:
$$M \widehat{\otimes} _{D} N \riso 
\widehat{M} \widehat{\otimes} _{\widehat{D}} \widehat{N}.$$
\end{prop}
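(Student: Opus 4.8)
Le plan est de n'utiliser que les propri\'et\'es universelles des produits tensoriels compl\'et\'es rappel\'ees en \ref{defi-widehat-otimes}, jointes au lemme \ref{Dtopo-hatbeta-carrecomm} garantissant qu'une application $D$-balanc\'ee continue devient $\widehat{D}$-balanc\'ee apr\`es compl\'etion. On remarque d'embl\'ee que les deux membres $M \widehat{\otimes} _{D} N$ et $\widehat{M} \widehat{\otimes} _{\widehat{D}} \widehat{N}$ sont, par construction, des objets de $\mathfrak{D}$.

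Je construirais d'abord le morphisme de gauche \`a droite. L'application compos\'ee $M \times N \to \widehat{M} \times \widehat{N} \to \widehat{M} \widehat{\otimes} _{\widehat{D}} \widehat{N}$ est continue et, \'etant la restriction le long de $D \to \widehat{D}$ de l'application canonique $\widehat{D}$-balanc\'ee, elle est $D$-balanc\'ee; son but \'etant dans $\mathfrak{D}$, la propri\'et\'e universelle de \ref{defi-widehat-otimes} fournit un unique morphisme $\alpha \colon M \widehat{\otimes} _{D} N \to \widehat{M} \widehat{\otimes} _{\widehat{D}} \widehat{N}$ dans $\mathfrak{D}$.

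Pour le morphisme r\'eciproque, je partirais de l'application $D$-balanc\'ee continue $\beta := i _{M,N} \circ \rho _{M,N} \colon M \times N \to M \widehat{\otimes} _{D} N$. Son but \'etant d\'ej\`a s\'epar\'e complet, \ref{constr-hatbeta-carrecomm} et \ref{Dtopo-hatbeta-carrecomm} montrent que sa compl\'et\'ee $\widehat{\beta} \colon \widehat{M} \times \widehat{N} \to M \widehat{\otimes} _{D} N$ est une application $\widehat{D}$-balanc\'ee continue; la propri\'et\'e universelle de $\widehat{M} \widehat{\otimes} _{\widehat{D}} \widehat{N}$ produit alors un unique morphisme $\alpha' \colon \widehat{M} \widehat{\otimes} _{\widehat{D}} \widehat{N} \to M \widehat{\otimes} _{D} N$.

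Il resterait \`a prouver que $\alpha$ et $\alpha'$ sont inverses l'un de l'autre, ce qui est l'\'etape la plus d\'elicate tout en restant formelle. Pour $\alpha' \circ \alpha = \mathrm{id}$, je v\'erifierais que les deux membres co\"incident apr\`es composition par le morphisme canonique $M \times N \to M \widehat{\otimes} _{D} N$, en me servant de ce que $\widehat{\beta}$ prolonge $\beta$ (carr\'e \ref{hatbeta-carrecomm}), puis je conclurais par l'unicit\'e dans la propri\'et\'e universelle de $M \widehat{\otimes} _{D} N$. Pour $\alpha \circ \alpha' = \mathrm{id}$, les deux membres sont des morphismes continus de m\^eme but $\widehat{M} \widehat{\otimes} _{\widehat{D}} \widehat{N}$, qui est s\'epar\'e; comme ils co\"incident sur l'image de $M \times N$, dense dans $\widehat{M} \times \widehat{N}$, ils sont \'egaux. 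L'essentiel du travail non formel ayant d\'ej\`a \'et\'e accompli en amont (le caract\`ere $\widehat{D}$-balanc\'e de $\widehat{\beta}$, via \ref{Dtopo-hatbeta-carrecomm}), l'essentiel de la difficult\'e se r\'eduit ici \`a la gestion soigneuse des densit\'es et des unicit\'es de factorisation.
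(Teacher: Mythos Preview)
Your proposal is correct and follows essentially the same approach as the paper: both construct the inverse morphism by applying lemma~\ref{Dtopo-hatbeta-carrecomm} to the canonical $D$-balanced map $M\times N \to M \widehat{\otimes}_D N$ (or, equivalently in the paper, to $M\times N \to M \otimes_D N$) so as to obtain a $\widehat{D}$-balanced continuous map $\widehat{M}\times\widehat{N}\to M \widehat{\otimes}_D N$, and then invoke the universal property. Your write-up is slightly more explicit than the paper's in spelling out the forward map and the verification that the two morphisms are mutually inverse via density and uniqueness, but there is no substantive difference.
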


\begin{proof}
Par fonctorialité du foncteur séparée complétion, 
on dispose du morphisme dans $\mathfrak{D}$ de la forme:
$M \widehat{\otimes} _{D} N \to 
\widehat{M} \widehat{\otimes} _{\widehat{D}} \widehat{N}.$
Pour construire le morphisme quasi-inverse, par propriété universelle du produit tensoriel, 
il s'agit de définir canoniquement une application continue
de la forme 
$\widehat{M} \times  \widehat{N} \to M \widehat{\otimes} _{D} N $ qui soit $\widehat{D}$-balancée, 
ce qui résulte aussitôt du lemme \ref{Dtopo-hatbeta-carrecomm} appliqué à $\beta$ égal à 
l'application canonique $M \times N \to M \otimes _{D} N $.
\end{proof}

\section{Espaces de type $LB$ en théorie des $D$-modules arithmétiques}

\label{nota-sect2}
Sauf mention explicite du contraire, nous utiliserons les notations et hypothèses suivantes:
soit $\X$ un $\V$-schéma formel affine, lisse, muni de coordonnées locales 
$t _1,\dots, t _d$ et soient $\partial _1, \dots, \partial _d$ les dérivations correspondantes.  
On note $\ZZ := \cap _{i=1} ^{e} V (t _i)$ et 
$u\colon \ZZ  \hookrightarrow \X$ l'immersion fermée induite. 
Soit $\mathfrak{D}= V (t _{e+1} \cdots t _{f})$
un diviseur à croisements normaux strict de $\mathfrak{X}$ tel que 
$u ^{-1} (\mathfrak{D})$ soit un diviseur à croisements normaux strict de
$\mathcal{Z}$. 
On pose $\mathfrak{X} ^{\sharp}:= (\mathfrak{X}, \mathfrak{D})$,
$\mathcal{Z} ^{\sharp}:= (\mathcal{Z}, u ^{-1}\mathfrak{D})$
et $u ^{\sharp}\colon \mathcal{Z} ^{\sharp} \hookrightarrow \mathfrak{X} ^{\sharp}$
l'immersion fermée exacte de schémas formels logarithmiques lisses sur $\V$.
Soient $f \in O _{\X}$ et $f _{0} \in O _X$ sa réduction modulo $\pi$  
telle que $T:= V(f _0)$ soit un diviseur de $X$ tel que $U:= T \cap Z$ soit un diviseur de $Z$.
Soit $\lambda _0 \colon \N\to \N$ une application croissante telle que $\lambda _0 (m) \geq m$, pour tout $m \in \N$.
Pour alléger les notations, 
on pose alors 
$\widetilde{\B} ^{(m)} _{\X} (T):= \widehat{\B} ^{(\lambda _0 (m))} _{\X} (T)$, 
$\widetilde{\B} ^{(m)} _{\ZZ} (U):= \widehat{\B} ^{(\lambda _0 (m))} _{\ZZ} (U)$, 
$\widetilde{\D} ^{(m)} _{\X ^{\sharp}}:= \widetilde{\B} ^{(m)} _{\X} (T) \widehat{\otimes} _{\O _{\X}} \widehat{\D} ^{(m)} _{\X ^{\sharp}}$,
$\widetilde{\D} ^{(m)} _{\ZZ^{\sharp}}:= \widetilde{\B} ^{(m)} _{\ZZ} (U) \widehat{\otimes} _{\O _{\ZZ}} \widehat{\D} ^{(m)} _{\ZZ ^{\sharp}}$,
$\widetilde{\D} ^{(m)}  _{\X ^{\sharp} \leftarrow \ZZ ^{\sharp}}:=
  \widehat{\D} ^{(m)}  _{\X^{\sharp} \leftarrow \ZZ ^{\sharp}}  \widehat{\otimes} _{\O _{\ZZ}}  \widetilde{\B} ^{(m)} _{\ZZ} (U) $,
 $\widetilde{\D} ^{(m)}  _{\ZZ ^{\sharp} \rightarrow \X ^{\sharp}}:=
 \widetilde{\B} ^{(m)} _{\ZZ} (U) \widehat{\otimes} _{\O _{\ZZ}}  \widehat{\D} ^{(m)}  _{\ZZ ^{\sharp} \rightarrow \X^{\sharp}}$,
 $\widetilde{\D} ^\dag _{\ZZ ^{\sharp}\to \X^{\sharp},\Q} = \underrightarrow{\lim} _m \widetilde{\D} ^{(m)} _{\ZZ ^{\sharp}\to \X ^{\sharp},\Q}$
et
$\widetilde{\D} ^\dag _{\X ^{\sharp} \leftarrow \ZZ ^{\sharp},\Q} 
= \underrightarrow{\lim} _m \widetilde{\D} ^{(m)} _{\X ^{\sharp} \leftarrow \ZZ ^{\sharp},\Q}$.

\subsection{Topologies canoniques}

\begin{vide}
[Topologie de $O _{\X,\Q}$-algèbre localement convexe canonique 
(resp.  de $O _{\X,\Q}$-algèbre normée)
sur $O  _{\X} (\hdag T) _{\Q}$]
On définit les topologies suivantes:
\label{topo-cano-Odag}
\begin{itemize}
\item Pour tout entier $m$, la topologie canonique de $O _{\X,\Q}$-algèbre localement convexe sur 
$\widetilde{B} ^{(m)} _{\X} (T) _{\Q}$
est celle dont une base de voisinages de zéro est 
$(p ^{n} \widetilde{B} ^{(m )} _{\X} (T)) _{n \in \N}$.

\item 
On munit $O  _{\X} (\hdag T) _{\Q}$ d'une topologie canonique de $O _{\X,\Q}$-algèbre localement convexe 
de telle manière que 
l'isomorphisme canonique
$O  _{\X} (\hdag T) _{\Q} \riso\underrightarrow{\lim} _m \,
\widetilde{B} ^{(m)} _{\X} (T) _{\Q}$
soit un homéomorphisme (on remarque que d'après \ref{LB-mNm}, cela ne dépend pas de l'application $\lambda _0$).

\item Comme 
$\cap _{n \in \N}p ^{n} O _{\X} [\frac{1}{f}] ^{\dag}= \{0\}$,
comme on dispose de l'isomorphisme canonique
$O  _{\X} (\hdag T) _{\Q} \riso O _{\X} [\frac{1}{f}] ^{\dag} _K$ (voir la preuve de \cite[4.3.2]{Be1}),
on munit alors $O  _{\X} (\hdag T) _{\Q}$ d'une structure de $O _{\X,\Q}$-algèbre normée 
(à ne pas confondre avec la topologie canonique induite par la limite inductive)
dont une base de voisinages de zéro est donnée par 
$(p ^{n} O _{\X} [\frac{1}{f}] ^{\dag} ) _{n\in \N}$.
Muni de cette topologie, $O  _{\X} (\hdag T) _{\Q}$ est normée mais pas de Banach:
son séparé complété est
$O _{\X} \{\frac{1}{f}\} _K$.

\item Comme nous préférons travailler avec des 
$K$-espaces de type $LB$ (voir le lemme \ref{topo-cano-riche}), 
nous prenons par défaut la topologie canonique de $O _{\X,\Q}$-algèbre localement convexe sur 
$O  _{\X} (\hdag T) _{\Q}$ (définie dans le deuxième point).
\end{itemize}
\end{vide}

\begin{vide}
[Faisceaux des opérateurs différentiels]
\label{defi-topo-can}
\begin{itemize}
\item Pour tout entier $m \geq 0$,
on munit $\widetilde{D} ^{(m)}  _{\X ^{\sharp},\Q}$ de sa topologie canonique de $\widetilde{B} ^{(m)} _{\X} (T) _{\Q}$-module de Banach, i.e. 
les $p ^{n}\widetilde{D} ^{(m)}  _{\X ^{\sharp}}$ avec $n\in \N$ forment une base de voisinages de zéro. 
On dispose de même d'une structure canonique de $\widetilde{B} ^{(m)} _{\ZZ} (U) _{\Q}$-module de Banach sur respectivement
$\widetilde{D} ^{(m)}  _{\ZZ ^{\sharp},\Q}$, 
$\widetilde{D} ^{(m)}  _{\ZZ ^{\sharp} \to \X ^{\sharp},\Q}$
et 
$\widetilde{D} ^{(m)}  _{\X ^{\sharp} \leftarrow \ZZ  ^{\sharp},\Q}$
via la base de voisinages d'ouverts de zéro
$(p ^{n}\widetilde{D} ^{(m)}  _{\ZZ ^{\sharp}} ) _{n \in \N}$,
$(p ^{n}\widetilde{D} ^{(m)}  _{\ZZ ^{\sharp} \to \X ^{\sharp}} ) _{n \in \N}$
et
$(p ^{n}\widetilde{D} ^{(m)}  _{\X ^{\sharp} \leftarrow \ZZ ^{\sharp}} ) _{n \in \N}$.

\item La topologie canonique sur la $O _{\X,\Q}$-algèbre $\smash{D} ^\dag _{\X  ^{\sharp}} (\hdag T) _{\Q}$ est la topologie limite inductive dans $\mathfrak{C}$
via $\smash{D} ^\dag _{\X  ^{\sharp}} (\hdag T) _{\Q} = \underrightarrow{\lim} _m \widetilde{D} ^{(m)}  _{\X ^{\sharp},\Q}$, où
les $\widetilde{D} ^{(m)}  _{\X ^{\sharp},\Q}$ sont munis de la topologie définie ci-dessus. 
Une base de voisinages de zéro de $\smash{D} ^\dag _{\X  ^{\sharp}} (\hdag T) _{\Q}$
est donc donnée par la famille
$\sum _{m=0} ^{\infty} p ^{n _m}  \widetilde{D} ^{(m)}  _{\X ^{\sharp}}$, 
où $(n _{m}) _{m \in \N}$ parcourt les suites d'entiers positifs
et où l'on a noté
$\sum _{m=0} ^{\infty} p ^{n _m}  \widetilde{D} ^{(m)}  _{\X ^{\sharp}}
:= 
\underset{n \in \N}{\underrightarrow{\lim}}
\sum _{m=0} ^{n} p ^{n _m}  \widetilde{D} ^{(m)}  _{\X ^{\sharp}}$.

\item De même, on définit
sur 
$\smash{D} ^\dag _{\ZZ ^{\sharp}} (\hdag U) _{\Q} = \underrightarrow{\lim} _m \widetilde{D} ^{(m)}  _{\ZZ ^{\sharp},\Q}$,
$\widetilde{D} ^\dag  _{\ZZ ^{\sharp} \to \X ^{\sharp},\Q} = \underrightarrow{\lim} _m \widetilde{D} ^{(m)}  _{\ZZ ^{\sharp}\to \X ^{\sharp},\Q}$
et
$\widetilde{D} ^\dag _{\X ^{\sharp} \leftarrow \ZZ ^{\sharp},\Q} = \underrightarrow{\lim} _m \widetilde{D} ^{(m)}  _{\X ^{\sharp} \leftarrow \ZZ ^{\sharp},\Q}$
une topologie canonique de $O _{\ZZ, \Q}$-module localement convexe.
On remarque que d'après le lemme \ref{LB-mNm} on peut remplacer l'indice $\N$ par un sous-ensemble cofinal
sans changer la topologie. 
\end{itemize}

\end{vide}

\begin{vide}
[Topologie (de $O _{\X,\Q}$-module localement convexe) canonique d'un $\smash{D} ^\dag _{\X  ^{\sharp}} (\hdag T) _{\Q}$-module cohérent]
\label{topo-can-D-coh}
Soit $E$ est un $\smash{D} ^\dag _{\X  ^{\sharp}} (\hdag T) _{\Q}$-module cohérent. 
D'après \cite[3.6]{Be1}, 
il existe pour $m _0$ assez grand un $\widetilde{D} ^{(m _0)} _{\X ^{\sharp},\Q}$-module de type fini
$E ^{(m_0)}$
et un isomorphisme 
$ \smash{D} ^\dag _{\X  ^{\sharp}} (\hdag T) _{\Q}$-linéaire de la forme
 $\epsilon \colon \smash{D} ^\dag _{\X  ^{\sharp}} (\hdag T) _{\Q} \otimes _{\widetilde{D} ^{(m _0)} _{\X ^{\sharp},\Q} } E ^{(m _0)}
 \riso E$.
Pour tout entier $m \geq m _0$, posons 
$E ^{(m)}:= \widetilde{D} ^{(m)}  _{\X ^{\sharp},\Q}  \otimes _{\widetilde{D} ^{(m_0)} _{\X ^{\sharp},\Q} }E ^{(m _0)}$.
On munit $E ^{(m)}$ de la topologie canonique qui en fait un $\widetilde{D} ^{(m)}  _{\X ^{\sharp},\Q}$-module de type fini de Banach (égale à la topologie
quotient via un épimorphisme de la forme $(\widetilde{D} ^{(m)}  _{\X ^{\sharp},\Q} ) ^{r} \to E ^{(m)}$).
Comme pour tout $m$ les morphismes canoniques de la forme 
$(\widetilde{D} ^{(m)}  _{\X ^{\sharp},\Q} ) ^r \to 
(\widetilde{D} ^{(m+1)} _{\X ^{\sharp},\Q} ) ^r$ sont continus,
on en déduit que le morphisme canonique
$E ^{(m)}\to E ^{(m+1)}$ est continu.
On munit $E$ d'une topologie de $\smash{D} ^\dag _{\X  ^{\sharp}} (\hdag T) _{\Q}$-module cohérent localement convexe qui fait 
de
$\epsilon \colon \underrightarrow{\lim} _m E ^{(m)}  \riso E$
un homéomorphisme. 
Cela ne dépend pas du choix de $(m _0, E ^{(m_0)}, \epsilon) $.
En effet, soit pour $m '_0$ assez grand un $\widetilde{D} ^{(m '_0)} _{\X ^{\sharp},\Q}$-module de type fini
$E ^{\prime (m '_0)}$
et un isomorphisme 
$ \smash{D} ^\dag _{\X  ^{\sharp}} (\hdag T) _{\Q}$-linéaire de la forme
 $\epsilon ' \colon \smash{D} ^\dag _{\X  ^{\sharp}} (\hdag T) _{\Q} \otimes _{\widetilde{D} ^{(m _0)} _{\X ^{\sharp},\Q} } E ^{\prime (m '_0)}
 \riso E$. 
 Pour $m \geq m ' _0$, posons
 $E ^{\prime (m)}:= \widetilde{D} ^{(m)}  _{\X ^{\sharp},\Q}  \otimes _{\widetilde{D} ^{(m' _0)} _{\X ^{\sharp},\Q} }E ^{\prime  (m ' _0)}$. 
D'après \cite[3.6.2]{Be1}, pour $m'' _0$ assez grand, il existe pour pour tout $m \geq m '' _0$ des isomorphismes $\widetilde{D} ^{(m)}  _{\X ^{\sharp},\Q} $-linéaires
$\epsilon _{m}\colon E ^{\prime (m )} \riso E ^{(m)}$
tels que $\epsilon \circ \epsilon ^{\dag} = \epsilon '$, avec 
$\epsilon ^{\dag}:= \underrightarrow{\lim} _m\, \epsilon _m$.
Or, d'après \cite[3.7.3]{bosch} (en effet, on remarque que la commutativité des anneaux est superflue),
les $\epsilon _m$ sont des homéomorphismes. Par passage à la limite,
on en déduit que $\epsilon ^{\dag}$ est un homéomorphisme.
D'où la canonicité de la topologie (on utilise aussi \ref{LB-mNm}).
\end{vide}

\begin{vide}
[Topologie canonique d'un $O  _{\X} (\hdag T) _{\Q}$-module de type fini]
\label{topo-can-O-coh}
Soit $E$ est un $O  _{\X} (\hdag T) _{\Q}$-module de type fini. 
D'après \cite[3.6]{Be1}, 
il existe, pour $m _0$ assez grand, 
un $\widetilde{B} ^{(m _0)} _{\X} (T) _{\Q}$-module de type fini
$E ^{(m_0)}$ 
et un isomorphisme $O  _{\X} (\hdag T) _{\Q}$-linéaire de la forme
 $\epsilon \colon O  _{\X} (\hdag T) _{\Q} \otimes _{\widetilde{B} ^{(m _0)}_{\X} (T) _{\Q} } E ^{(m _0)} \riso   E$.
Pour tout entier $m \geq m _0$, posons 
$E ^{(m)}:= \widetilde{B} ^{(m)} _{\X} (T) _{\Q}  \otimes _{\widetilde{B} ^{(m_0)} _{\X} (T) _{\Q} }E ^{(m _0)}$.
On munit $E ^{(m)}$ de la topologie canonique qui en fait un $\widetilde{B} ^{(m)} _{\X} (T) _{\Q}$-module de type fini de Banach (égale à la topologie
quotient via un épimorphisme de la forme $(\widetilde{B} ^{(m)} _{\X} (T) _{\Q} ) ^{r} \to E ^{(m)}$).
Comme, pour tout $m$, les morphismes canoniques de la forme 
$(\widetilde{B} ^{(m)} _{\X} (T) _{\Q} )^r \to 
(\widetilde{B} ^{(m+1)} _{\X} (T) _{\Q} )^r$ sont continus,
on en déduit que le morphisme canonique
$E ^{(m)}\to E ^{(m+1)}$ est continu.
On munit $E$ d'une topologie de $O  _{\X} (\hdag T) _{\Q}$-module de type fini localement convexe qui fait 
de
$\epsilon \colon \underrightarrow{\lim} _m E ^{(m)}  \riso E$
un homéomorphisme. 
De manière analogue à \ref{topo-can-D-coh}, 
on vérifie que cela ne dépend pas du choix de $(m _0, E ^{(m_0)}, \epsilon) $.
\end{vide}

\subsection{Cas des isocristaux surconvergents}

Rappelons les conventions suivantes:
\begin{defi}
 Un isocristal sur $\X ^{\sharp}$ surconvergent le long de $T$ est un 
$\smash{\D} ^\dag _{\X ^{\sharp}} (\hdag T) _{\Q}$-module cohérent qui soit
aussi un $\O _{\X} (\hdag T) _{\Q}$-module localement projectif de type fini.
\end{defi}

\begin{lemm}
[Topologie canonique d'un log-isocristal surconvergent]
\label{lemm-top-can-isoc}
Soit $E$ est un $D ^{\dag}  _{\X ^{\sharp}} (\hdag T) _{\Q}$-module cohérent 
qui est aussi un
$O  _{\X} (\hdag T) _{\Q}$-module projectif de type fini pour la structure induite. 
La topologie sur $E$ en tant que $D ^{\dag}  _{\X ^{\sharp}} (\hdag T) _{\Q}$-module cohérent 
est la même que celle en tant que 
$O  _{\X} (\hdag T) _{\Q}$-module de type fini.
On l'appellera donc topologie canonique sur l'isocristal surconvergent $E$.
\end{lemm}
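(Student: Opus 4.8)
Le plan est de montrer que les deux topologies sur $E$ proviennent en réalité du \emph{même} système inductif filtrant de $\widetilde{B} ^{(m)} _{\X} (T) _{\Q}$-modules de Banach, après quoi l'égalité s'obtient par passage à la limite inductive dans $\mathfrak{C}$. Le c\oe ur de la preuve est de choisir un unique modèle de niveau fini $E ^{(m _0)}$ servant à la fois à la structure de $D ^{\dag} _{\X ^{\sharp}} (\hdag T) _{\Q}$-module cohérent (topologie de \ref{topo-can-D-coh}) et à la structure de $O _{\X} (\hdag T) _{\Q}$-module de type fini (topologie de \ref{topo-can-O-coh}), ce qui est licite puisque ces deux topologies ne dépendent pas du modèle choisi (voir \ref{topo-can-D-coh} et \ref{topo-can-O-coh}).

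D'abord, j'utiliserais le fait structurel, issu de la théorie de Berthelot (\cite[3.6]{Be1}), qu'un tel isocristal surconvergent provient d'un niveau fini de manière compatible aux deux structures: pour $m _0$ assez grand, il existe un $\widetilde{D} ^{(m _0)} _{\X ^{\sharp},\Q}$-module $E ^{(m _0)}$ qui est de plus $\widetilde{B} ^{(m _0)} _{\X} (T) _{\Q}$-projectif de type fini, et des isomorphismes $D ^{\dag} _{\X ^{\sharp}} (\hdag T) _{\Q} \otimes _{\widetilde{D} ^{(m _0)} _{\X ^{\sharp},\Q}} E ^{(m _0)} \riso E$ et $O _{\X} (\hdag T) _{\Q} \otimes _{\widetilde{B} ^{(m _0)} _{\X} (T) _{\Q}} E ^{(m _0)} \riso E$ compatibles. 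Autrement dit, le lattice $\widetilde{D} ^{(m _0)}$-stable fourni par la structure de $D ^{\dag}$-module cohérent est lui-même $O$-cohérent, et c'est exactement l'hypothèse que $E$ est à la fois $D ^{\dag}$-cohérent et $O _{\X} (\hdag T) _{\Q}$-projectif de type fini qui le garantit.

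Ensuite, je comparerais les pièces de niveau $m \geq m _0$. Du côté $D$-module on a $E ^{(m)} = \widetilde{D} ^{(m)} _{\X ^{\sharp},\Q} \otimes _{\widetilde{D} ^{(m _0)} _{\X ^{\sharp},\Q}} E ^{(m _0)}$ et du côté $O$-module $E ^{\prime (m)} = \widetilde{B} ^{(m)} _{\X} (T) _{\Q} \otimes _{\widetilde{B} ^{(m _0)} _{\X} (T) _{\Q}} E ^{(m _0)}$. Comme $E ^{(m _0)}$ est $O$-cohérent, l'élévation de niveau de la connexion ne change pas le $O$-module sous-jacent: le $\widetilde{B} ^{(m)} _{\X} (T) _{\Q}$-module sous-jacent à $E ^{(m)}$ est précisément $E ^{\prime (m)}$, qui est donc $\widetilde{B} ^{(m)} _{\X} (T) _{\Q}$-projectif de type fini. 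Il reste à identifier les deux topologies sur cet espace commun: la topologie canonique de $\widetilde{D} ^{(m)} _{\X ^{\sharp},\Q}$-module de type fini et celle de $\widetilde{B} ^{(m)} _{\X} (T) _{\Q}$-module de type fini sont toutes deux des topologies de $K$-espace de Banach faisant de $E ^{(m)}$ un module topologique de type fini sur une $K$-algèbre de Banach noethérienne; comme $\widetilde{B} ^{(m)} _{\X} (T) _{\Q} \to \widetilde{D} ^{(m)} _{\X ^{\sharp},\Q}$ est continu, la première en fait aussi un $\widetilde{B} ^{(m)} _{\X} (T) _{\Q}$-module de Banach de type fini, et l'unicité de la topologie de module de Banach de type fini \cite[3.7.3]{bosch} (où la commutativité des anneaux est superflue) force leur coïncidence. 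On obtient ainsi un isomorphisme de systèmes inductifs $(E ^{(m)}) _m \riso (E ^{\prime (m)}) _m$ par homéomorphismes compatibles aux morphismes de transition; par fonctorialité de la limite inductive dans $\mathfrak{C}$ (et en tenant compte de \ref{LB-mNm} pour l'indépendance vis-à-vis du réindexage), il en résulte l'égalité des deux topologies sur $E$.

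La difficulté principale n'est pas topologique: une fois le modèle commun $E ^{(m _0)}$ en main, tout se règle niveau par niveau via \cite[3.7.3]{bosch}. Elle réside dans l'étape structurelle du deuxième paragraphe, à savoir justifier que l'isocristal provient d'un niveau fini avec un modèle $\widetilde{D} ^{(m _0)} _{\X ^{\sharp},\Q}$-module qui soit simultanément $\widetilde{B} ^{(m _0)} _{\X} (T) _{\Q}$-projectif de type fini, et que l'élévation de niveau en tant que $\widetilde{D}$-module préserve le $O$-module sous-jacent (i.e. coïncide avec l'élévation de niveau en tant que $\widetilde{B}$-module). C'est précisément à cet endroit qu'intervient de façon essentielle la double hypothèse de cohérence sur $E$.
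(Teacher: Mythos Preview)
Your approach is essentially the paper's: find a common finite-level model, compare the two Banach topologies at each level, and identify the two level-raising functors. The paper sharpens the structural input you correctly flag as the main difficulty: rather than the general \cite[3.6]{Be1}, it invokes \cite[4.4.5, 4.4.7]{Be1} to produce (possibly after increasing $\lambda_0$) a model $E^{(m_0)}$ that is \emph{topologiquement nilpotent}, and it is precisely this condition that makes the level-raising comparison $\widetilde{B}^{(m)}_{\X}(T)_{\Q} \otimes_{\widetilde{B}^{(m_0)}_{\X}(T)_{\Q}} E^{(m_0)} \riso \widetilde{D}^{(m)}_{\X^{\sharp},\Q} \otimes_{\widetilde{D}^{(m_0)}_{\X^{\sharp},\Q}} E^{(m_0)}$ go through, via \cite[4.4.11]{Be1}. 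For the coincidence of the two Banach topologies at a fixed level the paper cites \cite[4.1.2]{Be1}; your appeal to \cite[3.7.3]{bosch} is an acceptable alternative.
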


\begin{proof}
De manière analogue à \cite[4.4.5]{Be1} (voir aussi \cite[4.4.7]{Be1}), 
quitte à augmenter $\lambda _0$, 
il existe pour $m _0$ assez grand un $\widetilde{D} ^{(m _0)} _{\X ^{\sharp},\Q}$-module de type fini topologiquement nilpotent
$E ^{(m_0)}$ qui est pour la structure induite un 
$\widetilde{B} ^{(m _0)} _{\X} (T) _{\Q}$-module de type fini 
et 
tel que $E \riso  D ^{\dag}  _{\X ^{\sharp}} (\hdag T) _{\Q}\otimes _{\widetilde{D} ^{(m _0)} _{\X ^{\sharp},\Q} } E ^{(m _0)}$.
Or, d'après \cite[4.1.2]{Be1}, 
la topologie canonique de $E ^{(m_0)}$
en tant que $\widetilde{B} ^{(m _0)} _{\X} (T) _{\Q}$-module de type fini
est la même que celle en tant que $\widetilde{D} ^{(m _0)} _{\X ^{\sharp},\Q}$-module de type fini.
De manière analogue à \cite[4.4.11]{Be1}, 
on vérifie que le morphisme canonique
$\widetilde{B} ^{(m)}_{\X} (T) _{\Q} \otimes _{\widetilde{B} ^{(m _0)}_{\X} (T) _{\Q} } E ^{(m _0)}
\to 
\widetilde{D} ^{(m)}  _{\X ^{\sharp},\Q}  \otimes _{\widetilde{D} ^{(m_0)} _{\X ^{\sharp},\Q} }E ^{(m _0)}$
est un isomorphisme.
D'où le résultat.
\end{proof}

\begin{lemm}
\label{cont-Dcoh}
Soit $\phi\colon E \to E'$ un morphisme de $\smash{D} ^\dag _{\X  ^{\sharp}} (\hdag T) _{\Q}$-modules cohérents 
(resp. de $O  _{\X} (\hdag T) _{\Q}$-modules de type fini). 
Avec $E$ et $E'$ munis des topologies canoniques (voir respectivement \ref{topo-can-D-coh} et \ref{topo-can-O-coh}), 
le morphisme $\phi$ est continu. 
\end{lemm}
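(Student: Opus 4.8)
Le plan est de ramener la continuit\'e de $\phi$ \`a celle de morphismes au niveau fini, qui est automatique, puis de passer \`a la limite inductive. Je commencerais par fixer, comme en \ref{topo-can-D-coh} (resp. \ref{topo-can-O-coh}), des pr\'esentations $(m_0, E^{(m_0)}, \epsilon)$ de $E$ et $(m_0, E'^{(m_0)}, \epsilon')$ de $E'$ avec un m\^eme entier $m_0$, ce qui est loisible en augmentant $m_0$ puisque la topologie canonique ne d\'epend pas de ce choix. On dispose alors des syst\`emes inductifs $(E^{(m)})_{m\geq m_0}$ et $(E'^{(m)})_{m\geq m_0}$ de modules de type fini de Banach sur les $\widetilde{D}^{(m)}_{\X^\sharp,\Q}$ (resp. sur les $\widetilde{B}^{(m)}_\X(T)_\Q$), dont les limites inductives s'identifient hom\'eomorphiquement \`a $E$ et \`a $E'$.

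L'\'etape cl\'e consiste \`a v\'erifier que $\phi$ provient d'un morphisme de ces syst\`emes inductifs. D'apr\`es \cite[3.6.2]{Be1}, qui exprime que tout morphisme entre modules coh\'erents provient, quitte \`a augmenter le niveau, d'un morphisme au niveau fini, il existe donc, quitte \`a augmenter $m_0$, pour tout $m\geq m_0$ un morphisme $\widetilde{D}^{(m)}_{\X^\sharp,\Q}$-lin\'eaire $\phi^{(m)}\colon E^{(m)}\to E'^{(m)}$, compatible aux morphismes de changement de niveau, tel que $\underrightarrow{\lim}_m \phi^{(m)} = \phi$ via les isomorphismes $\epsilon$ et $\epsilon'$. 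C'est ce point de descente au niveau fini des morphismes que je m'attends \`a \^etre le c\oe ur de l'argument, tout le reste \'etant formel.

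Il resterait alors \`a observer que chaque $\phi^{(m)}$, \'etant un morphisme de modules de type fini sur une $K$-alg\`ebre de Banach, est automatiquement continu d'apr\`es \cite[3.7.3]{bosch} (la commutativit\'e y \'etant superflue, comme d\'ej\`a not\'e en \ref{topo-can-D-coh}). Enfin, en notant $\iota_m\colon E^{(m)}\to E$ et $\iota'_m\colon E'^{(m)}\to E'$ les morphismes canoniques, qui sont continus par construction, on aurait $\phi\circ\iota_m = \iota'_m\circ\phi^{(m)}$ pour tout $m$. Comme $E$ porte la topologie limite inductive des $E^{(m)}$, la propri\'et\'e universelle de celle-ci fournirait la continuit\'e de $\phi$. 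Le cas des $O_\X(\hdag T)_\Q$-modules de type fini se traiterait de fa\c{c}on identique, en rempla\c{c}ant $\widetilde{D}^{(m)}_{\X^\sharp,\Q}$ par $\widetilde{B}^{(m)}_\X(T)_\Q$ et \ref{topo-can-D-coh} par \ref{topo-can-O-coh}.
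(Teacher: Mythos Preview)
Your proposal is correct and follows essentially the same approach as the paper: descend $\phi$ to a morphism $\phi^{(m_0)}$ at finite level via \cite[3.6]{Be1}, invoke \cite[3.7.3]{bosch} for its automatic continuity, and conclude. The paper is more terse in the final step, simply asserting that continuity of $\phi$ follows, whereas you spell out explicitly the passage through the universal property of the inductive limit topology; both amount to the same argument.
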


\begin{proof}
D'après \cite[3.6]{Be1}, 
il existe pour $m _0$ assez grand un morphisme de $\widetilde{D} ^{(m _0)} _{\X ^{\sharp},\Q}$-modules de type fini
$\phi ^{(m _0)}
\colon 
E ^{(m _0)} \to E ^{\prime (m _0)}$
tel que 
$\smash{D} ^\dag _{\X  ^{\sharp}} (\hdag T) _{\Q} \otimes _{\widetilde{D} ^{(m _0)} _{\X ^{\sharp},\Q} } \phi ^{(m _0)} $ 
soit isomorphe à $\phi$. 
Comme $\phi ^{(m _0)}$ est continue pour les topologies canoniques (voir \cite[3.7.3]{bosch}),
il en résulte que $\phi$ est continu. Le cas respectif se traite de la même manière. 
\end{proof}

\begin{lemm}
\label{epi-stric-topo-can}
Soit $\phi\colon E \twoheadrightarrow E'$ un épimorphisme de $\smash{D} ^\dag _{\X  ^{\sharp}} (\hdag T) _{\Q}$-modules cohérents 
(resp. de $O  _{\X} (\hdag T) _{\Q}$-modules de type fini). 
Alors $\phi$ est un morphisme strict.
\end{lemm}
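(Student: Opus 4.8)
The plan is to reduce the statement to finite level and then to invoke Lemma~\ref{lim-surj-strict}. First I would set up the situation exactly as in the proof of Lemma~\ref{cont-Dcoh}: by \cite[3.6]{Be1}, for $m_0$ large enough the epimorphism $\phi$ is isomorphic to the inductive limit $\underrightarrow{\lim}_m \phi^{(m)}$ of a compatible family of morphisms $\phi^{(m)}\colon E^{(m)}\to E^{\prime (m)}$ of finite type Banach $\widetilde{D}^{(m)}_{\X^{\sharp},\Q}$-modules, where $\phi^{(m)}=\widetilde{D}^{(m)}_{\X^{\sharp},\Q}\otimes_{\widetilde{D}^{(m_0)}_{\X^{\sharp},\Q}}\phi^{(m_0)}$ (in the respective case one replaces $\widetilde{D}^{(m)}_{\X^{\sharp},\Q}$ by $\widetilde{B}^{(m)}_{\X}(T)_{\Q}$ throughout). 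By the very definition of the canonical topologies in \ref{topo-can-D-coh} and \ref{topo-can-O-coh}, and using \ref{LB-mNm} to restrict the index to the cofinal set of $m\geq m_0$, the source and target carry the inductive limit topologies $E=\underrightarrow{\lim}_m E^{(m)}$ and $E'=\underrightarrow{\lim}_m E^{\prime (m)}$. Hence, by Lemma~\ref{lim-surj-strict}, it suffices to prove that $\phi^{(m)}$ is a strict surjection for every $m$ large enough.

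Next I would descend the surjectivity of $\phi$ to finite level. Setting $C^{(m_0)}:=\mathrm{coker}\,\phi^{(m_0)}$ and $C^{(m)}:=\widetilde{D}^{(m)}_{\X^{\sharp},\Q}\otimes_{\widetilde{D}^{(m_0)}_{\X^{\sharp},\Q}}C^{(m_0)}$, right exactness of the tensor product gives $C^{(m)}=\mathrm{coker}\,\phi^{(m)}$, whence $\mathrm{coker}\,\phi=\underrightarrow{\lim}_m C^{(m)}$ after passing to the limit. Since $\phi$ is an epimorphism this limit vanishes, so the finite level detection of \cite[3.6.2]{Be1} forces $C^{(m)}=0$, that is $\phi^{(m)}$ surjective, for all $m$ large enough. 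I expect this descent to be the genuine content of the argument, since it is the only step that uses Berthelot's coherence theory rather than formal topology.

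Finally, for such $m$ I would verify strictness of $\phi^{(m)}$ directly, without appealing to any open mapping theorem. Choosing finitely many generators of the finite type module $E^{\prime (m)}$ and lifting them along the surjection $\phi^{(m)}$ yields a presentation $(\widetilde{D}^{(m)}_{\X^{\sharp},\Q})^{r}\twoheadrightarrow E^{(m)}\twoheadrightarrow E^{\prime (m)}$ whose composite is a free cover realizing the canonical (quotient) topology of $E^{\prime (m)}$. As the first map is, by construction of the canonical topology of $E^{(m)}$, a strict surjection, the transitivity of quotient topologies shows that the canonical topology of $E^{\prime (m)}$ agrees with the quotient of that of $E^{(m)}$ by $\phi^{(m)}$; thus $\phi^{(m)}$ is strict. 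Applying Lemma~\ref{lim-surj-strict} to the family $(\phi^{(m)})_{m\geq m_0}$ of strict surjections then gives that $\phi=\underrightarrow{\lim}_m\phi^{(m)}$ is a strict surjection, which is what we wanted. The respective case is settled identically, with $\widetilde{B}^{(m)}_{\X}(T)_{\Q}$ in place of $\widetilde{D}^{(m)}_{\X^{\sharp},\Q}$.
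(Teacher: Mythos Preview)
Your overall approach matches the paper's almost step for step: descend to level $m_0$ via \cite[3.6]{Be1}, use the vanishing of the cokernel after tensoring up to $\smash{D}^\dag_{\X^{\sharp}}(\hdag T)_{\Q}$ to force $\phi^{(m)}$ surjective for $m$ large, show each $\phi^{(m)}$ is strict, and conclude with Lemma~\ref{lim-surj-strict}. That is exactly what the paper does.

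There is, however, a slip in your third paragraph. Lifting generators of $E^{\prime(m)}$ along $\phi^{(m)}$ gives elements of $E^{(m)}$ whose images generate $E^{\prime(m)}$, but there is no reason they should generate $E^{(m)}$ itself; so the map $(\widetilde{D}^{(m)}_{\X^{\sharp},\Q})^{r}\to E^{(m)}$ you write down is not in general a surjection, and the ``transitivity of quotient topologies'' argument does not apply as stated. The fix is immediate: start instead from any free cover $(\widetilde{D}^{(m)}_{\X^{\sharp},\Q})^{r}\twoheadrightarrow E^{(m)}$, compose with $\phi^{(m)}$ to obtain a free cover of $E^{\prime(m)}$, and then your transitivity argument goes through (the well-definedness of the canonical topology, established in \ref{topo-can-D-coh} via \cite[3.7.3]{bosch}, guarantees that this composite realizes the canonical topology on $E^{\prime(m)}$). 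The paper itself bypasses this computation and simply invokes \cite[3.7.3, corollary~5]{bosch}, which is the open mapping theorem in this context, to conclude directly that the continuous surjections $\phi^{(m)}$ between finite-type Banach modules are strict.
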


\begin{proof}
D'après \cite[3.6]{Be1}, 
il existe pour $m _0$ assez grand un morphisme de $\widetilde{D} ^{(m _0)} _{\X ^{\sharp},\Q}$-modules de type fini
$\phi ^{(m _0)}
\colon 
E ^{(m _0)} \to E ^{\prime (m _0)}$
tel que 
$\smash{D} ^\dag _{\X  ^{\sharp}} (\hdag T) _{\Q} \otimes _{\widetilde{D} ^{(m _0)} _{\X ^{\sharp},\Q} } \phi ^{(m _0)} $ 
soit isomorphe à $\phi$. 
Comme $\smash{D} ^\dag _{\X  ^{\sharp}} (\hdag T) _{\Q} \otimes _{\widetilde{D} ^{(m _0)} _{\X ^{\sharp},\Q} } \mathrm{coker} (\phi ^{(m _0})= \{0\}$,
quitte à augmenter $m _0$, on peut supposer que $\phi ^{(m _0)}$ est surjectif. 
Il en est alors de même de $\widetilde{D} ^{(m )}_{\X ^{\sharp},\Q} \otimes _{\widetilde{D} ^{(m _0)} _{\X ^{\sharp},\Q} } \phi ^{(m _0)} $, pour tout entier 
$m \geq m _0$, ces derniers étant d'ailleurs continus et stricts pour les topologies de Banach respectives (voir \cite[3.7.3, corollary 5]{bosch}).
Par passage à la limite, il en résulte que $\phi$ est strict (voir \ref{lim-surj-strict}).
Le cas respectif se traite de la même manière. 
\end{proof}

\begin{lemm}
\label{topo-cano-riche}
La topologie de $O _{\X,\Q}$-algèbre localement convexe canonique sur $O  _{\X} (\hdag T) _{\Q}$
est plus riche que celle induite par sa structure de $O _{\X,\Q}$-algèbre normée. 
Pour la topologie canonique,
on en déduit que $O  _{\X} (\hdag T) _{\Q}$
est une $O _{\X,\Q}$-algèbre de type LB. 
\end{lemm}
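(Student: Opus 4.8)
The plan is to establish the two assertions in turn, the second following formally from the first. Recall from \ref{topo-cano-Odag} that the canonical topology on $O_{\X}(\hdag T)_{\Q}$ is, by definition, the one making the canonical map $\underrightarrow{\lim}_m \widetilde{B}^{(m)}_{\X}(T)_{\Q} \riso O_{\X}(\hdag T)_{\Q}$ a homeomorphism, i.e.\ the finest locally convex topology for which all the structural maps $\widetilde{B}^{(m)}_{\X}(T)_{\Q} \to O_{\X}(\hdag T)_{\Q}$ are continuous. To show that this topology is finer than the normed one, it suffices, by the universal property of the inductive limit, to check that the identity of $O_{\X}(\hdag T)_{\Q}$ from the canonical topology to the normed topology is continuous; and this in turn reduces to verifying, for each $m$, that the structural map $\widetilde{B}^{(m)}_{\X}(T)_{\Q} \to O_{\X}(\hdag T)_{\Q}$ stays continuous when the target is given the normed topology.

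For this I would use the identification $O_{\X}(\hdag T)_{\Q} \riso O_{\X}[\frac{1}{f}]^{\dag}_K$ of \ref{topo-cano-Odag}. Both the source Banach topology and the normed topology on the target are $p$-adic gauge topologies attached to lattices, so the required continuity is equivalent to a boundedness statement: the image of the unit lattice $\widetilde{B}^{(m)}_{\X}(T)$ must be contained in $p^{-c}O_{\X}[\frac{1}{f}]^{\dag}$ for some integer $c$. This is precisely what Berthelot's construction \cite{Be1} provides: the rings $\widehat{B}^{(m)}_{\X}(T)$ are built as $p$-adically complete $\V$-sublattices of the common completion $O_{\X}\{\frac{1}{f}\}$, whose induced $p$-adic topology restricts on the overconvergent subring $O_{\X}[\frac{1}{f}]^{\dag}_K$ to the normed topology (whose completion is indeed $O_{\X}\{\frac{1}{f}\}_K$, as recalled in \ref{topo-cano-Odag}); hence the image of $\widetilde{B}^{(m)}_{\X}(T)$ is bounded and the boundedness follows. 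The structural maps are therefore continuous for the normed topology, and the canonical topology is finer than the normed one.

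It remains to deduce that $O_{\X}(\hdag T)_{\Q}$, with its canonical topology, is of type LB. The presentation $\underrightarrow{\lim}_m \widetilde{B}^{(m)}_{\X}(T)_{\Q} \riso O_{\X}(\hdag T)_{\Q}$ is a homeomorphism, the spaces $\widetilde{B}^{(m)}_{\X}(T)_{\Q}$ are $K$-Banach spaces, and the level-change maps $\widetilde{B}^{(m)}_{\X}(T)_{\Q} \to \widetilde{B}^{(m+1)}_{\X}(T)_{\Q}$ are continuous, so every requirement of Definition \ref{defi-LB} is met save separatedness. This last point is the only genuinely non-formal one --- a locally convex inductive limit of Banach spaces need not be Hausdorff --- and it is exactly here that the first assertion is used: the normed topology is separated, since $\cap_{n\in\N} p^n O_{\X}[\frac{1}{f}]^{\dag} = \{0\}$ by \ref{topo-cano-Odag}, and any topology finer than a separated topology is itself separated. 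Hence the canonical topology is separated, and the inductive system $(\widetilde{B}^{(m)}_{\X}(T)_{\Q})_{m\in\N}$ of $K$-Banach algebras exhibits $O_{\X}(\hdag T)_{\Q}$ as an $O_{\X,\Q}$-algebra of type LB, as desired.
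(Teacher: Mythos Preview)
Your proof is correct and follows essentially the same approach as the paper's: reduce the first assertion to the continuity of each level map $\widetilde{B}^{(m)}_{\X}(T)_{\Q} \to O_{\X}(\hdag T)_{\Q}$ for the normed topology on the target, then deduce the LB property from the resulting separatedness. The only difference is that the paper is slightly more direct on the key boundedness step: rather than arguing via the completion $O_{\X}\{\frac{1}{f}\}$, it simply cites the explicit description $\widetilde{B}^{(m)}_{\X}(T) \riso O_{\X}\{p/f^{p^{m+1}}\}$ from \cite[4.3.2]{Be1} to exhibit a $\V$-algebra morphism $\widetilde{B}^{(m)}_{\X}(T) \hookrightarrow O_{\X}[\frac{1}{f}]^{\dag}$ (so your constant $c$ is in fact $0$), whence continuity is immediate.
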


\begin{proof}
On dispose des isomorphismes canoniques $O  _{\X} (\hdag T) _{\Q} \riso O _{\X} [\frac{1}{f}] ^{\dag} _K$ et 
$\widetilde{B} ^{(m)} _{\X} (T) \riso
O _{\X} \{  \frac{p}{f ^{p ^{m+1}}}\}$ (voir la preuve de \cite[4.3.2]{Be1}).
Il en résulte le morphisme de $\V$-algèbres
$\widetilde{B} ^{(m)} _{\X} (T) \hookrightarrow  O _{\X} [\frac{1}{f}] ^{\dag}$ 
(pour l'injectivité,
voir \cite[4.3.3.(ii)]{Be1}). 
Le morphisme canonique de $O _{\X,\Q}$-algèbres 
$\widetilde{B} ^{(m)} _{\X} (T) _{\Q} \hookrightarrow O  _{\X} (\hdag T) _{\Q}$
est donc continu, avec $O  _{\X} (\hdag T) _{\Q}$ muni de sa topologie de
$O _{\X,\Q}$-algèbre normée. 
En passant à la limite, on en déduit le résultat. 
\end{proof}

\begin{lemm}
\label{I-induit-topcan-ferme}
Soit 
$I \subset (O  _{\X} (\hdag T) _{\Q}) ^{r}$
un monomorphisme de $O  _{\X} (\hdag T) _{\Q}$-modules.
Alors $I$ est fermé dans $(O  _{\X} (\hdag T) _{\Q}) ^{r}$ pour la topologie induite par la topologie canonique de $(O  _{\X} (\hdag T) _{\Q}) ^{r}$. 
\end{lemm}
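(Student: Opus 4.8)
Le plan est de se ramener à la topologie normée, plus grossière, sur laquelle on dispose des outils de l'algèbre affinoïde. On commencerait par remarquer que la topologie canonique de $(O _{\X} (\hdag T) _{\Q}) ^{r}$ au sens de \ref{topo-can-O-coh} n'est autre que la topologie produit des $r$ copies de $O _{\X} (\hdag T) _{\Q}$ munie de sa topologie canonique, puisque $\underrightarrow{\lim} _m (\widetilde{B} ^{(m)} _{\X} (T) _{\Q}) ^{r} \riso (\underrightarrow{\lim} _m \widetilde{B} ^{(m)} _{\X} (T) _{\Q}) ^{r}$. D'après \ref{topo-cano-riche}, la topologie canonique sur $O _{\X} (\hdag T) _{\Q}$ est plus riche que sa topologie de $O _{\X, \Q}$-algèbre normée; en passant aux produits, la topologie canonique sur $(O _{\X} (\hdag T) _{\Q}) ^{r}$ est donc plus riche que la topologie normée produit. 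Comme tout fermé pour une topologie l'est encore pour toute topologie plus fine, il suffit de montrer que $I$ est fermé dans $(O _{\X} (\hdag T) _{\Q}) ^{r}$ pour la topologie normée produit.

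On exploiterait ensuite la structure affinoïde du complété. D'après \ref{topo-cano-Odag}, le séparé complété de $O _{\X} (\hdag T) _{\Q}$ pour sa topologie normée est l'algèbre affinoïde $O _{\X} \{ \frac{1}{f}\} _K$, qui est noethérienne, et l'image de $O _{\X} (\hdag T) _{\Q}$ y est dense; le complété de $(O _{\X} (\hdag T) _{\Q}) ^{r}$ est ainsi le module de Banach $(O _{\X} \{ \frac{1}{f}\} _K) ^{r}$. Notons $\bar{I}$ l'adhérence de $I$ dans ce dernier. Comme $O _{\X} \{ \frac{1}{f}\} _K$ est noethérienne, le sous-module $O _{\X} \{ \frac{1}{f}\} _K \cdot I$ engendré par $I$ est de type fini, donc fermé (tout sous-module de type fini d'un module de Banach de type fini sur une algèbre affinoïde est fermé); puisque $I = O _{\X} (\hdag T) _{\Q} \cdot I$ est dense dans $O _{\X} \{ \frac{1}{f}\} _K \cdot I$, on aurait $\bar{I} = O _{\X} \{ \frac{1}{f}\} _K \cdot I$.

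Il resterait alors à établir l'égalité $I = \bar{I} \cap (O _{\X} (\hdag T) _{\Q}) ^{r}$, l'inclusion de gauche à droite étant immédiate, et c'est là le point principal. Il repose sur la platitude fidèle du morphisme canonique $O _{\X} (\hdag T) _{\Q} \to O _{\X} \{ \frac{1}{f}\} _K$ de l'algèbre dagger vers son complété affinoïde : pour un tel morphisme, et pour tout sous-module $N$ d'un $O _{\X} (\hdag T) _{\Q}$-module $M$, le morphisme $M \to M \otimes _{O _{\X} (\hdag T) _{\Q}} O _{\X} \{ \frac{1}{f}\} _K$ est injectif et vérifie $(N \otimes _{O _{\X} (\hdag T) _{\Q}} O _{\X} \{ \frac{1}{f}\} _K) \cap M = N$; appliqué à $M = (O _{\X} (\hdag T) _{\Q}) ^{r}$ et $N = I$, cela fournit exactement $I = \bar{I} \cap (O _{\X} (\hdag T) _{\Q}) ^{r}$, d'où la fermeture de $I$ pour la topologie normée produit, donc pour la topologie canonique. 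La difficulté essentielle sera donc de disposer, dans le cadre de Berthelot, de cette platitude fidèle de $O _{\X} (\hdag T) _{\Q} \to O _{\X} \{ \frac{1}{f}\} _K$ (et, accessoirement, de la noethérianité affinoïde), les autres étapes étant formelles. On notera que cette approche directe évite de présupposer que les modules de type fini sont séparés, c'est-à-dire de type LB, propriété que l'on souhaite précisément déduire ensuite de \ref{qsepLBestLB}.
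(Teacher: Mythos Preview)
Your approach is correct and rests on the same two ingredients as the paper's proof: lemma \ref{topo-cano-riche} (the canonical topology refines the normed one) and the faithful flatness of the weak completion into its $p$-adic completion, together with noetherianity. The organization, however, is different. The paper argues on the quotient side: it equips $M = (O_\X(\hdag T)_\Q)^r / I$ with the quotient topology, notes via \ref{topo-cano-riche} that the images $p^n M_0$ of $p^n (O_\X[\frac{1}{f}]^\dag)^r$ are open in $M$, and then shows $\cap_n p^n M_0 = \{0\}$ directly at the \emph{integral} level: $M_0$ is a finite module over the noetherian ring $O_\X[\frac{1}{f}]^\dag$, and the faithful flatness of $O_\X[\frac{1}{f}]^\dag \to O_\X\{\frac{1}{f}\}$ forces it to be $p$-adically separated (Krull's intersection theorem, cf.\ Matsumura). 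Your route via the affinoid completion $O_\X\{\frac{1}{f}\}_K$ and the intersection identity $I = (I \otimes_A B) \cap A^r$ is an equivalent repackaging, but it invokes faithful flatness at the $K$-level; this is not an additional difficulty, since it follows from the integral statement by base change along $\V \to K$ (a nonzero $A_K$-module is in particular a nonzero $A$-module). The paper's version has the minor advantage of staying integral throughout and citing only standard commutative algebra, while yours makes the affinoid picture more explicit; both avoid any circularity with the LB property, as you correctly point out.
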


\begin{proof}
Notons $M := (O  _{\X} (\hdag T) _{\Q}) ^{r} / I$ et 
$M _0$ l'image du morphisme composé 
$(O _{\X} [\frac{1}{f}] ^{\dag}) ^{r}  \subset (O  _{\X} (\hdag T) _{\Q}) ^{r} \twoheadrightarrow M$.
Munissons $M$ de la topologie quotient (pour la topologie canonique de 
$(O  _{\X} (\hdag T) _{\Q}) ^{r}$).
Il s'agit ainsi de vérifier que $M$ est séparé pour cette topologie.
Or, comme pour tout entier $n$ le $\V$-module
$p ^{n}  (O _{\X} [\frac{1}{f}] ^{\dag}) ^{r}$ est un ouvert 
de $(O  _{\X} (\hdag T) _{\Q}) ^{r}$ (voir \ref{topo-cano-riche}), 
le $\V$-module $p ^{n}  M _0$ est alors un ouvert de $M$. 
En outre, comme $O _{\X} [\frac{1}{f}] ^{\dag}$ est noethérien et comme $O _{\X} [\frac{1}{f}] ^{\dag} \to O _{\X} \{ \frac{1}{f} \}$ est fidèlement plat,
comme $M _0$ est un $O _{\X} [\frac{1}{f}] ^{\dag}$-module de type fini, alors $M _0$ est séparé pour la topologie $p$-adique (voir \cite[Théorèmes 8.10 et 8.12]{matsumura}), i.e. $\cap _{n\in \N} p ^{n} M _0 = \{0 \}$. Il en résulte que $M$ est séparé.
\end{proof}

\begin{prop}
\label{O-coh-LB}
Les $O  _{\X} (\hdag T) _{\Q}$-modules de type fini sont des $O _{\X,\Q}$-modules de type LB pour la topologie canonique. 
De plus, les sous-$O  _{\X} (\hdag T) _{\Q}$-modules d'un $O  _{\X} (\hdag T) _{\Q}$-module de type fini $M$
sont fermés dans $M$ pour la topologie canonique de $M$. 
\end{prop}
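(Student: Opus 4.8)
Le plan est de tout ramener au cas du module libre $(O_{\X}(\hdag T)_{\Q})^r$, d\'ej\`a trait\'e par le lemme \ref{I-induit-topcan-ferme}, puis d'exploiter la machinerie sur les espaces de type $LB$ du premier chapitre. Les deux ingr\'edients cl\'es seront, d'une part, que tout \'epimorphisme de $O_{\X}(\hdag T)_{\Q}$-modules de type fini est strict (lemme \ref{epi-stric-topo-can}), ce qui assure que la topologie canonique d'un quotient co\"{\i}ncide avec la topologie quotient, et d'autre part que $(O_{\X}(\hdag T)_{\Q})^r$ est de type $LB$ (\ref{topo-cano-riche} et \ref{topo-can-O-coh}).

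Pour la seconde assertion, soit $M$ un $O_{\X}(\hdag T)_{\Q}$-module de type fini et $N \subset M$ un sous-module. Je choisirais une pr\'esentation, i.e. un \'epimorphisme $\pi \colon (O_{\X}(\hdag T)_{\Q})^r \twoheadrightarrow M$. D'apr\`es \ref{epi-stric-topo-can}, $\pi$ est strict, de sorte que $M$ est muni de la topologie quotient. Le sous-module $\pi ^{-1}(N) \subset (O_{\X}(\hdag T)_{\Q})^r$ est ferm\'e d'apr\`es \ref{I-induit-topcan-ferme}. Son compl\'ementaire \'etant ouvert et \'egal \`a l'image inverse par $\pi$ du compl\'ementaire de $N$, la d\'efinition de la topologie quotient entra\^{\i}ne alors que $N$ est ferm\'e dans $M$.

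Pour la premi\`ere assertion, je reprendrais l'\'epimorphisme $\pi \colon (O_{\X}(\hdag T)_{\Q})^r \twoheadrightarrow M$ de noyau $I$. L'espace $(O_{\X}(\hdag T)_{\Q})^r$ est de type $LB$: sa topologie canonique (\ref{topo-can-O-coh}) l'identifie \`a la limite inductive filtrante $\underrightarrow{\lim}_m (\widetilde{B}^{(m)}_{\X}(T)_{\Q})^r$ de $K$-espaces de Banach, et il est s\'epar\'e d'apr\`es \ref{I-induit-topcan-ferme} appliqu\'e au sous-module nul (comparer \`a \ref{topo-cano-riche} pour le rang $1$). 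Le noyau $I$ \'etant lui aussi ferm\'e par \ref{I-induit-topcan-ferme} et $\pi$ \'etant strict (\ref{epi-stric-topo-can}), le module $M$ muni de sa topologie canonique s'identifie au quotient de $(O_{\X}(\hdag T)_{\Q})^r$ par le sous-module ferm\'e $I$; ce quotient est s\'epar\'e, donc $M$ est de type $LB$ d'apr\`es \ref{qsepLBestLB}.

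La part essentielle du travail ayant d\'ej\`a \'et\'e accomplie dans le lemme \ref{I-induit-topcan-ferme} (via la noeth\'erianit\'e et la platitude fid\`ele de $O_{\X}[\frac{1}{f}]^{\dag} \to O_{\X}\{\frac{1}{f}\}$, ainsi que la s\'eparation $p$-adique des modules de type fini), la difficult\'e restante est essentiellement formelle: le point \`a v\'erifier avec soin est que la topologie canonique de \ref{topo-can-O-coh} co\"{\i}ncide bien avec la topologie quotient, ce que garantit le caract\`ere strict \'etabli en \ref{epi-stric-topo-can}.
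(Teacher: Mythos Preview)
Your proof is correct and follows essentially the same route as the paper's. The only minor difference is organizational: for the second assertion you pull back directly to the free module via $\pi^{-1}(N)$ and invoke \ref{I-induit-topcan-ferme}, whereas the paper first establishes the LB property and then observes that $M/M'$, being itself of finite type, is separated for its canonical topology (which coincides with the quotient topology by \ref{epi-stric-topo-can}); both arguments are equivalent and rest on the same ingredients.
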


\begin{proof}
Soit $M$ un $O  _{\X} (\hdag T) _{\Q}$-module de type fini muni de sa topologie canonique.
Il existe un épimorphisme de $O  _{\X} (\hdag T) _{\Q}$-modules de la forme 
$(O  _{\X} (\hdag T) _{\Q}) ^{r}  \twoheadrightarrow M$.
D'après \ref{epi-stric-topo-can}, ce morphisme est strict pour les topologies canoniques respectives.
Il résulte alors de \ref{I-induit-topcan-ferme} que $M$ est séparé et donc $M$ est un $O _{\X,\Q}$-module de type LB. 
Si $M'$ est un sous-$O  _{\X} (\hdag T) _{\Q}$-module de $M$, alors, grâce à \ref{epi-stric-topo-can},
le module $M /M'$ est séparé pour la topologie quotient induite par $M$. D'où le résultat.
\end{proof}

\begin{rema}
Soit 
$I \subset (\smash{D} ^\dag _{\X  ^{\sharp}} (\hdag T) _{\Q}) ^{r}$
un sous-$\smash{D} ^\dag _{\X  ^{\sharp}} (\hdag T) _{\Q}$-module cohérent.
Soit 
$J \subset (O _{\X} (\hdag T) _{\Q}) ^{r}$
un sous-$O _{\X} (\hdag T) _{\Q}$-module.
Il n'est pas clair que ces inclusions soient stricts pour les topologies canoniques respectives
ni que $I$ soit fermé dans $(\smash{D} ^\dag _{\X  ^{\sharp}} (\hdag T) _{\Q}) ^{r}$. 
Cependant, d'après une communication de Tomoyuki Abe,
les $\smash{D} ^\dag _{\X  ^{\sharp}} (\hdag T) _{\Q}$-modules cohérents sont aussi des espaces de type LB.
\end{rema}

\subsection{Cas du faisceau des opérateurs différentiels de niveau fini à singularités surconvergentes}

\begin{prop}
\label{morphDdag->Ddagcontin}
Soient $u\colon \ZZ \hookrightarrow \X$ une immersion fermée de $\V$-schémas formels affines et lisses,
$\mathfrak{D}$ un diviseur à croisements normaux strict de $\mathfrak{X}$ tel que 
$u ^{-1} (\mathfrak{D})$ soit un diviseur à croisements normaux strict de
$\mathcal{Z}$. On pose $\mathfrak{X} ^{\sharp}:= (\mathfrak{X}, \mathfrak{D})$,
$\mathcal{Z} ^{\sharp}:= (\mathcal{Z}, u ^{-1}\mathfrak{D})$
et $u ^{\sharp}\colon \mathcal{Z} ^{\sharp} \hookrightarrow \mathfrak{X} ^{\sharp}$
l'immersion fermée exacte de schémas formels logarithmiques lisses sur $\V$.
Soit $\phi$ un morphisme $\smash{D} ^\dag _{\X  ^{\sharp}} (\hdag T) _{\Q}$-linéaire à gauche de la forme
$\phi \colon (\smash{D} ^\dag _{\X  ^{\sharp}} (\hdag T) _{\Q}) ^{r} \to (\smash{D} ^\dag _{\X  ^{\sharp}} (\hdag T) _{\Q}) ^{s}$.
Soit $\psi$ un morphisme $\smash{D} ^\dag _{\X  ^{\sharp}} (\hdag T) _{\Q}$-linéaire à droite de la forme
$\psi \colon (\smash{D} ^\dag _{\X  ^{\sharp}} (\hdag T) _{\Q}) ^{r} \to (\smash{D} ^\dag _{\X  ^{\sharp}} (\hdag T) _{\Q}) ^{s}$.
Notons $u ^* _g (\phi) \colon (D ^\dag _{\ZZ ^{\sharp} \to \X ^{\sharp},\Q}) ^{r} \to (D ^\dag _{\ZZ ^{\sharp} \to \X ^{\sharp},\Q}) ^{s}$
et
$u ^* _d (\psi) \colon (D ^\dag _{\X ^{\sharp} \leftarrow \ZZ ^{\sharp},\Q}) ^{r} \to (D ^\dag _{\X ^{\sharp} \leftarrow \ZZ ^{\sharp},\Q}) ^{s}$
les morphismes $\smash{D} ^\dag _{\ZZ ^{\sharp}} (\hdag U) _{\Q}$-linéaires induits par fonctorialité. 
Les morphismes $\phi$, $u ^* _g (\phi) $, $u ^* _d (\psi) $ sont continus pour les topologies canoniques respectives (voir
les définitions de \ref{defi-topo-can}).
\end{prop}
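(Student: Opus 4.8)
La stratégie générale est de ramener la continuité de ces morphismes au niveau fini, puis de passer à la limite inductive en utilisant le lemme \ref{lim-surj-strict} (ou directement la fonctorialité de la limite inductive topologique). Plus précisément, un morphisme $\smash{D} ^\dag _{\X ^{\sharp}} (\hdag T) _{\Q}$-linéaire entre modules libres de type fini est donné par une matrice à coefficients dans $\smash{D} ^\dag _{\X ^{\sharp}} (\hdag T) _{\Q}$. D'après \cite[3.6]{Be1}, pour $m_0$ assez grand chaque coefficient provient d'un élément de $\widetilde{D} ^{(m_0)} _{\X ^{\sharp},\Q}$, de sorte que $\phi$ est l'image par $\underrightarrow{\lim} _m$ d'une famille compatible de morphismes $\widetilde{D} ^{(m)} _{\X ^{\sharp},\Q}$-linéaires $\phi ^{(m)} \colon (\widetilde{D} ^{(m)} _{\X ^{\sharp},\Q}) ^{r} \to (\widetilde{D} ^{(m)} _{\X ^{\sharp},\Q}) ^{s}$, pour $m \geq m_0$.

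\textbf{Étape au niveau fini.}
D'abord je vérifierais qu'au niveau $m$ fixé, les morphismes $\phi ^{(m)}$, $u ^* _g (\phi ^{(m)})$ et $u ^* _d (\psi ^{(m)})$ sont continus pour les topologies de Banach respectives décrites en \ref{defi-topo-can}. Comme $\widetilde{D} ^{(m)} _{\X ^{\sharp},\Q}$ est une $\widetilde{B} ^{(m)} _{\X} (T) _{\Q}$-algèbre de Banach et que les modules libres de type fini portent la topologie produit, tout morphisme linéaire donné par une matrice à coefficients dans l'anneau est automatiquement continu, la multiplication étant continue (c'est précisément l'argument de \cite[3.7.3]{bosch} déjà invoqué en \ref{cont-Dcoh}). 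Pour $u ^* _g (\phi)$ et $u ^* _d (\psi)$, il faut observer que l'image inverse par $u$ s'obtient en appliquant le foncteur $\widetilde{D} ^{(m)} _{\ZZ ^{\sharp} \to \X ^{\sharp},\Q} \widehat{\otimes} _{\widetilde{D} ^{(m)} _{\X ^{\sharp},\Q}} -$ (resp. $- \widehat{\otimes} _{\widetilde{D} ^{(m)} _{\X ^{\sharp},\Q}} \widetilde{D} ^{(m)} _{\X ^{\sharp} \leftarrow \ZZ ^{\sharp},\Q}$), lequel envoie encore un morphisme matriciel sur un morphisme matriciel à coefficients dans $\widetilde{B} ^{(m)} _{\ZZ} (U) _{\Q}$; la continuité en résulte par le même argument.

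\textbf{Passage à la limite.}
Ensuite, comme les topologies canoniques sur $\smash{D} ^\dag _{\X ^{\sharp}} (\hdag T) _{\Q}$ et sur ses analogues de transfert sont définies comme limites inductives dans $\mathfrak{C}$ des topologies de Banach de niveau fini (voir \ref{defi-topo-can}), et que les morphismes de changement de niveau $\widetilde{D} ^{(m)} _{\X ^{\sharp},\Q} \to \widetilde{D} ^{(m+1)} _{\X ^{\sharp},\Q}$ sont continus, le morphisme $\phi = \underrightarrow{\lim} _m \phi ^{(m)}$ est continu par la propriété universelle de la topologie limite inductive : le composé avec chaque injection canonique de niveau fini est continu, donc $\phi$ l'est. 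La même chose vaut pour $u ^* _g (\phi)$ et $u ^* _d (\psi)$, en utilisant que $\widetilde{D} ^\dag _{\ZZ ^{\sharp}\to \X^{\sharp},\Q}$ et $\widetilde{D} ^\dag _{\X ^{\sharp} \leftarrow \ZZ ^{\sharp},\Q}$ sont munis des topologies limites inductives correspondantes, et éventuellement \ref{LB-mNm} pour se restreindre aux indices $m \geq m_0$ cofinaux.

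\textbf{La principale difficulté.}
Le point le plus délicat n'est pas la continuité au niveau fini (qui est formelle) mais la vérification que l'image inverse extraordinaire $u ^* _g$, resp. $u ^* _d$, commute effectivement au passage à la limite sur le niveau et respecte la description matricielle — autrement dit que $u ^* _g (\phi) = \underrightarrow{\lim} _m u ^* _g (\phi ^{(m)})$ au sens topologique, compatiblement avec les isomorphismes $\widetilde{D} ^\dag _{\ZZ ^{\sharp}\to \X^{\sharp},\Q} = \underrightarrow{\lim} _m \widetilde{D} ^{(m)} _{\ZZ ^{\sharp}\to \X ^{\sharp},\Q}$. Cela repose sur le fait que le produit tensoriel complété commute aux limites inductives filtrantes dans ce cadre, ce qui devrait se déduire des résultats du premier chapitre sur les produits tensoriels complétés de modules localement convexes.
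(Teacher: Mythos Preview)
Your proposal is correct, and for the continuity of $\phi$ it coincides with the paper's argument (the paper simply invokes Lemma~\ref{cont-Dcoh}, whose proof is precisely the descent-to-level-$m$-then-pass-to-the-limit you describe).

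For $u^*_g(\phi)$ and $u^*_d(\psi)$, the paper takes a slightly different and somewhat slicker route. Instead of working with $u^*_g(\phi^{(m)})$ level by level, it uses the canonical surjection $\pi\colon (\smash{D}^\dag_{\X^{\sharp}}(\hdag T)_\Q)^n \twoheadrightarrow (D^\dag_{\ZZ^{\sharp}\to\X^{\sharp},\Q})^n$ and the identity $u^*_g(\phi)\circ\pi = \pi\circ\phi$. It then shows that $\pi$ is continuous and \emph{strict}: at each level the surjection $\widetilde{D}^{(m)}_{\X^{\sharp},\Q}\to\widetilde{D}^{(m)}_{\ZZ^{\sharp}\to\X^{\sharp},\Q}$ is continuous (it respects the integral lattices) and hence strict by the open mapping theorem, and Lemma~\ref{lim-surj-strict} passes strictness to the limit. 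Since $\phi$ is already continuous and $\pi$ is a strict epimorphism, the continuity of $u^*_g(\phi)$ follows formally. This bypasses entirely what you flag as the ``principal difficulty'': the paper never needs to identify $u^*_g(\phi)$ with $\underrightarrow{\lim}_m u^*_g(\phi^{(m)})$ topologically, because it never unpacks $u^*_g(\phi)$ into levels.

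That said, your worry is overstated: the identification $u^*_g(\phi)=\underrightarrow{\lim}_m u^*_g(\phi^{(m)})$ is immediate algebraically (both are right multiplication by the same matrix with entries in $\widetilde{D}^{(m_0)}_{\X^{\sharp},\Q}$), and continuity then follows from the universal property of the inductive limit topology. No completed tensor products are needed here since the modules involved are free of finite rank; your invocation of $\widehat{\otimes}$ is a harmless overcomplication. So your approach works, but the paper's factorisation through the strict surjection $\pi$ is cleaner and makes the role of Lemma~\ref{lim-surj-strict} more transparent.
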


\begin{proof}
1) La continuité de $\phi$ résulte du lemme \ref{cont-Dcoh}.

2) Vérifions à présent que $u ^* _g (\phi)$ est continu. 
Notons $\pi$ les morphismes $O _{\X,\Q}$-linéaires surjectifs de la forme
$\pi \colon (\smash{D} ^\dag _{\X  ^{\sharp}} (\hdag T) _{\Q}) ^n \twoheadrightarrow 
(D ^\dag _{\ZZ ^{\sharp} \to \X ^{\sharp},\Q}) ^n$, pour un certain entier positif $n$.
Comme $u ^* _g (\phi) \circ \pi = \pi \circ \phi$, il suffit d'établir que 
$\pi$ est un morphisme continu et strict.
Comme la surjection canonique 
$\widetilde{D} ^{(m)}  _{\X ^{\sharp},\Q}
\to 
\widetilde{D} ^{(m)}  _{\ZZ ^{\sharp} \to \X ^{\sharp} ,\Q}$
envoie $\widetilde{D} ^{(m)}  _{\X ^{\sharp}}$ sur 
$\widetilde{D} ^{(m)}  _{\ZZ ^{\sharp} \to \X ^{\sharp} }$, celle-ci est donc continue. 
D'après le théorème de l'application ouverte de Banach, 
ce morphisme surjectif et continu de $K$-espaces de Banach est donc strict.
Il découle alors du lemme \ref{lim-surj-strict}
qu'il en est de même par passage à la limite sur le niveau de $\pi$.

3) On procède de même que pour l'étape 2) pour valider la continuité de
$ u ^* _d (\psi)$.
\end{proof}

\begin{prop}
\label{lim-separe}
Soit $(N _{m}) _{m\in\N}$ une suite strictement croissante d'entiers positifs. 
Le $K$-espace localement convexe $\underset{m}{\underrightarrow{\lim}} \widetilde{D} ^{(N _m)} _{\X^{\sharp},\Q}$
(muni de la topologie limite inductive dans la catégorie des $K$-espaces localement convexes)
est un espace de type $LB$. 
\end{prop}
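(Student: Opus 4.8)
L'espace $V := \underrightarrow{\lim} _m \widetilde{D} ^{(N _m)} _{\X^{\sharp},\Q}$ est, par construction, la limite inductive dans $\mathfrak{C}$ d'un syst\`eme de $K$-espaces de Banach (les $\widetilde{D} ^{(N _m)} _{\X^{\sharp},\Q}$ sont de Banach d'apr\`es \ref{defi-topo-can}) reli\'es par les morphismes continus de changement de niveau. Au vu de la d\'efinition \ref{defi-LB}, et puisque l'hom\'eomorphisme requis y est ici l'identit\'e, il suffit de v\'erifier que $V$ est s\'epar\'e pour conclure que $V$ est de type $LB$. Le plan est de construire une application $K$-lin\'eaire continue et injective de $V$ vers un $K$-espace localement convexe s\'epar\'e: la source d'une telle application \'etant toujours s\'epar\'ee, cela suffira. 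Quitte \`a utiliser \ref{LB-mNm}, $V$ s'identifie d'ailleurs \`a $\smash{D} ^\dag _{\X ^{\sharp}} (\hdag T) _{\Q}$, mais l'argument vaut directement pour la sous-suite $(N _m)$.

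Fixons la base de $\widehat{\D} ^{(m)} _{\X ^{\sharp}}$ sur $\O _{\X}$ form\'ee des puissances divis\'ees $\underline{\partial} ^{\langle \underline{k}\rangle _{(m)}}$ ($\underline{k} \in \N ^{d}$) des d\'erivations $\partial _1, \dots, \partial _d$ (logarithmiques le long de $\mathfrak{D}$, cf.\ \cite{these_montagnon}). Comme $\widetilde{D} ^{(m)} _{\X^{\sharp}}$ est le compl\'et\'e $p$-adique du $\widetilde{B} ^{(m)} _{\X} (T)$-module libre de base $(\underline{\partial} ^{\langle \underline{k}\rangle _{(m)}})$, tout \'el\'ement de $\widetilde{D} ^{(m)} _{\X^{\sharp},\Q}$ s'\'ecrit de fa\c{c}on unique $P = \sum _{\underline{k}} b _{\underline{k}}\, \underline{\partial} ^{\langle \underline{k}\rangle _{(m)}}$ avec $b _{\underline{k}} \in \widetilde{B} ^{(m)} _{\X} (T) _{\Q}$ tendant $p$-adiquement vers z\'ero, et l'extraction du coefficient $P \mapsto b _{\underline{k}}$ est une projection continue (une contraction). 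En rapportant les bases de tous les niveaux \`a la base universelle $\underline{\partial} ^{[\underline{k}]}$ (\'ecriture $\underline{\partial} ^{\langle \underline{k}\rangle _{(m)}} = c ^{(m)} _{\underline{k}}\, \underline{\partial} ^{[\underline{k}]}$), on obtient pour chaque $\underline{k}$ une application $a _{\underline{k}} \colon V \to O _{\X} \{ \frac{1}{f} \} _K$, donn\'ee au niveau $m$ par $P \mapsto c ^{(m)} _{\underline{k}} b _{\underline{k}}$ suivie de l'inclusion continue $\widetilde{B} ^{(m)} _{\X} (T) _{\Q} \hookrightarrow O _{\X} \{ \frac{1}{f} \} _K$ fournie par \ref{topo-cano-riche}. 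Comme le coefficient de $\underline{\partial} ^{[\underline{k}]}$ dans $P$ ne d\'epend pas du niveau o\`u l'on exprime $P$, les $a _{\underline{k}}$ sont compatibles au changement de niveau; et comme elles sont continues sur chaque terme de Banach de la limite, elles sont continues sur $V$.

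On assemble alors les $a _{\underline{k}}$ en une application $K$-lin\'eaire $a := (a _{\underline{k}}) _{\underline{k}} \colon V \to \prod _{\underline{k} \in \N ^{d}} O _{\X} \{ \frac{1}{f} \} _K$, continue par la propri\'et\'e universelle du produit, et \`a valeurs dans un espace s\'epar\'e (produit de $K$-espaces de Banach). Elle est injective: si $P \in V$ v\'erifie $a (P) = 0$, on repr\'esente $P$ par un \'el\'ement d'un $\widetilde{D} ^{(N _m)} _{\X^{\sharp},\Q}$; l'injectivit\'e de $\widetilde{B} ^{(m)} _{\X} (T) _{\Q} \hookrightarrow O _{\X} \{ \frac{1}{f} \} _K$ (voir \ref{topo-cano-riche} et \ref{topo-cano-Odag}) jointe \`a l'unicit\'e de l'\'ecriture donne $b _{\underline{k}} = 0$ pour tout $\underline{k}$, d'o\`u $P = 0$. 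Ainsi $V$ se plonge de mani\`ere continue dans un espace s\'epar\'e, donc $V$ est s\'epar\'e, et finalement de type $LB$.

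Le point d\'elicat est d'obtenir une application bien d\'efinie sur la limite $V$: cela force \`a rapporter toutes les bases \`a $\underline{\partial} ^{[\underline{k}]}$, faisant appara\^itre des facteurs $c ^{(m)} _{\underline{k}}$ dont la valuation $p$-adique cro\^it avec $m$. C'est pr\'ecis\'ement pour cette raison que la cible doit \^etre le produit $\prod _{\underline{k}} O _{\X} \{ \frac{1}{f} \} _K$, o\`u seule compte la continuit\'e coefficient par coefficient, et non un unique $K$-espace de Banach d'op\'erateurs, dans lequel une condition de croissance jointe des coefficients ne pourrait \^etre satisfaite. La continuit\'e de l'extraction $P \mapsto b _{\underline{k}}$ \`a niveau fix\'e, elle, ne pose pas de difficult\'e, gr\^ace \`a la structure de compl\'et\'e $p$-adique de module libre rappel\'ee ci-dessus.
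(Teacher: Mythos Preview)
Your proof is correct. The core idea---using the unique expansion of operators in the basis $\underline{\partial}_\sharp^{[\underline{k}]}$ together with the injective continuous inclusions $\widetilde{B}^{(m)}_\X(T)_\Q \hookrightarrow O_\X\{\tfrac{1}{f}\}_K$ to separate points---is the same as the paper's.

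The paper packages this more tersely: rather than extracting coefficients one by one into a product $\prod_{\underline{k}} O_\X\{\tfrac{1}{f}\}_K$, it introduces a single sub-$\V$-module $L \subset D^\dag_{\X^\sharp}(\hdag T)_\Q$ consisting of all convergent sums $\sum a_{\underline{k}}\,\underline{\partial}_\sharp^{[\underline{k}]}$ with $a_{\underline{k}} \in O_\X[\tfrac{1}{f}]^\dag$, observes that each $p^n L$ is open (since it contains $p^n D^\dag_{\X^\sharp}(\hdag T) = \sum_m p^n \widetilde{D}^{(m)}_{\X^\sharp}$), and concludes $\cap_n p^n L = \{0\}$ by uniqueness of the expansion. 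Your version makes the continuity of each coefficient map more explicit, at the cost of a longer argument; the paper's version is quicker but leaves the reader to unpack why $p^n L$ is open. Both arguments are equivalent in substance.
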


\begin{proof}
D'après \ref{LB-mNm}, il suffit de traiter le cas où $N _m =m$. 
Comme les $K$-espaces $ \widetilde{D} ^{(m)}  _{\X ^{\sharp},\Q}$ sont de Banach, il s'agit de vérifier 
la séparation de $\underset{m}{\underrightarrow{\lim}} ~\widetilde{D} ^{(m)}  _{\X ^{\sharp},\Q}$. 
Notons $L$ le sous-$\V$-module de $\smash{D} ^\dag _{\X  ^{\sharp}} (\hdag T) _{\Q}$
des éléments qui s'écrivent sous la forme
$\sum _{\underline{k} \in \N ^d} a _{\underline{k}} \underline{\partial} _{\sharp} ^{[\underline{k}]}$
avec $a _{\underline{k}} \in \O _{\X} [\frac{1}{f}] ^{\dag}$ (on demande bien sûr que la série converge
dans $\smash{D} ^\dag _{\X  ^{\sharp}} (\hdag T) _{\Q}$).
Or, comme $p ^{n} L \supset p ^{n}\smash{D} ^\dag _{\X  ^{\sharp}} (\hdag T)= \sum _{k=0} ^{\infty} p ^{n} \widetilde{D} ^{(m)}  _{\X ^{\sharp}}$,
alors $p ^{n} L $ 
est un ouvert de $ \smash{D} ^\dag _{\X  ^{\sharp}} (\hdag T) _{\Q}$ pour tout entier $n$.
Par unicité de l'écriture sous la forme 
$\sum _{\underline{k} \in \N ^d} a _{\underline{k}} \underline{\partial} _{\sharp} ^{[\underline{k}]}$
des éléments de $L$, 
on vérifie que
$\cap _{n \in \N} p ^{n} L =\{0\}$.
On obtient donc la séparation voulue. 
\end{proof}

\begin{prop}
\label{DotimesD-sep-seqcomp}
Les topologies canoniques sont celles définies en \ref{defi-topo-can}.

\begin{enumerate}
\item Pour tout entier $m\in \N$, soient 
$V ^{(m)}$ un $D ^{(m)} _{\ZZ ^{\sharp}}  (U) _{\Q}$-module à gauche localement convexe
et 
$V ^{(m)} \to V ^{(m+1)} $ 
un morphisme continu. 
Soient $(N _{m}) _{m\in\N}$ et $(N '_{m}) _{m\in\N}$ 
deux suites strictement croissantes d'entiers positifs.  
Avec les notations de \ref{defi-widehat-otimes},
l'isomorphisme canonique 
\begin{equation}
\label{DotimesD-sep-seqcomp-iso1}
\underrightarrow{\lim} _m 
\widetilde{D} ^{(N '_m)} _{\X ^{\sharp} \leftarrow \ZZ ^{\sharp},\Q} 
\widehat{\otimes} _{D _{\ZZ ^{\sharp},\Q}} V ^{( N _m)} 
\riso
\underrightarrow{\lim} _m 
\widetilde{D} ^{(m)}  _{\X ^{\sharp} \leftarrow \ZZ ^{\sharp},\Q} 
\widehat{\otimes} _{D _{\ZZ ^{\sharp},\Q}} V ^{( m)} .
\end{equation}
est alors un homéomorphisme. 

\item Les $K$-espaces localement convexes 
$\underrightarrow{\lim} _m 
\widetilde{D} ^{(N  _m)} _{\ZZ  ^{\sharp} \rightarrow \X ^{\sharp},\Q}$
et
$\underrightarrow{\lim} _m 
\widetilde{D} ^{(m)}  _{\X ^{\sharp} \leftarrow \ZZ ^{\sharp},\Q} 
\widehat{\otimes} _{D _{\ZZ ^{\sharp},\Q}} 
\widetilde{D} ^{(m)}  _{\ZZ ^{\sharp} \rightarrow \X ^{\sharp},\Q}$
sont des $K$-espaces de type LB.
\end{enumerate}

\end{prop}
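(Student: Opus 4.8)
On traite les deux assertions séparément, la première par un argument de cofinalité, la seconde en ramenant le caractère LB à une propriété de séparation que l'on vérifie par un calcul explicite en coordonnées locales.

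Pour l'assertion (1), je formerais le système inductif bi-indexé sur le produit filtrant $\N \times \N$, muni de l'ordre produit, donné par $W _{a,b} := \widetilde{\D} ^{(a)} _{\X ^{\sharp} \leftarrow \ZZ ^{\sharp},\Q} \widehat{\otimes} _{D _{\ZZ ^{\sharp},\Q}} V ^{(b)}$, dont les morphismes de transition proviennent des morphismes de changement de niveau $\widetilde{\D} ^{(a)} _{\X ^{\sharp} \leftarrow \ZZ ^{\sharp},\Q} \to \widetilde{\D} ^{(a+1)} _{\X ^{\sharp} \leftarrow \ZZ ^{\sharp},\Q}$, des morphismes donnés $V ^{(b)} \to V ^{(b+1)}$ et de la bifonctorialité de $- \widehat{\otimes} _{D _{\ZZ ^{\sharp},\Q}} -$ (voir \ref{defi-widehat-otimes}), l'anneau de base $D _{\ZZ ^{\sharp},\Q}$ étant fixé indépendamment du niveau. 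Comme $(N _m) _m$ et $(N '_m) _m$ sont strictement croissantes, donc non bornées, les deux sous-ensembles $\{ (N '_m, N _m) \} _{m \in \N}$ et $\{ (m,m) \} _{m \in \N}$ sont des parties cofinales de $\N \times \N$, totalement ordonnées et isomorphes comme ensembles ordonnés à $\N$; leurs limites inductives s'identifient donc aux limites inductives séquentielles figurant dans \ref{DotimesD-sep-seqcomp-iso1}. Le lemme \ref{LB-mNm} appliqué deux fois fournit alors les homéomorphismes canoniques $\underrightarrow{\lim} _m W _{N '_m, N _m} \riso \underrightarrow{\lim} _{(a,b)} W _{a,b} \liso \underrightarrow{\lim} _m W _{m,m}$, ce qui montre que \ref{DotimesD-sep-seqcomp-iso1} est un homéomorphisme; le seul point à vérifier, routinier, est la compatibilité des morphismes de transition.

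Pour l'assertion (2), j'observerais d'abord que chacun des deux espaces est une limite inductive de $K$-espaces de Banach le long de morphismes continus: les $\widetilde{\D} ^{(N _m)} _{\ZZ ^{\sharp} \to \X ^{\sharp},\Q}$ sont de Banach (voir \ref{defi-topo-can}), et chaque $\widetilde{\D} ^{(m)} _{\X ^{\sharp} \leftarrow \ZZ ^{\sharp},\Q} \widehat{\otimes} _{\widetilde{\D} ^{(m)} _{\ZZ ^{\sharp},\Q}} \widetilde{\D} ^{(m)} _{\ZZ ^{\sharp} \to \X ^{\sharp},\Q}$ est un produit tensoriel complété de modules de Banach sur une algèbre de Banach, donc de Banach (sa topologie est décrite par \ref{lemm-topo-tens-prod}, et l'on complète). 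D'après la définition \ref{defi-LB}, il suffit donc dans chaque cas d'établir que la limite inductive est séparée. Pour le premier espace, je procéderais exactement comme dans \ref{lim-separe} en remplaçant $\O _{\X} [\frac{1}{f}] ^{\dag}$ par $\O _{\ZZ} [\frac{1}{f}] ^{\dag}$: en coordonnées locales, tout élément de $\underrightarrow{\lim} _m \widetilde{\D} ^{(N _m)} _{\ZZ ^{\sharp} \to \X ^{\sharp},\Q}$ s'écrit de manière unique comme série convergente $\sum _{\underline{k}} a _{\underline{k}} \, \underline{\partial} _{\sharp} ^{[\underline{k}]}$ avec $a _{\underline{k}} \in \O _{\ZZ} [\frac{1}{f}] ^{\dag}$; en notant $L$ le sous-$\V$-module des séries à coefficients entiers, chaque $p ^n L$ est ouvert et l'unicité de l'écriture donne $\cap _n p ^n L = \{0\}$, d'où la séparation. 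On pourrait aussi présenter ce premier espace comme quotient strict de l'espace LB $\underrightarrow{\lim} _m \widetilde{\D} ^{(N _m)} _{\X ^{\sharp},\Q}$ de \ref{lim-separe}, via l'étape $2$ de \ref{morphDdag->Ddagcontin} et \ref{lim-surj-strict}, puis conclure par \ref{qsepLBestLB}.

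Le point délicat, et l'obstacle principal, est la séparation du second espace. J'utiliserais \ref{lemm-topo-tens-prod} pour décrire au niveau $m$ un réseau canonique $U _0$ du produit tensoriel, puis je calculerais explicitement $\widetilde{\D} ^{(m)} _{\X ^{\sharp} \leftarrow \ZZ ^{\sharp}} \widehat{\otimes} _{\widetilde{\D} ^{(m)} _{\ZZ ^{\sharp}}} \widetilde{\D} ^{(m)} _{\ZZ ^{\sharp} \to \X ^{\sharp}}$ en coordonnées locales: comme $\widetilde{\D} ^{(m)} _{\ZZ ^{\sharp} \to \X ^{\sharp}}$ est, en tant que $\widetilde{\D} ^{(m)} _{\ZZ ^{\sharp}}$-module à gauche, libre après complétion $p$-adique sur les dérivations normales, et de même pour le facteur de gauche en tant que module à droite, tout élément de la limite devrait s'écrire de manière unique comme série $\sum a _{\underline{i}, \underline{j}} \, \underline{\partial} ^{[\underline{i}]} \otimes \underline{\partial} ^{[\underline{j}]}$ indexée par les dérivations normales des deux transferts, à coefficients $a _{\underline{i}, \underline{j}} \in \O _{\ZZ} [\frac{1}{f}] ^{\dag}$. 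La même recette (réseau $L$ des séries à coefficients entiers, $\cap _n p ^n L = \{0\}$ par unicité) donnerait alors la séparation, donc le caractère LB par \ref{defi-LB}. C'est précisément cette identification explicite du produit tensoriel complété double, ainsi que le contrôle de sa topologie via \ref{lemm-topo-tens-prod}, qui constitue la principale difficulté; le reste n'est qu'une transcription de \ref{lim-separe}.
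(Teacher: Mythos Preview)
Your proposal is correct and follows essentially the same approach as the paper: the paper's proof is extremely terse, saying only that the first assertion is resolved ``de mani\`ere analogue \`a \ref{LB-mNm}'' and the second ``de mani\`ere analogue \`a \ref{lim-separe}'', which is precisely the cofinality argument and the explicit-lattice separation argument that you spell out in detail.
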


\begin{proof}
On résout la première assertion de manière analogue à \ref{LB-mNm}.
Pour la seconde assertion, 
on procède de manière analogue à \ref{lim-separe}. 
\end{proof}

 \section{Préservation de la cohérence par foncteur cohomologique local}

Soient $u\colon \ZZ  \hookrightarrow \X$ une immersion fermée de $\V$-schémas formels séparés, quasi-compacts et lisses, 
$T$ un diviseur de $X$ tel que $U:= T \cap Z$ soit un diviseur de $Z$.
Soit $\mathfrak{D}$ un diviseur à croisements normaux strict de $\mathfrak{X}$ tel que 
$u ^{-1} (\mathfrak{D})$ soit un diviseur à croisements normaux strict de
$\mathcal{Z}$. On pose $\mathfrak{X} ^{\sharp}:= (\mathfrak{X}, \mathfrak{D})$,
$\mathcal{Z} ^{\sharp}:= (\mathcal{Z}, u ^{-1}\mathfrak{D})$
et $u ^{\sharp}\colon \mathcal{Z} ^{\sharp} \hookrightarrow \mathfrak{X} ^{\sharp}$
l'immersion fermée exacte de schémas formels logarithmiques lisses sur $\V$.
Soit $\lambda _0 \colon \N\to \N$ une application croissante telle que $\lambda _0 (m) \geq m$, pour tout $m \in \N$.
Pour alléger les notations, 
on pose alors 
$\widetilde{\B} ^{(m)} _{\X} (T):= \widehat{\B} ^{(\lambda _0 (m))} _{\X} (T)$, 
$\widetilde{\B} ^{(m)} _{\ZZ} (U):= \widehat{\B} ^{(\lambda _0 (m))} _{\ZZ} (U)$, 
$\widetilde{\D} ^{(m)} _{\X ^{\sharp}}:= \widetilde{\B} ^{(m)} _{\X} (T) \widehat{\otimes} _{\O _{\X}} \widehat{\D} ^{(m)} _{\X^{\sharp}}$,
$\widetilde{\D} ^{(m)} _{\ZZ^{\sharp}}:= \widetilde{\B} ^{(m)} _{\ZZ} (U) \widehat{\otimes} _{\O _{\ZZ}} \widehat{\D} ^{(m)} _{\ZZ^{\sharp}}$,
$\widetilde{\D} ^{(m)}  _{\X ^{\sharp} \leftarrow \ZZ ^{\sharp}}:=
  \widehat{\D} ^{(m)}  _{\X ^{\sharp} \leftarrow \ZZ ^{\sharp}}  \widehat{\otimes} _{\O _{\ZZ}}  \widetilde{\B} ^{(m)} _{\ZZ} (U) $,
 $\widetilde{\D} ^{(m)}  _{\ZZ ^{\sharp} \rightarrow \X ^{\sharp}}:=
 \widetilde{\B} ^{(m)} _{\ZZ} (U) \widehat{\otimes} _{\O _{\ZZ}}  \widehat{\D} ^{(m)}  _{\ZZ ^{\sharp} \to \X ^{\sharp}}$,
 $\widetilde{\D} ^\dag _{\ZZ ^{\sharp} \to \X ^{\sharp},\Q} = \underrightarrow{\lim} _m \widetilde{\D} ^{(m)} _{\ZZ ^\sharp\to \X ^{\sharp},\Q}$
et
$\widetilde{\D} ^\dag _{\X ^{\sharp} \leftarrow \ZZ ^{\sharp},\Q} = \underrightarrow{\lim} _m \widetilde{\D} ^{(m)} _{\X ^{\sharp} \leftarrow \ZZ ^{\sharp},\Q}$.

\subsection{Images inverses par une immersion fermée de $\V$-schémas formels affines et lisses}

On suppose dans cette section que $\X$ est affine.

\begin{lemm}
[Théorèmes A et B]
\label{thA-B-D->}
Les morphismes canoniques
$O _{\ZZ} \otimes _{O _{\X}}\widetilde{D} ^{(m)}  _{\X ^{\sharp}}
\to 
\widetilde{D} ^{(m)}  _{\ZZ ^{\sharp} \rightarrow \X ^{\sharp}}$,
$O _{\ZZ} \otimes _{O _{\X}}\widetilde{D} ^{(m)}  _{\X ^{\sharp},\Q}
\to 
\widetilde{D} ^{(m)}  _{\ZZ ^{\sharp} \rightarrow \X ^{\sharp},\Q}$
et
$O _{\ZZ} \otimes _{O _{\X}}
\smash{D} ^\dag _{\X  ^{\sharp}} (\hdag T) _{\Q}
\to 
D ^\dag _{\ZZ ^{\sharp} \to \X ^{\sharp},\Q}$
sont des isomorphismes. 

De plus, pour tout entier $q \geq 1$, 
$H ^{q} (\ZZ, \widetilde{\D} ^{(m)} _{\ZZ ^{\sharp} \to \X ^{\sharp}})=0$,
$H ^{q} (\ZZ, \widetilde{\D} ^{(m)} _{\ZZ ^{\sharp} \to \X ^{\sharp},\Q})=0$
et
$H ^{q} (\ZZ, \D ^\dag _{\ZZ ^{\sharp} \to \X ^{\sharp},\Q})=0$.
\end{lemm}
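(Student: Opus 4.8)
Le point de d�part est la description explicite au niveau fini. Comme $\X$ est affine muni des coordonn�es $t _1,\dots, t _d$ et $\ZZ = \cap _{i=1} ^{e} V (t _i)$, le faisceau $\D ^{(m)} _{\X ^{\sharp}}$ est un $O _{\X}$-module libre de base les mon�mes divis�s $\underline{\partial} _{\sharp} ^{[\underline{k}]}$ en les d�rivations logarithmiques. Puisque $O _{\ZZ} = O _{\X}/(t _1,\dots,t _e)$, le foncteur $O _{\ZZ} \otimes _{O _{\X}} -$ n'est autre que la r�duction modulo l'id�al de type fini $(t _1,\dots,t _e)$; il envoie donc ce module libre sur le $O _{\ZZ}$-module libre de m�me base, qui est par d�finition $\D ^{(m)}  _{\ZZ ^{\sharp} \to \X ^{\sharp}}$. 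Comme cet id�al est de type fini, cette r�duction commute � la compl�tion $p$-adique, d'o� l'isomorphisme $O _{\ZZ} \otimes _{O _{\X}} \widehat{\D} ^{(m)} _{\X ^{\sharp}} \riso \widehat{\D} ^{(m)} _{\ZZ ^{\sharp} \to \X ^{\sharp}}$. Il reste alors � incorporer la torsion par le diviseur: on dispose de l'isomorphisme de changement de base $O _{\ZZ} \otimes _{O _{\X}} \widetilde{\B} ^{(m)} _{\X} (T) \riso \widetilde{\B} ^{(m)} _{\ZZ} (U)$ (valable car $U = T \cap Z$ et $f$ se restreint en une �quation de $U$, cf. \cite{Be1}), et l'on conclut la premi�re assertion au niveau $m$ par associativit� des produits tensoriels compl�t�s (proposition \ref{MwidehatotimesN}).

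Pour l'annulation (Th�or�me B), je proc�derais en deux temps. D'abord, au niveau de chaque �paississement $Z _i := \ZZ \otimes _{\V} \V/\pi ^{i+1}$, dont l'espace topologique sous-jacent est $Z$, le faisceau r�duit $\mathcal{F} _i := \widetilde{\D} ^{(m)}  _{\ZZ ^{\sharp} \to \X ^{\sharp}} \otimes _{\V} \V/\pi ^{i+1}$ est un $O _{Z _i}$-module quasi-coh�rent sur le sch�ma affine $Z _i$, donc $H ^{q}(\ZZ, \mathcal{F} _i)=0$ pour $q\geq 1$. Ensuite, on �crit $\widetilde{\D} ^{(m)}  _{\ZZ ^{\sharp} \to \X ^{\sharp}} = \underleftarrow{\lim} _i \, \mathcal{F} _i$. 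Les morphismes de transition �tant surjectifs et les $\mathcal{F} _i$ acycliques, le syst�me projectif $(H ^{0}(\ZZ, \mathcal{F} _i)) _i$ a ses fl�ches de transition surjectives, donc v�rifie la condition de Mittag-Leffler, d'o� l'annulation de $R ^{1} \underleftarrow{\lim}$. La suite exacte de type Milnor
\begin{equation}
\notag
0 \to R ^{1} \underleftarrow{\lim} _{i} \, H ^{q-1}(\ZZ, \mathcal{F} _i) \to H ^{q}(\ZZ, \underleftarrow{\lim} _i \, \mathcal{F} _i) \to \underleftarrow{\lim} _i \, H ^{q}(\ZZ, \mathcal{F} _i) \to 0
\end{equation}
combin�e � l'annulation du premier temps donne alors $H ^{q}(\ZZ, \widetilde{\D} ^{(m)}  _{\ZZ ^{\sharp} \to \X ^{\sharp}})=0$ pour $q\geq 1$.

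Les versions en $\Q$ et de niveau fini s'en d�duisent formellement. La tensorisation par $\Q$ �tant exacte et commutant � $H ^{q}$, on obtient � la fois l'isomorphisme et l'annulation pour $\widetilde{\D} ^{(m)}  _{\ZZ ^{\sharp} \to \X ^{\sharp},\Q}$. Enfin, comme l'espace topologique $Z$ est noeth�rien, le foncteur $H ^{q}(\ZZ, -)$ commute aux limites inductives filtrantes; en passant � la limite sur le niveau $m$ via $\D ^\dag _{\ZZ ^{\sharp} \to \X ^{\sharp},\Q} = \underrightarrow{\lim} _m \widetilde{\D} ^{(m)}  _{\ZZ ^{\sharp} \to \X ^{\sharp},\Q}$, et en utilisant l'exactitude de $\underrightarrow{\lim}$ pour propager l'isomorphisme et l'annulation, on conclut.

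L'obstacle principal me semble �tre le deuxi�me temps de l'annulation, � savoir la commutation de la cohomologie � la limite projective $p$-adique (l'annulation du terme $R ^{1} \underleftarrow{\lim}$): c'est l� qu'interviennent de mani�re essentielle la structure de $\widehat{\D} ^{(m)} _{\ZZ ^{\sharp} \to \X ^{\sharp}}$ comme compl�t� d'un module quasi-coh�rent et la surjectivit� des morphismes de transition. La partie Th�or�me A, en revanche, est de nature plus formelle et repose uniquement sur la libert� de $\D ^{(m)} _{\X ^{\sharp}}$ sur $O _{\X}$ (exactement comme dans l'argument de s�paration de \ref{lim-separe}) et sur le changement de base pour les faisceaux $\widetilde{\B} ^{(m)}$.
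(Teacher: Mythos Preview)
Your argument is correct and matches the paper's approach: the paper's proof simply cites \cite[3.3.2, 3.4.0.1, 3.6.5]{Be1} for the commutation of $\Gamma(\ZZ,-)$ and $H^q(\ZZ,-)$ with projective limits, with $-\otimes_{\Z}\Q$, and with filtered inductive limits, whereas you unpack these into the explicit freeness and Mittag-Leffler/Milnor-sequence arguments. One nitpick: your appeal to Proposition~\ref{MwidehatotimesN} is misplaced, since that result concerns locally convex separated completions rather than $p$-adic ones; the associativity you actually need is the elementary one obtained by reducing modulo $\pi^{i+1}$ and then passing to the projective limit.
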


\begin{proof}
Comme le foncteur $\Gamma (\ZZ, -)$ commute aux limites projectives,
on vérifie que le morphisme canonique 
$O _{\ZZ} \otimes _{O _{\X}}\widetilde{D} ^{(m)}  _{\X ^{\sharp}}
\to 
\widetilde{D} ^{(m)}  _{\ZZ ^{\sharp} \rightarrow \X ^{\sharp}}$
est un isomorphisme. 
Comme le foncteur $\Gamma (\ZZ, -)$ et le produit tensoriel commutent à la tensorisation par $\Q$ (car $\ZZ$ est noethérien)
et aux limites inductives filtrantes de faisceaux sur $\ZZ$, 
on en déduit les deux autres isomorphismes.

On procède de même pour les annulations, 
les propriétés satisfaites par le foncteur
$\Gamma (\ZZ, -)$ que l'on a utilisées étant toujours valables
pour les foncteurs dérivés  $H ^{q} (\ZZ, -)$ (voir \cite[3.3.2, 3.4.0.1 et 3.6.5]{Be1}).
\end{proof}

\begin{nota}
[Images inverses dérivées gauches]
\label{nota-u*-u*}
Soit $E ^{(m)} $ un $\widetilde{D} ^{(m)}  _{\X ^{\sharp},\Q}$-module cohérent à gauche. 
On pose $\E ^{(m)}:= \widetilde{\D} ^{(m)} _{\X^{\sharp},\Q} \otimes _{\widetilde{D} ^{(m)}  _{\X ^{\sharp},\Q}} E ^{(m)} $,
$E:= \smash{D} ^\dag _{\X  ^{\sharp}} (\hdag T) _{\Q} \otimes _{\widetilde{D} ^{(m)}  _{\X ^{\sharp},\Q}} E ^{(m)} $
et
$\E := \smash{\D} ^\dag _{\X ^{\sharp}} (\hdag T) _{\Q}\otimes _{\widetilde{D} ^{(m)}  _{\X ^{\sharp},\Q}} E ^{(m)} $. 
On définit les foncteurs images inverses dérivées gauches $\L u ^*$ en posant
$\L u ^* (E ^{(m)}):= \widetilde{D} ^{(m)}  _{\ZZ ^{\sharp} \rightarrow \X ^{\sharp},\Q} \otimes ^{\L} _{\widetilde{D} ^{(m)}  _{\X ^{\sharp},\Q}}  E ^{(m)}$,
$\L u ^* (\E ^{(m)}):= \widetilde{\D} ^{(m)} _{\ZZ ^{\sharp} \to \X ^{\sharp},\Q} \otimes ^{\L} _{\widetilde{\D} ^{(m)} _{\X ^{\sharp},\Q}}  \E  ^{(m)}$,
$\L u ^* (E):= D ^\dag _{\ZZ ^{\sharp} \to \X ^{\sharp},\Q} \otimes ^{\L} _{\smash{D} ^\dag _{\X  ^{\sharp}} (\hdag T) _{\Q}}  E$
et
$\L u ^* (\E):= \D ^\dag _{\ZZ ^{\sharp} \to \X ^{\sharp},\Q} \otimes ^{\L} _{\smash{\D} ^\dag _{\X ^{\sharp}} (\hdag T) _{\Q}}  \E$.
On remarque que les notations sont justifiées par le lemme \ref{thA-B-D->} et correspondent aux foncteurs $\L u^*$
calculés dans la catégorie des $O _{\X, \Q}$-modules ou $\O _{\X, \Q}$-modules. 
Enfin, d'après les théorèmes de type $A$, 
tous les $\widetilde{\D} ^{(m)} _{\X ^{\sharp},\Q}$-modules cohérents et
les $\smash{\D} ^\dag _{\X ^{\sharp}} (\hdag T) _{\Q}$-modules cohérents sont de cette forme  (voir respectivement \cite[3.4 et 3.6.5]{Be1}).

De même, en remplaçant $? _{\ZZ ^{\sharp} \to \X ^{\sharp},\Q}$ par $?  _{\X ^{\sharp} \leftarrow \ZZ ^{\sharp},\Q}$,
on définit le foncteur $\L u ^{*}$ pour les modules à droite. Par exemple, 
si $E ^{(m)} $ un $\widetilde{D} ^{(m)}  _{\X ^{\sharp},\Q}$-module cohérent à droite,
on pose
$\L u ^* (E ^{(m)}):= E \otimes ^{\L} _{\widetilde{D} ^{(m)}  _{\X ^{\sharp},\Q}}  \widetilde{D} ^{(m)}  _{\X ^{\sharp} \leftarrow \ZZ ^{\sharp},\Q}$.
Enfin, si on veut préciser que l'on a affaire à des modules à gauche (resp. à droite), on pourra
noter $\L u ^* _g$ (resp. $\L u ^* _d$) à la place de $\L u ^*$.
\end{nota}

\begin{lemm}
\label{RGammau*=u*}
Avec les notations et hypothèses de \ref{nota-u*-u*}, 
on dispose alors des isomorphismes canoniques
\begin{equation}
\label{RGammau*=u*-iso}
\L u ^{*} (E^{(m)}) \riso 
\R \Gamma ( \ZZ, \L u ^{*} (\E^{(m)})),
\
\
\L u ^{*} (E) \riso 
\R \Gamma ( \ZZ, \L u ^{*} (\E)).
\end{equation}
\end{lemm}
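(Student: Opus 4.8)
The plan is to reduce both identities to Berthelot's theorems of type $A$ and $B$ on the affine $\X$, combined with the acyclicity recorded in \ref{thA-B-D->}. Since $\X$ is affine, the global sections functor $\Gamma (\X, -)$ identifies coherent $\widetilde{\D} ^{(m)} _{\X ^{\sharp},\Q}$-modules with coherent modules over the ring $\widetilde{D} ^{(m)} _{\X ^{\sharp},\Q}$ of global sections, the quasi-inverse being the sheafification functor $\widetilde{\D} ^{(m)} _{\X ^{\sharp},\Q} \otimes _{\widetilde{D} ^{(m)} _{\X ^{\sharp},\Q}} -$, which is exact on coherent modules, while coherent sheaves are $\Gamma (\X, -)$-acyclic (see \cite[3.4 et 3.6.5]{Be1}). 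I would treat the case of $E ^{(m)}, \E ^{(m)}$ in detail; the case of $E, \E$ is identical after replacing $\widetilde{\D} ^{(m)} _{\X ^{\sharp},\Q}$ by $\smash{\D} ^\dag _{\X ^{\sharp}} (\hdag T) _{\Q}$ and invoking the corresponding vanishing of \ref{thA-B-D->}.

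First I would choose, using that $\widetilde{D} ^{(m)} _{\X ^{\sharp},\Q}$ is noetherian of finite global dimension, a finite resolution $L ^{\bullet} \to E ^{(m)}$ by free $\widetilde{D} ^{(m)} _{\X ^{\sharp},\Q}$-modules of finite type. As $L ^{\bullet}$ is a flat resolution, $\L u ^{*} (E ^{(m)})$ is then represented by the complex $\widetilde{D} ^{(m)} _{\ZZ ^{\sharp} \to \X ^{\sharp},\Q} \otimes _{\widetilde{D} ^{(m)} _{\X ^{\sharp},\Q}} L ^{\bullet}$. Sheafifying termwise produces a bounded complex $\LL ^{\bullet} := \widetilde{\D} ^{(m)} _{\X ^{\sharp},\Q} \otimes _{\widetilde{D} ^{(m)} _{\X ^{\sharp},\Q}} L ^{\bullet}$ of free $\widetilde{\D} ^{(m)} _{\X ^{\sharp},\Q}$-modules of finite type which, by exactness of the sheafification on coherent modules (theorem of type $A$), is a flat resolution of $\E ^{(m)}$; hence $\L u ^{*} (\E ^{(m)})$ is represented by $\widetilde{\D} ^{(m)} _{\ZZ ^{\sharp} \to \X ^{\sharp},\Q} \otimes _{\widetilde{\D} ^{(m)} _{\X ^{\sharp},\Q}} \LL ^{\bullet}$, each term of which is a finite direct sum of copies of $\widetilde{\D} ^{(m)} _{\ZZ ^{\sharp} \to \X ^{\sharp},\Q}$.

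Then I would compute $\R \Gamma (\ZZ, -)$ of this complex degree by degree. This is the crux, and it is where \ref{thA-B-D->} is essential: that lemma gives $\Gamma (\ZZ, \widetilde{\D} ^{(m)} _{\ZZ ^{\sharp} \to \X ^{\sharp},\Q}) = \widetilde{D} ^{(m)} _{\ZZ ^{\sharp} \to \X ^{\sharp},\Q}$ together with $H ^{q} (\ZZ, \widetilde{\D} ^{(m)} _{\ZZ ^{\sharp} \to \X ^{\sharp},\Q}) = 0$ for $q \geq 1$, so every term of $\L u ^{*} (\E ^{(m)})$ is $\Gamma (\ZZ, -)$-acyclic and, the complex being bounded, $\R \Gamma$ is computed by applying $\Gamma (\ZZ, -)$ in each degree. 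This yields $\widetilde{D} ^{(m)} _{\ZZ ^{\sharp} \to \X ^{\sharp},\Q} \otimes _{\widetilde{D} ^{(m)} _{\X ^{\sharp},\Q}} L ^{\bullet}$, i.e. precisely $\L u ^{*} (E ^{(m)})$, with differentials induced functorially from those of $L ^{\bullet}$, which is the first isomorphism of \ref{RGammau*=u*-iso}.

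The main obstacle I anticipate is not any single computation but ensuring that the two exactness inputs are legitimately available: that $\LL ^{\bullet}$ really resolves $\E ^{(m)}$, which rests on the exactness of sheafification on coherent modules (theorem of type $A$ on the affine $\X$), and that the higher cohomology vanishes so that $\R \Gamma$ degenerates to termwise $\Gamma$, which is exactly \ref{thA-B-D->}. Both are granted by the cited results, so the remaining care is purely the bookkeeping of finiteness and boundedness that keeps all complexes in the bounded derived category and makes the functorial matching of differentials automatic.
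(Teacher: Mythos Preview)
Your proposal is correct and follows essentially the same strategy as the paper's proof: take a free resolution of finite type, sheafify it via the theorem of type~$A$ to obtain a flat resolution of the coherent sheaf, and then use the acyclicity statement of \ref{thA-B-D->} to reduce $\R\Gamma(\ZZ,-)$ to termwise $\Gamma(\ZZ,-)$. The only cosmetic difference is that the paper writes out the $\dag$-level case and leaves the level-$m$ case as analogous, whereas you do the reverse; your invocation of finite global dimension to make the resolution bounded is a harmless extra precaution that the paper does not bother with.
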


\begin{proof}
Le premier se traitant de manière analogue, contentons-nous de vérifier le dernier isomorphisme.
Soit $P ^{\bullet}$ une résolution gauche de $E$ par des $\smash{D} ^\dag _{\X  ^{\sharp}} (\hdag T) _{\Q}$-modules libres de type fini. 
Le complexe $\PP ^{\bullet} := \smash{\D} ^\dag _{\X ^{\sharp}} (\hdag T) _{\Q} \otimes _{\smash{D} ^\dag _{\X  ^{\sharp}} (\hdag T) _{\Q}}P ^{\bullet}$ est alors une résolution gauche de $\E$
par des $\smash{\D} ^\dag _{\X ^{\sharp}} (\hdag T) _{\Q}$-modules libres de type fini. 
On dispose alors des isomorphismes caoniques:
\small
\begin{gather}
\notag
D ^\dag _{\ZZ ^{\sharp} \to \X ^{\sharp},\Q} \otimes ^{\L} _{\smash{D} ^\dag _{\X  ^{\sharp}} (\hdag T) _{\Q}}  E
\liso
D ^\dag _{\ZZ ^{\sharp} \to \X ^{\sharp},\Q} \otimes  _{\smash{D} ^\dag _{\X  ^{\sharp}} (\hdag T) _{\Q}}  P ^\bullet 
\riso 
\Gamma ( \ZZ, \D ^\dag _{\ZZ ^{\sharp} \to \X ^{\sharp},\Q} \otimes  _{\smash{D} ^\dag _{\X  ^{\sharp}} (\hdag T) _{\Q}}  P ^\bullet )
\riso
\Gamma ( \ZZ, \D ^\dag _{\ZZ ^{\sharp} \to \X ^{\sharp}} \otimes  _{u ^{-1} \smash{\D} ^\dag _{\X ^{\sharp}} (\hdag T) }  u ^{-1} \PP ^\bullet )
\\
\riso 
\R \Gamma ( \ZZ, \D ^\dag _{\ZZ ^{\sharp} \to \X ^{\sharp},\Q} \otimes  _{u ^{-1} \smash{\D} ^\dag _{\X ^{\sharp}} (\hdag T) _{\Q}}  u ^{-1} \PP ^\bullet )
\riso
\R \Gamma ( \ZZ, \D ^\dag _{\ZZ ^{\sharp} \to \X ^{\sharp},\Q} \otimes ^{\L} _{u ^{-1} \smash{\D} ^\dag _{\X ^{\sharp}} (\hdag T) _{\Q}}  u ^{-1} \E),
\end{gather}
\normalsize
l'avant-dernier isomorphisme résultant du fait que 
les $\D ^\dag _{\ZZ ^{\sharp} \to \X ^{\sharp},\Q}$-modules libres sont $\Gamma ( \ZZ, -)$-acycliques (voir le lemme \ref{thA-B-D->}). 
\end{proof}

\begin{rema}
Comme les $\smash{\D} ^\dag _{\X ^{\sharp}} (\hdag T) _{\Q}$-modules cohérents (resp. les $\widetilde{\D} ^{(m)} _{\X ^{\sharp},\Q}$-modules cohérents) 
vérifient le théorème de type $B$ (voir \cite[3.6.4]{Be1}),
on aurait pu utiliser les résolutions de Koszul pour valider le lemme 
\ref{RGammau*=u*} ci-dessus.
\end{rema}

\subsection{Image directe de niveau $m$: exactitude}
\label{nota-coor-locales}
Pour faire des calculs en coordonnées locales, nous utiliserons les notations et hypothèses suivantes:
on suppose que $\X$ est affine, muni de coordonnées locales 
$t _1,\dots, t _d$ telles $\ZZ := \cap _{i=1} ^{e} V (t _i)$, 
$\mathfrak{D}= V (t _{e+1} \cdots t _{f})$ et 
$u \colon \ZZ  \hookrightarrow \X$ soit l'immersion fermée induite. 

\begin{nota}
Pour tout entier $m \geq 0$,
on pose
$D ^{(m)} _{\ZZ ^{\sharp}}  (U):= \widetilde{B} ^{(m)} _{\ZZ} (U) \otimes _{\O _{\ZZ}} D ^{(m)} _{\ZZ ^{\sharp}} $,
$D ^{(m)}  _{\X ^{\sharp} \leftarrow \ZZ ^{\sharp}}  (U) :=
 D ^{(m)}  _{\X ^{\sharp} \leftarrow \ZZ ^{\sharp}}  \otimes _{\O _{\ZZ}}  \widetilde{B} ^{(m)} _{\ZZ} (U) $.
On munit
$D ^{(m)} _{\ZZ ^{\sharp}}  (U) _{\Q}$ (resp. 
$D ^{(m)} _{\X ^{\sharp} \leftarrow \ZZ ^{\sharp}} (U) _{\Q}$) 
d'une structure de $\widetilde{B} ^{(m)} _{\ZZ} (U) _{\Q}$-algèbre normée dont une base de voisinages de zéro est donnée
par la famille $(p ^{n} D ^{(m)} _{\ZZ ^{\sharp}} (U) ) _{n\in\N}$ (resp. $(p ^{n} D ^{(m)} _{\X ^{\sharp} \leftarrow \ZZ ^{\sharp}} (U)) _{n\in\N}$).
En d'autres termes, 
ce sont les topologies provenant des normes induites via les inclusions 
$D ^{(m)} _{\ZZ ^{\sharp}}  (U) _{\Q} \hookrightarrow 
\widetilde{D} ^{(m)}  _{\ZZ ^{\sharp},\Q}$
et
$D ^{(m)} _{\X ^{\sharp} \leftarrow \ZZ ^{\sharp}} (U) _{\Q}\hookrightarrow 
\widetilde{D} ^{(m)}  _{\X ^{\sharp} \leftarrow \ZZ ^{\sharp},\Q}$. 
De plus, on remarque que le séparé complété de 
$D ^{(m)} _{\ZZ ^{\sharp}}  (U) _{\Q}$ 
(resp. $D ^{(m)} _{\X ^{\sharp} \leftarrow \ZZ ^{\sharp}} (U) _{\Q}$)
est 
$\widetilde{D} ^{(m)}  _{\ZZ ^{\sharp},\Q}$ 
(resp. $\widetilde{D} ^{(m)}  _{\X ^{\sharp} \leftarrow \ZZ ^{\sharp},\Q}$).

\end{nota}

\begin{lemm}
\label{DotimesDhat-se}
Soit $V ' \underset{\phi}{\longrightarrow} V \underset{\psi}{\longrightarrow} V ''$ une suite exacte 
de $\widetilde{D} ^{(m)}  _{\ZZ ^{\sharp},\Q}$-modules de Banach (voir la définition \ref{moduleloccvx} et la topologie canonique de \ref{defi-topo-can}). 
On suppose de plus que $\phi$ et $\psi$ sont des morphismes stricts.
Les suites
\begin{gather}
\label{DotimesDhat-se1}
\widetilde{D} ^{(m)}  _{\X ^{\sharp} \leftarrow \ZZ ^{\sharp},\Q} \otimes _{\widetilde{D} ^{(m)}_{\ZZ ^{\sharp},\Q} } V'
\underset{id \otimes \phi}{\longrightarrow}
\widetilde{D} ^{(m)}  _{\X ^{\sharp} \leftarrow \ZZ ^{\sharp},\Q} \otimes _{\widetilde{D} ^{(m)}_{\ZZ ^{\sharp},\Q} }V
\underset{id \otimes \psi}{\longrightarrow}
\widetilde{D} ^{(m)}  _{\X ^{\sharp} \leftarrow \ZZ ^{\sharp},\Q} \otimes _{\widetilde{D} ^{(m)}_{\ZZ ^{\sharp},\Q} }V'',
\\
\widetilde{D} ^{(m)}  _{\X ^{\sharp} \leftarrow \ZZ ^{\sharp},\Q} \widehat{\otimes} _{\widetilde{D} ^{(m)}_{\ZZ ^{\sharp},\Q} } V'
\underset{id \widehat{\otimes} \phi}{\longrightarrow}
\widetilde{D} ^{(m)}  _{\X ^{\sharp} \leftarrow \ZZ ^{\sharp},\Q} \widehat{\otimes} _{\widetilde{D} ^{(m)}_{\ZZ ^{\sharp},\Q} }V
\underset{id \widehat{\otimes} \psi}{\longrightarrow}
\widetilde{D} ^{(m)}  _{\X ^{\sharp} \leftarrow \ZZ ^{\sharp},\Q} \widehat{\otimes} _{\widetilde{D} ^{(m)}_{\ZZ ^{\sharp},\Q} }V''
 \end{gather}
sont alors exactes et leurs morphismes sont stricts. 
\end{lemm}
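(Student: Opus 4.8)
La strat\'egie repose sur la structure locale du module de transfert. Comme $\X$ est affine, muni de coordonn\'ees avec $\ZZ = \cap _{i=1} ^{e} V(t _i)$, je commencerais par rappeler (d'apr\`es \cite{these_montagnon}, version logarithmique de \cite{Be1}) que $\widetilde{D} ^{(m)} _{\X ^{\sharp} \leftarrow \ZZ ^{\sharp},\Q}$ est, en tant que $\widetilde{D} ^{(m)} _{\ZZ ^{\sharp},\Q}$-module \`a droite, topologiquement libre de base les d\'eriv\'ees normales \`a puissances divis\'ees $\underline{\partial} _{\sharp} ^{[\underline{k}]}$, $\underline{k} \in \N ^{e}$, c'est-\`a-dire canoniquement isomorphe \`a la somme directe $p$-adiquement compl\'et\'ee $c _0 (\N ^{e}, \widetilde{D} ^{(m)} _{\ZZ ^{\sharp},\Q})$ des suites tendant vers z\'ero. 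Comme les anneaux de niveau $m$ sont noeth\'eriens (\cite{Be1}), ce module est de plus plat sur $\widetilde{D} ^{(m)} _{\ZZ ^{\sharp},\Q}$.

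Je me ram\`enerais ensuite \`a des suites exactes courtes strictes. En posant $I:= \mathrm{im}\, \phi = \ker \psi$, le caract\`ere strict de $\phi$ et $\psi$ fournit les suites exactes courtes strictes $V' \twoheadrightarrow I$, $0 \to I \to V \to V/I \to 0$ et $V/I \riso \mathrm{im}\, \psi \hookrightarrow V''$, tous leurs termes restant des espaces de Banach; il suffit alors de montrer que chacun des deux foncteurs transforme une suite exacte courte stricte $0\to A \to B \to C \to 0$ de $\widetilde{D} ^{(m)} _{\ZZ ^{\sharp},\Q}$-modules de Banach en une suite de m\^eme type, puis de recoller par une chasse au diagramme de routine. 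Pour le produit tensoriel compl\'et\'e, la structure libre ci-dessus et le lemme \ref{lemm-topo-tens-prod} donnent l'identification $\widetilde{D} ^{(m)} _{\X ^{\sharp} \leftarrow \ZZ ^{\sharp},\Q} \widehat{\otimes} _{\widetilde{D} ^{(m)} _{\ZZ ^{\sharp},\Q}} V \riso c _0 (\N ^{e}, V)$, et il reste \`a v\'erifier que le foncteur $c _0 (\N ^{e}, -)$ pr\'eserve les suites exactes courtes strictes: l'injectivit\'e et le caract\`ere strict de $c _0 (\N ^{e}, A) \to c _0 (\N ^{e}, B)$ sont composante par composante (les normes co\"incident), tandis que la surjectivit\'e et le caract\`ere strict de $c _0 (\N ^{e}, B) \to c _0 (\N ^{e}, C)$ s'obtiennent en relevant chaque terme d'une suite tendant vers z\'ero de $C$ en un terme de $B$ de norme uniform\'ement major\'ee (th\'eor\`eme de l'application ouverte appliqu\'e au morphisme strict $B \to C$), ce qui pr\'eserve la nullit\'e \`a l'infini.

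Pour le produit tensoriel ordinaire, l'exactitude \`a droite du foncteur $\widetilde{D} ^{(m)} _{\X ^{\sharp} \leftarrow \ZZ ^{\sharp},\Q} \otimes _{\widetilde{D} ^{(m)} _{\ZZ ^{\sharp},\Q}} -$ jointe au lemme \ref{epi-otimes-strict} fournit les \'epimorphismes stricts, et la platitude rappel\'ee plus haut assure l'injectivit\'e des monomorphismes induits, donc l'exactitude au terme central. Le caract\`ere strict de ces monomorphismes se d\'eduit alors du cas compl\'et\'e: le module $\widetilde{D} ^{(m)} _{\X ^{\sharp} \leftarrow \ZZ ^{\sharp},\Q} \otimes _{\widetilde{D} ^{(m)} _{\ZZ ^{\sharp},\Q}} V$ est dense dans son compl\'et\'e $c _0 (\N ^{e}, V)$ pour la topologie induite (par d\'efinition de $\widehat{\otimes}$, voir \ref{defi-widehat-otimes}), les monomorphismes compl\'et\'es \'etant des plongements ferm\'es d'apr\`es l'\'etape pr\'ec\'edente; la restriction aux sous-espaces denses d'un hom\'eomorphisme sur son image reste donc stricte.

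La principale difficult\'e est d'ordre topologique. D'une part, il faut s'assurer que le caract\`ere strict survit \`a la compl\'etion index\'ee par $\N ^{e}$: c'est pr\'ecis\'ement dans le rel\`evement des suites tendant vers z\'ero (deuxi\`eme paragraphe) qu'intervient de mani\`ere essentielle l'hypoth\`ese que $\phi$ et $\psi$ soient stricts, via une majoration uniforme des normes des rel\`evements. D'autre part, pour le produit tensoriel ordinaire, le point d\'elicat est de concilier l'exactitude alg\'ebrique (issue de la platitude) avec la strictitude topologique, ce qui passe par le contr\^ole de la s\'eparation de $\widetilde{D} ^{(m)} _{\X ^{\sharp} \leftarrow \ZZ ^{\sharp},\Q} \otimes _{\widetilde{D} ^{(m)} _{\ZZ ^{\sharp},\Q}} V$ et par sa comparaison avec le produit tensoriel compl\'et\'e.
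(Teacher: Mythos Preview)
Your proposal is correct and follows essentially the same route as the paper: reduce to short exact strict sequences, exploit the explicit topologically-free structure of the transfer module in the normal derivatives to identify the completed tensor product with a $c_0$-type space (the paper does this by first passing through the uncompleted ring $D ^{(m)} _{\X ^{\sharp} \leftarrow \ZZ ^{\sharp}}(U) _{\Q}$ and then invoking Proposition~\ref{MwidehatotimesN}, rather than working directly with $c_0(\N^e,-)$), then use flatness together with \ref{epi-otimes-strict} for the ordinary tensor product and deduce strictness of the injection by comparison with the completed case. One minor correction: at level $m$ the topologically-free basis is given by the $\underline{\partial}^{<\underline{k}>_{(m)}}$ (these are the $\xi _{\underline{k},(m)}$ in the paper), not by the full divided powers $\underline{\partial}_{\sharp}^{[\underline{k}]}$, since the latter need not lie in the level-$m$ integral lattice.
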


\begin{proof}
1) Supposons d'abord que l'on dispose en fait de la suite exacte
$0 \to V ' \underset{\phi}{\longrightarrow} V \underset{\psi}{\longrightarrow} V ''\to 0$.
Pour tout $\underline{k} \in \N ^{e}$, en identifiant $\N ^e$ à un sous-ensemble de $\N ^d$ via l'inclusion
$\underline{k}= (k _1,\dots, k _e) \mapsto  (k _1,\dots, k _e, 0,\dots, 0)$, 
notons $\xi _{\underline{k}, (m)}$ l'image de $\underline{\partial} ^{<\underline{k}> _{(m)}}$ (comme 
$\underline{k} \in \N ^{e}$,
on remarque que
$\underline{\partial} ^{<\underline{k}> _{(m)}} = \underline{\partial} ^{<\underline{k}> _{(m)}} _{\sharp}$)
via la surjection canonique
$D ^{(m)} _{\X ^{\sharp}}  (U) _{\Q}
\twoheadrightarrow
D ^{(m)} _{\X ^{\sharp} \leftarrow \ZZ ^{\sharp}} (U) _{\Q}$.
Les éléments de 
$D ^{(m)} _{\X ^{\sharp} \leftarrow \ZZ ^{\sharp}} (U) _{\Q} \otimes _{D ^{(m)} _{\ZZ ^{\sharp}}  (U) _{\Q}}V$ s'écrivent de manière unique de la forme
$\sum _{\underline{k} \in \N ^{e}} \xi _{\underline{k}, (m)} \otimes x _{\underline{k}}$, la somme étant finie et 
$x _{\underline{k}}\in V$. 
D'après \ref{lemm-topo-tens-prod}, si on note 
$V _0$ le sous-$\V$-module de $V$ des éléments de norme inférieure ou égale à $1$
et $U _0$ le sous-$\V$-module de $D ^{(m)} _{\X ^{\sharp} \leftarrow \ZZ ^{\sharp}} (U) _{\Q} \otimes _{D ^{(m)} _{\ZZ ^{\sharp}}  (U) _{\Q}}V$
engendré par l'image canonique de
$D ^{(m)} _{\X ^{\sharp} \leftarrow \ZZ ^{\sharp}}  (U) \times V _0 \to D ^{(m)} _{\X ^{\sharp} \leftarrow \ZZ ^{\sharp}} (U) _{\Q} \otimes _{D ^{(m)} _{\ZZ ^{\sharp}}  (U) _{\Q}}V$, alors
une base de voisinage de 
$D ^{(m)} _{\X ^{\sharp} \leftarrow \ZZ ^{\sharp}} (U) _{\Q} \otimes _{D ^{(m)} _{\ZZ ^{\sharp}}  (U) _{\Q}}V$
est donnée par la famille
$(p ^{n} U _0) _{n\in\N}$.
On en déduit que 
la topologie canonique sur $D ^{(m)} _{\X ^{\sharp} \leftarrow \ZZ ^{\sharp}} (U) _{\Q} \otimes _{D ^{(m)} _{\ZZ ^{\sharp}}  (U) _{\Q}}V$
est induite par la norme
$\parallel \sum _{\underline{k} \in \N ^{e}} \xi _{\underline{k}, (m)} \otimes x _{\underline{k}} \parallel 
=
\max _{\underline{k}} \parallel x _{\underline{k}}  \parallel $. On a la même description pour $V' $ ou $V''$ à la place de $V$.
On obtient alors la suite exacte courte de $K$-espaces normés
\begin{equation}
\label{DotimesDhat-se11}
0 \to D ^{(m)} _{\X ^{\sharp} \leftarrow \ZZ ^{\sharp}} (U) _{\Q} \otimes _{D ^{(m)} _{\ZZ ^{\sharp}}  (U) _{\Q}} V'
\to 
D ^{(m)} _{\X ^{\sharp} \leftarrow \ZZ ^{\sharp}} (U) _{\Q} \otimes _{D ^{(m)} _{\ZZ ^{\sharp}}  (U) _{\Q}}V
\to
D ^{(m)} _{\X ^{\sharp} \leftarrow \ZZ ^{\sharp}} (U) _{\Q} \otimes _{D ^{(m)} _{\ZZ ^{\sharp}}  (U) _{\Q}}V''
\to 
0,
\end{equation}
dont tous les morphismes sont stricts (pour celle de l'injection, c'est évident d'après la description de leur norme ; pour celle de la surjection, on peut par exemple invoquer
\ref{epi-otimes-strict} ou bien faire le calcul).
Or, d'après \ref{MwidehatotimesN}, comme $V$ est un $\widetilde{D} ^{(m)}  _{\ZZ ^{\sharp},\Q}$-module de Banach,
le morphisme canonique
$D ^{(m)} _{\X ^{\sharp} \leftarrow \ZZ ^{\sharp}} (U) _{\Q} \widehat{\otimes} _{D ^{(m)} _{\ZZ ^{\sharp}}  (U) _{\Q}}V
\to 
\widetilde{D} ^{(m)}  _{\X ^{\sharp} \leftarrow \ZZ ^{\sharp},\Q}
\widehat{\otimes} _{\widetilde{D} ^{(m)}_{\ZZ ^{\sharp},\Q} }V$
est un isomorphisme. 
Comme le foncteur de séparée complétion transforme les suites exactes courtes dont les applications
sont des morphismes stricts de $K$-espaces normés en des suites exactes courtes dont les applications
sont des morphismes stricts, on obtient la suite exacte 
\begin{equation}
\label{DotimesDhat-se12}
0
\to 
\widetilde{D} ^{(m)}  _{\X ^{\sharp} \leftarrow \ZZ ^{\sharp},\Q} \widehat{\otimes} _{\widetilde{D} ^{(m)}_{\ZZ ^{\sharp},\Q} } V'
\to 
\widetilde{D} ^{(m)}  _{\X ^{\sharp} \leftarrow \ZZ ^{\sharp},\Q} \widehat{\otimes} _{\widetilde{D} ^{(m)}_{\ZZ ^{\sharp},\Q} }V
\to
\widetilde{D} ^{(m)}  _{\X ^{\sharp} \leftarrow \ZZ ^{\sharp},\Q} \widehat{\otimes} _{\widetilde{D} ^{(m)}_{\ZZ ^{\sharp},\Q} }V''
\to 0
\end{equation}
dont les morphismes sont stricts. 
 Comme $\widetilde{D} ^{(m)}  _{\X ^{\sharp} \leftarrow \ZZ ^{\sharp},\Q}$ est plat à gauche 
sur 
$\widetilde{D} ^{(m)}  _{\ZZ ^{\sharp},\Q}$, on dispose de la même suite exacte que \ref{DotimesDhat-se12} où l'on remplace 
$\widehat{\otimes} $
par 
$\otimes$.
Le morphisme surjectif de cette dernière suite exacte est strict grâce à \ref{epi-otimes-strict}.
Enfin, le caractère strict du morphisme injectif résulte quant à lui du fait qu'en le composant avec le monomorphisme strict
$\widetilde{D} ^{(m)}  _{\X ^{\sharp} \leftarrow \ZZ ^{\sharp},\Q}\otimes  _{\widetilde{D} ^{(m)}_{\ZZ ^{\sharp},\Q} } V
\to
\widetilde{D} ^{(m)}  _{\X ^{\sharp} \leftarrow \ZZ ^{\sharp},\Q} \widehat{\otimes} _{\widetilde{D} ^{(m)}_{\ZZ ^{\sharp},\Q} } V$,
on obtienne encore
un monomorphisme strict. 
 
 2) En décomposant les suites exactes en suites exactes courtes, on obtient le résultat.

\end{proof}

\begin{lemm}
\label{DwidehatD-inj}
Soient $m' \geq m$ deux entiers, 
$V$ un $\widetilde{D} ^{(m)}  _{\ZZ ^{\sharp},\Q}$-module de Banach
et
$V'$ un $\widetilde{D} ^{(m')} _{\ZZ ^{\sharp},\Q} $-module de Banach (voir la définition \ref{moduleloccvx}). 
Soit $\phi \colon V  \hookrightarrow V '$ un mnomorphisme de $\widetilde{D} ^{(m)}  _{\ZZ ^{\sharp},\Q}$-linéaire.
Le morphisme continu 
$\widetilde{D} ^{(m)}  _{\X ^{\sharp} \leftarrow \ZZ ^{\sharp},\Q} \widehat{\otimes} _{\widetilde{D} ^{(m)}_{\ZZ ^{\sharp},\Q} } V
\to
\widetilde{D} ^{(m')} _{\X ^{\sharp} \leftarrow \ZZ ^{\sharp},\Q} \widehat{\otimes} _{\widetilde{D} ^{(m')}_{\ZZ ^{\sharp},\Q} }V'$
canoniquement induit par $\phi$
est alors injectif.

\end{lemm}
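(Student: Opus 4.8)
The plan is to make the canonical map completely explicit in the coordinate description already obtained in the proof of \ref{DotimesDhat-se}, and then to deduce injectivity from the injectivity of $\phi$ together with the torsion-freeness of the $K$-vector space $V'$. Recall from that proof (via \ref{lemm-topo-tens-prod} and \ref{MwidehatotimesN}) that, since $\ZZ=\cap_{i=1}^{e}V(t_i)$ has codimension $e$, every element of the source $\widetilde{D}^{(m)}_{\X^{\sharp}\leftarrow\ZZ^{\sharp},\Q}\widehat{\otimes}_{\widetilde{D}^{(m)}_{\ZZ^{\sharp},\Q}}V$ is written uniquely as a convergent series $\sum_{\underline{k}\in\N^{e}}\xi_{\underline{k},(m)}\otimes x_{\underline{k}}$ with $x_{\underline{k}}\in V$ tending to $0$, the norm being $\max_{\underline{k}}\|x_{\underline{k}}\|$; and likewise every element of the target is uniquely $\sum_{\underline{k}\in\N^{e}}\xi_{\underline{k},(m')}\otimes x'_{\underline{k}}$ with $x'_{\underline{k}}\in V'$ tending to $0$. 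Here $\xi_{\underline{k},(m)}$ denotes the image of $\underline{\partial}^{<\underline{k}>_{(m)}}$, and these elements form a topological basis of the transfer bimodule over the respective rings.

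Next I would compute the induced map on this basis. Because the normal directions $t_1,\dots,t_e$ are disjoint from the divisor $V(t_{e+1}\cdots t_f)$, the operators $\underline{\partial}^{<\underline{k}>_{(m)}}$ with $\underline{k}\in\N^{e}$ are ordinary (non-logarithmic) divided powers, and Berthelot's level-change formula gives $\underline{\partial}^{<\underline{k}>_{(m)}}=c_{\underline{k}}\,\underline{\partial}^{<\underline{k}>_{(m')}}$ with $c_{\underline{k}}=\prod_{i=1}^{e} q^{(m)}_{k_i}!/q^{(m')}_{k_i}!$, where $q^{(m)}_{k}=\lfloor k/p^{m}\rfloor$. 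Since $m\le m'$ we have $q^{(m)}_{k_i}\ge q^{(m')}_{k_i}$, so each $c_{\underline{k}}$ is a nonzero positive integer. As the level-change morphism commutes with the surjections defining the transfer bimodules, the map of the lemma is therefore $\sum_{\underline{k}}\xi_{\underline{k},(m)}\otimes x_{\underline{k}}\longmapsto\sum_{\underline{k}}\xi_{\underline{k},(m')}\otimes c_{\underline{k}}\phi(x_{\underline{k}})$. This is well defined and continuous: $|c_{\underline{k}}|_p\le 1$ and $\phi$ is bounded, whence $\|c_{\underline{k}}\phi(x_{\underline{k}})\|\le C\|x_{\underline{k}}\|\to 0$.

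Injectivity is then immediate. If the image vanishes, uniqueness of the expansion in the target forces $c_{\underline{k}}\phi(x_{\underline{k}})=0$ for every $\underline{k}$; since $V'$ is a $K$-vector space and $c_{\underline{k}}\neq 0$, this gives $\phi(x_{\underline{k}})=0$, and the injectivity of $\phi$ yields $x_{\underline{k}}=0$ for all $\underline{k}$, i.e.\ the original element is zero. I expect the only real obstacle to be the second step: pinning down that the level-change map is exactly diagonal with nonzero scalar entries in the $\xi$-basis. This rests on Berthelot's explicit description of the behaviour of the divided-power operators $\underline{\partial}^{<\underline{k}>_{(m)}}$ under change of level, and on the observation that the $e$ normal directions carry no logarithmic structure, so that no correction terms mixing distinct multi-indices can appear.
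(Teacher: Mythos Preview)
Your argument is correct and follows essentially the same route as the paper's proof: both use the explicit series description of the completed tensor products from the proof of \ref{DotimesDhat-se}, observe that the level-change map is diagonal in the basis $(\xi_{\underline{k}})$ with nonzero scalar entries, and conclude injectivity from the uniqueness of the expansion together with the injectivity of $\phi$. The only difference is cosmetic: you compute the scalar explicitly as $c_{\underline{k}}=\prod_i q^{(m)}_{k_i}!/q^{(m')}_{k_i}!$, whereas the paper simply asserts the existence of some $\lambda_{\underline{k},(m,m')}\in\V\setminus\{0\}$ with $\xi_{\underline{k},(m)}=\lambda_{\underline{k},(m,m')}\,\xi_{\underline{k},(m')}$.
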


\begin{proof}
Notons $\xi _{\underline{k}, (m')}$ l'image de $\underline{\partial} ^{<\underline{k}> _{(m')}}$ via la surjection canonique
$D ^{(m')} _{\X ^{\sharp}}  (U) _{\Q}
\twoheadrightarrow
D ^{(m')} _{\X ^{\sharp} \leftarrow \ZZ ^{\sharp}} (U) _{\Q}$.
D'après la preuve de \ref{DotimesDhat-se}, 
le $K$-espace de Banach
$\widetilde{D} ^{(m')} _{\X ^{\sharp} \leftarrow \ZZ ^{\sharp},\Q} \widehat{\otimes} _{\widetilde{D} ^{(m')}_{\ZZ ^{\sharp},\Q} }V'$
est le $K$-espace des éléments s'écrivant de manière unique de la forme
$\sum _{\underline{k} \in \N ^{e}} \xi _{\underline{k}, (m')} \otimes x _{\underline{k}}$, la somme étant infinie mais la suite des éléments
$x '_{\underline{k}}\in V'$ tendant vers zéro lorsque $| \underline{k}|$ tend vers l'infini.
Comme $\xi _{\underline{k}, (m)} = \lambda _{\underline{k}, (m,m')} \xi _{\underline{k}, (m')}$, pour un certain 
$ \lambda _{\underline{k}, (m,m')}  \in \V \setminus \{ 0\}$, 
le morphisme 
$\widetilde{D} ^{(m)}  _{\X ^{\sharp} \leftarrow \ZZ ^{\sharp}} (U) _{\Q} \widehat{\otimes} _{\widetilde{D} ^{(m)}_{\ZZ ^{\sharp},\Q} } V
\to
\widetilde{D} ^{(m')} _{\X ^{\sharp} \leftarrow \ZZ ^{\sharp}}  (U) _{\Q}\widehat{\otimes} _{\widetilde{D} ^{(m')}_{\ZZ ^{\sharp},\Q} }V'$
envoie 
$\sum _{\underline{k} \in \N ^{e}} \xi _{\underline{k}, (m)} \otimes x _{\underline{k}} $
sur 
$\sum _{\underline{k} \in \N ^{e}} \xi _{\underline{k}, (m')} \otimes  \lambda _{\underline{k}, (m,m')} \phi (x _{\underline{k}})$.  
D'où le résultat.
\end{proof}

Nous n'aurons pas besoin de la proposition \ref{prop-DotimesDhat-se} mais 
elle résulte immédiatement des lemmes \ref{DotimesDhat-se} et \ref{DwidehatD-inj} que nous utiliserons:
\begin{prop}
\label{prop-DotimesDhat-se}
Soit $0\to V ' \underset{\phi}{\longrightarrow} V \underset{\psi}{\longrightarrow} V ''$ une suite exacte 
de $\widetilde{D} ^{(m)}  _{\ZZ ^{\sharp},\Q}$-modules de Banach (voir la définition \ref{moduleloccvx}). 
On suppose de plus que $\phi$ est un morphisme strict.
La suite 
\begin{gather}
\label{prop-DotimesDhat-se1}
0 
\to 
\widetilde{D} ^{(m)}  _{\X ^{\sharp} \leftarrow \ZZ ^{\sharp},\Q} \widehat{\otimes} _{\widetilde{D} ^{(m)}_{\ZZ ^{\sharp},\Q} } V'
\underset{id \widehat{\otimes} \phi}{\longrightarrow}
\widetilde{D} ^{(m)}  _{\X ^{\sharp} \leftarrow \ZZ ^{\sharp},\Q} \widehat{\otimes} _{\widetilde{D} ^{(m)}_{\ZZ ^{\sharp},\Q} }V
\underset{id \widehat{\otimes} \psi}{\longrightarrow}
\widetilde{D} ^{(m)}  _{\X ^{\sharp} \leftarrow \ZZ ^{\sharp},\Q} \widehat{\otimes} _{\widetilde{D} ^{(m)}_{\ZZ ^{\sharp},\Q} }V''
 \end{gather}
est alors exacte et $id \widehat{\otimes} \phi$ est strict. 
\end{prop}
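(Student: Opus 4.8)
Le plan est de se ramener \`a une suite exacte courte de modules de Banach afin d'appliquer \ref{DotimesDhat-se}, puis d'invoquer \ref{DwidehatD-inj} pour contr\^oler l'exactitude au terme central.

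Comme $\phi$ est un monomorphisme strict de $K$-espaces de Banach, l'application induite $V' \riso \mathrm{im}\, \phi$ (pour la topologie induite) est un hom\'eomorphisme ; l'image $\mathrm{im}\, \phi$ est donc compl\`ete et ferm\'ee dans $V$. Le quotient $\overline{V} := V / \mathrm{im}\, \phi$ muni de la topologie quotient est ainsi un $\widetilde{D} ^{(m)} _{\ZZ ^{\sharp},\Q}$-module de Banach, et l'on dispose de la suite exacte courte
\begin{equation}
\notag
0 \longrightarrow V ' \underset{\phi}{\longrightarrow} V \underset{q}{\longrightarrow} \overline{V} \longrightarrow 0,
\end{equation}
dont tous les morphismes sont stricts ($\phi$ par hypoth\`ese, et $q$ comme surjection canonique munie de la topologie quotient).

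J'appliquerais ensuite \ref{DotimesDhat-se} \`a cette suite exacte courte. On obtient une suite exacte courte apr\`es application du foncteur $\widetilde{D} ^{(m)} _{\X ^{\sharp} \leftarrow \ZZ ^{\sharp},\Q} \widehat{\otimes} _{\widetilde{D} ^{(m)} _{\ZZ ^{\sharp},\Q}} -$, dont les morphismes sont stricts. D'une part, ceci fournit le caract\`ere strict du monomorphisme $id \widehat{\otimes} \phi$, donc l'exactitude au premier terme de \ref{prop-DotimesDhat-se1}. D'autre part, ceci donne l'\'egalit\'e $\ker (id \widehat{\otimes} q) = \mathrm{im}\, (id \widehat{\otimes} \phi)$.

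Il reste \`a \'etablir l'exactitude au terme central, i.e. $\ker (id \widehat{\otimes} \psi) = \mathrm{im}\, (id \widehat{\otimes} \phi)$. Comme $\ker \psi = \mathrm{im}\, \phi$ par hypoth\`ese, le morphisme $\psi$ se factorise en $\psi = \overline{\psi} \circ q$, o\`u $\overline{\psi} \colon \overline{V} \hookrightarrow V ''$ est un monomorphisme $\widetilde{D} ^{(m)} _{\ZZ ^{\sharp},\Q}$-lin\'eaire de modules de Banach. D'apr\`es \ref{DwidehatD-inj} appliqu\'e au cas $m' = m$, le morphisme induit $id \widehat{\otimes} \overline{\psi}$ est injectif. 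Puisque $id \widehat{\otimes} \psi = (id \widehat{\otimes} \overline{\psi}) \circ (id \widehat{\otimes} q)$, j'en d\'eduirais $\ker (id \widehat{\otimes} \psi) = \ker (id \widehat{\otimes} q) = \mathrm{im}\, (id \widehat{\otimes} \phi)$, d'o\`u le r\'esultat. Le point principal \`a soigner est la s\'eparation de $\overline{V}$ (pour qu'il soit bien de Banach), qui repose sur la fermeture de l'image d'un monomorphisme strict de $K$-espaces de Banach ; ce point acquis, tout d\'ecoule m\'ecaniquement des deux lemmes cit\'es.
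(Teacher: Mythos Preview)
Your proof is correct and follows essentially the same approach as the paper: form the quotient $\overline{V}=V/\mathrm{im}\,\phi$ (the paper calls it $W$), apply \ref{DotimesDhat-se} to the resulting short exact sequence, and then use \ref{DwidehatD-inj} with $m'=m$ on the induced monomorphism $\overline{V}\hookrightarrow V''$. The only cosmetic difference is that you justify $\overline{V}$ being Banach via closedness of the image of a strict monomorphism, whereas the paper argues that $W$ is separated because it injects continuously into the separated space $V''$; both are valid.
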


\begin{proof}
En munissant $W := V / V'$ de la topologie quotient, on obtient le morphisme $\widetilde{D} ^{(m)}  _{\ZZ ^{\sharp},\Q}$-linéaire, injectif et continue
$W \hookrightarrow V''$. On en déduit que $W$ est un $\widetilde{D} ^{(m)}  _{\ZZ ^{\sharp},\Q}$-module de Banach, 
car il est un quotient séparé de $V$. On applique alors respectivement les lemmes \ref{DotimesDhat-se} et \ref{DwidehatD-inj}
à la suite exacte
$0 \to V' \to V \to W \to 0$ et au monomorphisme 
$W \to V''$.
\end{proof}

\begin{vide}
Comme $\widetilde{D} ^{(m)}  _{\X ^{\sharp} \leftarrow \ZZ ^{\sharp}} \otimes _{\widetilde{D} ^{(m)}  _{\ZZ ^{\sharp}}} \widetilde{D} ^{(m)}  _{\ZZ ^{\sharp} \to \X ^{\sharp}}$
est séparé pour la topologie $p$-adique, 
le morphisme canonique
$\widetilde{D} ^{(m)}  _{\X ^{\sharp} \leftarrow \ZZ ^{\sharp}} \otimes _{\widetilde{D} ^{(m)}  _{\ZZ ^{\sharp}}} \widetilde{D} ^{(m)}  _{\ZZ ^{\sharp} \to \X ^{\sharp}}
\to
\widetilde{D} ^{(m)}  _{\X ^{\sharp} \leftarrow \ZZ ^{\sharp}} \widehat{\otimes} _{\widetilde{D} ^{(m)}  _{\ZZ ^{\sharp}}} \widetilde{D} ^{(m)}  _{\ZZ ^{\sharp} \to \X ^{\sharp}}$
est injectif. 
On munit 
$\widetilde{D} ^{(m)}  _{\X ^{\sharp} \leftarrow \ZZ ^{\sharp},\Q} \otimes _{\widetilde{D} ^{(m)}  _{\ZZ ^{\sharp},\Q}} \widetilde{D} ^{(m)}  _{\ZZ ^{\sharp} \to \X ^{\sharp},\Q}$
de la topologie produit tensoriel 
définie dans \ref{def-prod-tens}, i.e. 
d'après le lemme \ref{lemm-topo-tens-prod}, c'est la topologie dont une base de voisinages de zéro est donnée par la famille
$p ^n \widetilde{D} ^{(m)}  _{\X ^{\sharp} \leftarrow \ZZ ^{\sharp}} \otimes _{\widetilde{D} ^{(m)}  _{\ZZ ^{\sharp}}} \widetilde{D} ^{(m)}  _{\ZZ ^{\sharp} \to \X ^{\sharp}} $ avec $n $ parcourant $\N$. 
On munit naturellement
$(\widetilde{D} ^{(m)}  _{\X ^{\sharp} \leftarrow \ZZ ^{\sharp}} \widehat{\otimes} _{\widetilde{D} ^{(m)}  _{\ZZ ^{\sharp}}} \widetilde{D} ^{(m)}  _{\ZZ ^{\sharp} \to \X ^{\sharp}} )_\Q$
de la topologie dont une base de voisinages de zéro est donnée par la famille
$p ^n \widetilde{D} ^{(m)}  _{\X ^{\sharp} \leftarrow \ZZ ^{\sharp}} \widehat{\otimes} _{\widetilde{D} ^{(m)}  _{\ZZ ^{\sharp}}} \widetilde{D} ^{(m)}  _{\ZZ ^{\sharp} \to \X ^{\sharp}}$ avec $n $ parcourant $\N$, ce qui en fait un $K$-espace de Banach. 
On obtient alors le monomorphisme strict de $K$-espaces normés
\begin{equation}
\label{alpha-compl}
\widetilde{D} ^{(m)}  _{\X ^{\sharp} \leftarrow \ZZ ^{\sharp},\Q} \otimes _{\widetilde{D} ^{(m)}  _{\ZZ ^{\sharp},\Q}} \widetilde{D} ^{(m)}  _{\ZZ ^{\sharp} \to \X ^{\sharp},\Q}
\hookrightarrow
(\widetilde{D} ^{(m)}  _{\X ^{\sharp} \leftarrow \ZZ ^{\sharp}} \widehat{\otimes} _{\widetilde{D} ^{(m)}  _{\ZZ ^{\sharp}}} \widetilde{D} ^{(m)}  _{\ZZ ^{\sharp} \to \X ^{\sharp}} )_\Q.
\end{equation}
Par construction des séparées complétions de $K$-espaces localement connexes (e.g. voir la preuve de \cite[7.5]{Schneider-NonarchFuncAn}), 
on vérifie que le morphisme \ref{alpha-compl}
se factorise en l'isomorphisme canonique  (indépendant des coordonnées locales) de $K$-espaces de Banach: 
\begin{equation}
\label{alpha-hat}
\widetilde{D} ^{(m)}  _{\X ^{\sharp} \leftarrow \ZZ ^{\sharp},\Q} \widehat{\otimes} _{\widetilde{D} ^{(m)}  _{\ZZ ^{\sharp},\Q}} \widetilde{D} ^{(m)}  _{\ZZ ^{\sharp} \to \X ^{\sharp},\Q}
\riso 
(\widetilde{D} ^{(m)}  _{\X ^{\sharp} \leftarrow \ZZ ^{\sharp}} \widehat{\otimes} _{\widetilde{D} ^{(m)}  _{\ZZ ^{\sharp}}} \widetilde{D} ^{(m)}  _{\ZZ ^{\sharp} \to \X ^{\sharp}} )_\Q.
\end{equation}

\end{vide}

\subsection{Stabilité de la cohérence par foncteur cohomologique local en degré zéro}

\begin{vide}
[Rappels]
Les foncteurs 
$\R \underline{\Gamma} ^{\dag} _{Z}$
et 
$u _+ ^{\sharp (\bullet)}  \circ u ^{\sharp (\bullet) !}
\colon 
\smash{\underrightarrow{LD}} ^{\mathrm{b}} _{\Q ,\mathrm{qc}}
(\widetilde{\D} _{\X ^\sharp } ^{(\bullet)})
\to
\smash{\underrightarrow{LD}} ^{\mathrm{b}} _{\Q ,\mathrm{qc}}
(\widetilde{\D} _{\X ^\sharp } ^{(\bullet)})
$
sont isomorphes (voir \cite[5.3.8]{caro-stab-sys-ind-surcoh}).
On dispose aussi du foncteur 
$\R \underline{\Gamma} ^{\dag} _{Z}\colon D ^\mathrm{b} _\mathrm{coh} ( \smash{\D} ^\dag _{\X ^{\sharp}} (\hdag T) _{\Q} )
\to 
D ^\mathrm{b}  ( \smash{\D} ^\dag _{\X ^{\sharp}} (\hdag T) _{\Q} )$
défini de telle sorte que l'on ait l'isomorphisme de foncteurs 
$\R \underline{\Gamma} ^{\dag} _{Z} \circ \underrightarrow{\lim} 
\riso \underrightarrow{\lim} \circ \R \underline{\Gamma} ^{\dag} _{Z}
\colon 
\smash{\underrightarrow{LD}} ^{\mathrm{b}} _{\Q ,\mathrm{coh}}
(\widetilde{\D} _{\X ^\sharp } ^{(\bullet)})
\to
D ^\mathrm{b}  ( \smash{\D} ^\dag _{\X ^{\sharp}} (\hdag T) _{\Q} )$,
où
$\underrightarrow{\lim}$
désigne  
désigne le foncteur canonique 
$\underrightarrow{\lim}  \colon
\smash{\underrightarrow{LD}} ^{\mathrm{b}} _{\Q ,\mathrm{qc}}
(\widetilde{\D} _{\X ^\sharp } ^{(\bullet)})
\to
D ^\mathrm{b}  ( \smash{\D} ^\dag _{\X ^{\sharp}} (\hdag T) _{\Q} )$
qui induit l'équivalence de catégories
$\underrightarrow{\lim}  \colon
\smash{\underrightarrow{LD}} ^{\mathrm{b}} _{\Q ,\mathrm{coh}}
(\widetilde{\D} _{\X ^\sharp } ^{(\bullet)})
\cong
D ^\mathrm{b} _\mathrm{coh} ( \smash{\D} ^\dag _{\X ^{\sharp}} (\hdag T) _{\Q} )$
(voir \cite[2]{caro-stab-sys-ind-surcoh} ou \cite[4.2.4]{Beintro2} énoncé sans structure logarithmique).
\end{vide}

Pour établir la proposition qui suit, nous aurons besoin des deux lemmes suivants:
\begin{lemm}
[Berthelot-Kashiwara]
\label{BK-cohloc}
Soit $\E $ un $\smash{\D} ^\dag _{\X ^{\sharp}} (\hdag T) _{\Q}$-module cohérent à support dans $\ZZ$.
\begin{enumerate}
\item Le complexe $u ^{!\sharp} (\E)$ est isomorphe à $\mathcal{H} ^{0}u ^{!\sharp} (\E)$, ce dernier étant un $\smash{\D} ^\dag _{\ZZ ^\sharp} (\hdag U) _{\Q}$-module cohérent.
De plus, 
le morphisme canonique
$u _+ ^\sharp  \circ u ^{!\sharp} (\E) \to \E$ est un isomorphisme. 

\item Le morphisme canonique
$\R \underline{\Gamma} ^{\dag} _{Z} (\E) \to \E$
est un isomorphisme.
\end{enumerate}

\end{lemm}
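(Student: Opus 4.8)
The plan is to deduce assertion (2) from assertion (1) and to focus on (1). Under the functorial isomorphism $\R \underline{\Gamma} ^{\dag} _{Z} \riso u _+ ^{\sharp} \circ u ^{!\sharp}$ recalled just above the lemma, the canonical morphism $\R \underline{\Gamma} ^{\dag} _{Z} (\E) \to \E$ is identified with the canonical (adjunction) morphism $u _+ ^{\sharp} \circ u ^{!\sharp} (\E) \to \E$; hence as soon as (1) shows the latter is an isomorphism, so is the former. Since $u _+ ^{\sharp}$, $u ^{!\sharp}$ and the support condition are all local on $\X$, I would first reduce to the affine situation with local coordinates $t _1, \dots, t _d$ such that $\ZZ = \cap _{i=1} ^{e} V (t _i)$ and $\mathfrak{D} = V (t _{e+1} \cdots t _{f})$; note that $t _1, \dots, t _e$ are then transverse to the logarithmic structure, which is exactly the exactness of $u ^{\sharp}$.

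Next I would lower the codimension $e$ to $1$. Set $\ZZ _i := \cap _{j=1} ^{i} V (t _j)$ (with $\ZZ _0 := \X$) and let $u _i \colon \ZZ _i \hookrightarrow \ZZ _{i-1}$ be the induced exact closed immersion, each $\ZZ _i$ carrying the induced logarithmic structure and the divisor $U _i := T \cap Z _i$. By the transitivity of $u _+ ^{\sharp}$ and $u ^{!\sharp}$, and since a coherent module supported in $\ZZ$ is a fortiori supported in $\ZZ _1$, the codimension-one case applied to $u _1$ produces a coherent $\smash{\D} ^\dag _{\ZZ _1 ^{\sharp}} (\hdag U _1) _{\Q}$-module $u _1 ^{!\sharp} (\E)$, concentrated in degree $0$, supported in $\ZZ$, and with $u _{1+} ^{\sharp} u _1 ^{!\sharp} (\E) \riso \E$. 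Applying the inductive hypothesis to $u _1 ^{!\sharp} (\E)$ on $\ZZ _1$ then yields (1) for $\E$. It therefore suffices to treat $e = 1$, say $\ZZ = V (t _1)$.

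For the codimension-one case the plan is to run the arithmetic analogue of Kashiwara's argument. Writing $\E _0 := \ker (t _1 \colon \E \to \E)$, I would establish the divided-power decomposition $\E \riso \bigoplus _{k \geq 0} \partial _1 ^{[k]} \E _0$, prove that $\E _0$ is a coherent $\smash{\D} ^\dag _{\ZZ ^{\sharp}} (\hdag U) _{\Q}$-module, and identify it with $\mathcal{H} ^{0} u ^{!\sharp} (\E)$. The complex $u ^{!\sharp} (\E)$ is computed, up to shift, by the two-term Koszul complex $[\E \xrightarrow{\,t _1\,} \E]$; the support hypothesis makes $t _1$ bijective on the complement of $\E _0$ in the above decomposition, so this complex has cohomology only in degree $0$, equal to $\E _0$, which gives the first half of (1). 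Finally $u _+ ^{\sharp} u ^{!\sharp} (\E)$ is computed as $\bigoplus _k \partial _1 ^{[k]} \otimes \E _0$ and the canonical morphism to $\E$ is precisely the decomposition map, hence an isomorphism.

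The hard part will be the codimension-one decomposition itself in the overconvergent setting: one has to control the convergence of $\bigoplus _{k \geq 0} \partial _1 ^{[k]} \E _0$ and the coherence of $\E _0$ over $\smash{\D} ^\dag _{\ZZ ^{\sharp}} (\hdag U) _{\Q}$, where the divided powers $\partial _1 ^{[k]}$ and the overconvergent singularities along $T$ prevent a naive finite-rank argument. A convenient way around this is to carry out the decomposition first at each finite level $m$ for the Banach $\widetilde{\D} ^{(m)} _{\X ^{\sharp},\Q}$-coherent modules, where the Berthelot-Kashiwara statement is classical, and then to pass to the inductive limit over $m$ through the equivalence $\underrightarrow{\lim}$ recalled above, using the exactness and strictness of the push-forward operation provided by \ref{DotimesDhat-se} and \ref{DwidehatD-inj}; alternatively one simply invokes the known logarithmic Berthelot-Kashiwara theorem with overconvergent singularities.
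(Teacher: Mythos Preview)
Your treatment of assertion (1) is reasonable: the paper simply invokes it as Berthelot's arithmetic version of Kashiwara's theorem without reproving it, so your sketch (reduction to codimension one, divided-power decomposition) is more than what is needed here, though the overconvergent details you flag are indeed nontrivial.

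The deduction of (2) from (1), however, has a genuine gap that touches the very subtlety the paper is built around. The isomorphism recalled just before the lemma is $\R\underline{\Gamma}^{\dag}_{Z} \riso u_+^{\sharp(\bullet)} \circ u^{\sharp(\bullet)!}$ on $\smash{\underrightarrow{LD}}^{\mathrm{b}}_{\Q,\mathrm{qc}}(\widetilde{\D}_{\X^{\sharp}}^{(\bullet)})$, \emph{not} an isomorphism $\R\underline{\Gamma}^{\dag}_{Z} \riso u_+^{\sharp} \circ u^{!\sharp}$ on $D^{\mathrm{b}}_{\mathrm{coh}}(\smash{\D}^\dag_{\X^{\sharp}}(\hdag T)_{\Q})$. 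Passing from the former to the latter requires $u^{\sharp(\bullet)!}(\E^{(\bullet)})$ to lie in $\smash{\underrightarrow{LD}}^{\mathrm{b}}_{\Q,\mathrm{coh}}$, and the mere coherence of $u^{!\sharp}(\E)$ at the $\D^\dag$-level given by (1) does not a priori yield this; that is exactly the content of Remarque~\ref{rema-3.4.10}. So identifying $\R\underline{\Gamma}^{\dag}_{Z}(\E) \to \E$ with the adjunction map $u_+^{\sharp} u^{!\sharp}(\E) \to \E$ is not justified by what is available at this point.

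The paper proves (2) by a different, self-contained argument that avoids this issue entirely. Working locally with $Z = \cap_{i=1}^{e} V(t_i)$ and setting $\ZZ_i := V(t_i)$, one observes that $\E(\hdag Z_i)$ is a coherent $\D^\dag_{\X^{\sharp}}(\hdag Z_i \cup T)_{\Q}$-module vanishing outside $Z_i$, hence zero; likewise each $\E(\hdag Z_i \cup Z_j)$ vanishes. The Mayer--Vietoris localization triangles then force $\E(\hdag Z_i \cap Z_j) = 0$, and iterating gives $\E(\hdag Z) = 0$. The localization triangle in $Z$ then yields $\R\underline{\Gamma}^{\dag}_{Z}(\E) \riso \E$. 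This route uses only the coherent theory over enlarged divisors and sidesteps any comparison between the $(\bullet)$-level and $\D^\dag$-level push-pull functors.
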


\begin{proof}
La première assertion est exactement la version arithmétique de Berthelot du théorème de Kashiwara. 
La seconde assertion étant locale, on peut supposer que nous sommes dans la situation de \ref{nota-coor-locales} dont nous reprenons les notations.
Pour $i = 1, \dots, e$, notons $\ZZ _i:= V (t _i)$.
Comme $\E (\hdag Z _i)$ est un $\D ^\dag _{\X ^{\sharp}} (\hdag Z _i \cup T) _\Q$-module cohérent nul en dehors de $\ZZ _i$, 
ce dernier est nul. De même, pour $1\leq i,j\leq e$, 
comme $\E (\hdag Z _i \cup Z _j )$ est un $\D ^\dag _{\X ^{\sharp}} (\hdag Z _i \cup Z _j  \cup T) _\Q$-module cohérent nul en dehors de $Z _i \cup Z _j $, 
ce dernier est nul. Or, via le triangle de localisation de Mayer-Vietoris (voir \cite[4.4.5]{caro-stab-sys-ind-surcoh}),
on en déduit que 
$\E (\hdag Z _i \cap Z _j )=0$. En réitérant le procédé, on obtient que 
$\E (\hdag Z  )=0$. On conclut grâce au triangle distingué de localisation en $Z$ de $\E$.

\end{proof}

\begin{lemm}
\label{H0Zinj}
Soient $Y$ un sous-schéma fermé de $X$, 
$\E $ un $\smash{\D} ^\dag _{\X ^{\sharp}} (\hdag T) _{\Q}$-module cohérent.
Le morphisme canonique
$\mathcal{H} ^{\dag 0} _{Y} (\E)
\to 
\E$
est injectif.
\end{lemm}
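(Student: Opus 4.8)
The plan is to reduce the statement to the vanishing of a single cohomology sheaf, and then to compute that sheaf by d\'evissage to the case of a divisor, where the localized object is concentrated in degree $0$. The only serious input will be the fact, already used in \ref{BK-cohloc}, that localizing a coherent module along a single divisor produces a coherent module (i.e. an object concentrated in degree $0$); the rest is Mayer--Vietoris bookkeeping.

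First I would use the distinguished triangle of localization along $Y$ (the one invoked at the end of the proof of \ref{BK-cohloc}),
\[
\R \underline{\Gamma} ^{\dag} _{Y} (\E) \to \E \to \E (\hdag Y) \xrightarrow{+1}.
\]
Passing to the long exact sequence of cohomology sheaves and using that $\E$ is a module, so that $\mathcal{H} ^{-1}(\E) = 0$, one obtains an exact sequence $0 \to \mathcal{H} ^{-1}(\E (\hdag Y)) \to \mathcal{H} ^{\dag 0} _{Y}(\E) \to \E$, where $\mathcal{H} ^{\dag 0} _{Y}(\E) = \mathcal{H} ^{0} \R \underline{\Gamma} ^{\dag} _{Y}(\E)$. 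Thus the canonical morphism $\mathcal{H} ^{\dag 0} _{Y}(\E) \to \E$ is injective if and only if $\mathcal{H} ^{-1}(\E (\hdag Y)) = 0$, and it suffices to prove this vanishing. Since injectivity of a morphism of sheaves is local, I would then work locally and assume $\X$ affine as in \ref{nota-coor-locales}: the reduced closed subscheme $Y$ is cut out by finitely many sections $g _1, \dots, g _r$, so $Y = D _1 \cap \cdots \cap D _r$ with each $D _i = V(g _i)$ a divisor of $X$ (confusing divisors with their supports, as agreed). The two elementary facts I will exploit are that any union $D _{i_1} \cup \cdots \cup D _{i_s}$ is again a divisor (defined by the product of the $g _{i_j}$) and that intersection distributes over union.

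Next I would prove, by induction on $k$, that for any finite family of divisors $D _1, \dots, D _k$ the complex $\E (\hdag D _1 \cap \cdots \cap D _k)$ is concentrated in nonnegative degrees; the case $k = r$ then yields $\mathcal{H} ^{-1}(\E (\hdag Y)) = 0$. For $k = 1$, $\E (\hdag D _1)$ is a coherent $\D ^{\dag} _{\X ^\sharp}(\hdag T \cup D _1) _{\Q}$-module, hence concentrated in degree $0$ (exactly the fact used in \ref{BK-cohloc}). For the inductive step, I write $\bigcap _{i=1} ^{k} D _i = (\bigcap _{i=1} ^{k-1} D _i) \cap D _k$ and use the Mayer--Vietoris triangle \cite[4.4.5]{caro-stab-sys-ind-surcoh}
\[
\E(\hdag {\textstyle\bigcap _{i=1} ^{k}} D _i) \to \E(\hdag {\textstyle\bigcap _{i=1} ^{k-1}} D _i) \oplus \E(\hdag D _k) \to \E(\hdag ({\textstyle\bigcap _{i=1} ^{k-1}} D _i) \cup D _k) \xrightarrow{+1}.
\]
By distributivity, $(\bigcap _{i=1} ^{k-1} D _i) \cup D _k = \bigcap _{i=1} ^{k-1}(D _i \cup D _k)$ is an intersection of $k-1$ divisors, so its localization is concentrated in nonnegative degrees by the induction hypothesis; the same holds for $\E(\hdag \bigcap _{i=1} ^{k-1} D _i)$ and for $\E(\hdag D _k)$. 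The long exact sequence attached to the triangle then forces $\E(\hdag \bigcap _{i=1} ^{k} D _i)$ to be concentrated in nonnegative degrees as well, since in any degree $n < 0$ the two neighbouring terms vanish.

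The main point to watch is getting the degree conventions in the localization triangle right, so that the relevant neighbouring cohomology sheaves genuinely vanish in negative degrees; beyond that, the argument rests only on the single-divisor statement (equivalently, the exactness of divisor localization coming from Berthelot's flatness results) together with the Mayer--Vietoris d\'evissage, and no new hard estimate is needed.
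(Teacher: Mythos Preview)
Your proof is correct and follows essentially the same route as the paper: reduce via the localization triangle to the vanishing of $\mathcal{H}^{-1}(\E(\hdag Y))$, handle the divisor case by exactness of $(\hdag Y)$ on coherent modules, and then induct on the number of divisors cutting out $Y$ using the Mayer--Vietoris triangles of \cite[4.4.5]{caro-stab-sys-ind-surcoh}. Your inductive statement (concentration in nonnegative degrees) and the distributivity bookkeeping make explicit what the paper leaves implicit, but the argument is the same.
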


\begin{proof}
Via le triangle de localisation de $\E$ par rapport à $Y$, il s'agit d'établir 
$\mathcal{H} ^{-1} (\E (\hdag Y) ) =0$. Lorsque $Y$ est un diviseur, 
cela résulte de l'exactitude du foncteur $(\hdag Y)$ sur la catégorie des
$\smash{\D} ^\dag _{\X ^{\sharp}} (\hdag T) _{\Q}$-modules cohérents.
On en déduit le cas général via les triangles distingués de Mayer-Vietoris (voir \cite[4.4.5]{caro-stab-sys-ind-surcoh})
en procédant par récurrence
sur le nombre minimal de diviseurs dont l'intersection donne $Y$.
\end{proof}

\begin{prop}
\label{stab-coh-loc-H0}
Soit $\E $ un $\smash{\D} ^\dag _{\X ^{\sharp}} (\hdag T) _{\Q}$-module cohérent.
Alors, $\mathcal{H} ^{\dag 0} _{Z} (\E)$ est un $\smash{\D} ^\dag _{\X ^{\sharp}} (\hdag T) _{\Q}$-module cohérent
si et seulement si 
$\mathcal{H} ^{0}u ^{!\sharp} (\E)$ est un $\smash{\D} ^\dag _{\ZZ ^\sharp} (\hdag U) _{\Q}$-module cohérent.
Si l'une de ces conditions est satisfaite, on a alors 
$u ^{\sharp} _{+}\mathcal{H} ^{0} u ^{!\sharp} (\E) \riso \mathcal{H} ^{\dag 0} _{Z} (\E)$.
\end{prop}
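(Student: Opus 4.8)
The plan is to reduce the whole statement to the single identity
$$u ^{\sharp} _{+} \mathcal{H} ^{0} u ^{!\sharp} (\E) \riso \mathcal{H} ^{\dag 0} _{Z} (\E),$$
valid a priori in the abelian category of (not necessarily coherent) $\smash{\D} ^\dag _{\X ^{\sharp}} (\hdag T) _{\Q}$-modules. To obtain it I would start from the isomorphism of functors $\R \underline{\Gamma} ^{\dag} _{Z} \riso u ^{\sharp} _{+} \circ u ^{!\sharp}$ recalled just before Lemma \ref{BK-cohloc} (the $\smash{\D} ^\dag$-version, compatible with $\underrightarrow{\lim}$, of \cite[5.3.8]{caro-stab-sys-ind-surcoh}), and apply $\mathcal{H} ^{0}$. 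The decisive point is that, $u ^\sharp$ being a closed immersion, $u ^{\sharp} _{+}$ is exact: level by level it is $u _*$ of the tensor product by the transfer bimodule $\widetilde{\D} ^{(m)} _{\X ^{\sharp} \leftarrow \ZZ ^{\sharp}}$, which is flat over $\widetilde{\D} ^{(m)} _{\ZZ ^{\sharp}}$ (as used in the proof of \ref{DotimesDhat-se}), and $u _*$ is exact for a closed immersion; exactness survives $\otimes \Q$ and $\underrightarrow{\lim}$. An exact functor commutes with $\mathcal{H} ^{0}$, whence the displayed identity. In particular this already yields the asserted formula of the proposition whenever one of the two coherence hypotheses holds.

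From this identity I would first deduce the easy implication. If $\M := \mathcal{H} ^{0} u ^{!\sharp} (\E)$ is a coherent $\smash{\D} ^\dag _{\ZZ ^\sharp} (\hdag U) _{\Q}$-module, then, $u ^\sharp$ being a closed immersion, $u ^{\sharp} _{+}$ preserves coherence, so the right-hand side of the identity, hence $\mathcal{H} ^{\dag 0} _{Z} (\E)$, is a coherent $\smash{\D} ^\dag _{\X ^{\sharp}} (\hdag T) _{\Q}$-module.

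For the converse I would argue by bootstrapping through Berthelot--Kashiwara. Suppose $\mathcal{H} ^{\dag 0} _{Z} (\E)$ is coherent, and set $\NN := u ^{\sharp} _{+} \M$, so that by the identity $\NN \riso \mathcal{H} ^{\dag 0} _{Z} (\E)$ is a coherent $\smash{\D} ^\dag _{\X ^{\sharp}} (\hdag T) _{\Q}$-module supported in $Z$. Lemma \ref{BK-cohloc} applied to $\NN$ then shows that $u ^{!\sharp} (\NN)$ is concentrated in degree $0$ and is $\smash{\D} ^\dag _{\ZZ ^\sharp} (\hdag U) _{\Q}$-coherent. It remains to identify $u ^{!\sharp} (\NN) = u ^{!\sharp} u ^{\sharp} _{+} (\M)$ with $\M$, which is the formal half of Berthelot--Kashiwara, namely the natural isomorphism $u ^{!\sharp} \circ u ^{\sharp} _{+} \riso \mathrm{id}$; for a closed immersion this holds on all modules, since it reduces level by level to the flatness of the transfer bimodules and uses no finiteness hypothesis. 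Hence $\M = \mathcal{H} ^{0} u ^{!\sharp} (\E)$ is coherent, as wanted.

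The main obstacle is precisely the necessity of having both the exactness of $u ^{\sharp} _{+}$ and the unit isomorphism $u ^{!\sharp} u ^{\sharp} _{+} \riso \mathrm{id}$ available on the full category of modules rather than only on coherent ones: the object $\M = \mathcal{H} ^{0} u ^{!\sharp} (\E)$ whose coherence is in question is not known to be coherent a priori, and it is exactly the absence of any finiteness assumption in these two closed-immersion facts that lets one transfer the coherence of $\NN = u ^{\sharp} _{+} \M$ back to $\M$ via Lemma \ref{BK-cohloc}.
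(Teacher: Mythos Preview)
Your strategy hinges on the identity $u^\sharp_+ \mathcal{H}^0 u^{!\sharp}(\E) \cong \mathcal{H}^{\dag 0}_Z(\E)$ holding \emph{before} any coherence is known, by commuting $\mathcal{H}^0$ past an exact $u^\sharp_+$. But this requires a meaning for $u^\sharp_+$ on an arbitrary $\smash{\D}^\dag_{\ZZ^\sharp}(\hdag U)_\Q$-module, and your ``level by level'' justification does not supply one: a general $\D^\dag$-module has no canonical decomposition into levels, so the phrase is undefined there. What you would actually need is that the naive functor $u_*(\D^\dag_{\X^\sharp\leftarrow\ZZ^\sharp,\Q}\otimes_{\D^\dag_{\ZZ^\sharp}}-)$ is exact (flatness of the transfer bimodule at the $\D^\dag$ level, which is \emph{not} the same as flatness at each level $m$ and is nowhere established), \emph{and} that applying it to $u^{!\sharp}(\E)$ recovers $\R\underline{\Gamma}^\dag_Z(\E)$ as the paper defines it, namely via $\underrightarrow{\lim}$ from the $\smash{\underrightarrow{LD}}^{\mathrm b}_{\Q,\mathrm{qc}}$ level. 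This compatibility is precisely what Remarque~\ref{rema-3.4.10} singles out as unclear; it is the central subtlety the whole paper is organised around, and you cannot simply assume it. The same gap reappears in your converse, where you invoke $u^{!\sharp}u^\sharp_+\cong\mathrm{id}$ on the not-yet-coherent $\M$.

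The paper's proof sidesteps all of this. It treats only the direction where $\mathcal{H}^0 u^{!\sharp}(\E)$ is assumed coherent (the converse is the easier one: from $\E/\mathcal{H}^{\dag 0}_Z(\E)\hookrightarrow\E(\hdag Z)$ and $u^{!\sharp}(\E(\hdag Z))=0$ one gets $\mathcal{H}^0 u^{!\sharp}(\mathcal{H}^{\dag 0}_Z(\E))\riso\mathcal{H}^0 u^{!\sharp}(\E)$, and Lemme~\ref{BK-cohloc} gives coherence of the left side). In the hard direction it constructs the adjunction map $\phi\colon u^\sharp_+\mathcal{H}^0 u^{!\sharp}(\E)\to\E$ between \emph{coherent} modules, proves injectivity of $\phi$ via Berthelot--Kashiwara applied to $\ker\phi$, factors $\phi$ through $\mathcal{H}^{\dag 0}_Z(\E)$ by Lemme~\ref{BK-cohloc} and Lemme~\ref{H0Zinj}, and then shows surjectivity onto $\mathcal{H}^{\dag 0}_Z(\E)$ by a local maximality argument: any strictly larger coherent submodule $\G\subset\E$ supported on $Z$ would have $\mathcal{H}^0 u^{!\sharp}(\G)\riso\mathcal{H}^0 u^{!\sharp}(\E)$, whence by Berthelot--Kashiwara $\G=u^\sharp_+\mathcal{H}^0 u^{!\sharp}(\E)$, a contradiction. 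No step ever applies $u^\sharp_+$ or $u^{!\sharp}$ to a non-coherent object.
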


\begin{proof}
Supposons que $\mathcal{H} ^{0}u ^{!\sharp} (\E)$ est un $\smash{\D} ^\dag _{\ZZ ^\sharp} (\hdag U) _{\Q}$-module cohérent.
On dispose alors du morphisme canonique de $\smash{\D} ^\dag _{\X ^{\sharp}} (\hdag T) _{\Q}$-modules cohérents de la forme
$\phi \colon u ^{\sharp} _{+}\mathcal{H} ^{0} u ^{!\sharp} (\E) \to \E$. 
Comme le noyau de $\phi$ est un $\smash{\D} ^\dag _{\X ^{\sharp}} (\hdag T) _{\Q}$-module cohérent à support dans $Z$, comme $\mathcal{H} ^{0} u ^{!\sharp}$ est exact à gauche (sur la catégorie des $\smash{\D} ^\dag _{\X ^{\sharp}} (\hdag T) _{\Q}$-modules cohérents) 
et comme le noyau de 
$\mathcal{H} ^{0} u ^{!\sharp}(\phi ) $ est nul, on déduit alors du théorème de Berthelot-Kashiwara 
(voir la première partie de \ref{BK-cohloc}) 
que $\phi$ est injectif. 

Comme le morphisme canonique  
$\mathcal{H} ^{\dag 0} _{Z} (u ^{\sharp} _{+}\mathcal{H} ^{0} u ^{!\sharp} (\E)) 
\to u ^{\sharp} _{+}\mathcal{H} ^{0} u ^{!\sharp} (\E)$ est un isomorphisme
(voir la seconde partie de \ref{BK-cohloc}) , 
on en déduit que l'injection canonique
$u ^{\sharp} _{+}\mathcal{H} ^{0} u ^{!\sharp} (\E) \hookrightarrow \E$
se factorise par 
les inclusions
$u ^{\sharp} _{+}\mathcal{H} ^{0} u ^{!\sharp} (\E) 
\hookrightarrow 
\mathcal{H} ^{\dag 0} _{Z} (\E)
\hookrightarrow  \E$
(la seconde flèche est bien injective grâce à \ref{H0Zinj}).
Par l'absurde, l'inclusion
$u ^{\sharp} _{+}\mathcal{H} ^{0} u ^{!\sharp} (\E) 
\hookrightarrow 
\mathcal{H} ^{\dag 0} _{Z} (\E)$
n'est pas un isomorphisme. 
Dans ce cas, il existe un ouvert affine $\U$ de $\X$, une section $s$ sur $\U$
de $\mathcal{H} ^{\dag 0} _{Z} (\E)$ qui n'est pas une section sur $\U$ de 
$u ^{\sharp} _{+}\mathcal{H} ^{0} u ^{!\sharp} (\E) $. 
Soit $\G$ le sous-$\smash{\D} ^\dag _{\U^{\sharp}} (\hdag T\cap U) _{\Q}$-module de 
$\mathcal{H} ^{\dag 0} _{Z} (\E) |\U$ engendré par 
$u ^{\sharp} _{+}\mathcal{H} ^{0} u ^{!\sharp} (\E) |\U$ et par la section $s$.
Comme le faisceau associé à un sous-préfaisceau d'un faisceau contient le sous-préfaisceau,
$\G$ contient strictement $u ^{\sharp} _{+}\mathcal{H} ^{0} u ^{!\sharp} (\E) |\U$. De plus, $\G$ est à support dans $\U \cap Z$ car
$\mathcal{H} ^{\dag 0} _{Z} (\E) |\U$ l'est. 
Comme $\G$ est un sous-$\smash{\D} ^\dag _{\U^{\sharp}} (\hdag T\cap U) _{\Q}$-module de type fini du $\smash{\D} ^\dag _{\U^{\sharp}} (\hdag T\cap U) _{\Q}$-module 
cohérent $\E |\U$, $\G$ est alors un sous-$\smash{\D} ^\dag _{\U^{\sharp}} (\hdag T\cap U) _{\Q}$-module cohérent. 
Or, en appliquant le foncteur exact à gauche $\mathcal{H} ^{0} u ^{!\sharp}$ aux inclusions
$u ^{\sharp} _{+}\mathcal{H} ^{0} u ^{!\sharp} (\E) |\U \hookrightarrow \G \hookrightarrow \E |\U$, 
on obtient 
les isomorphismes
$\mathcal{H} ^{0} u ^{!\sharp} (\E) |\U 
\riso \mathcal{H} ^{0} u ^{!\sharp} (\G) |\U
\riso 
\mathcal{H} ^{0} u ^{!\sharp} (\E) |\U$.
Via le théorème de Berthelot-Kashiwara, le premier de ces deux isomorphismes entraîne alors que
$u ^{\sharp} _{+}\mathcal{H} ^{0} u ^{!\sharp} (\E) |\U \riso \G$, 
ce qui est une contradiction. 
\end{proof}

\subsection{Stabilité de la cohérence par foncteur cohomologique local en degré maximal}

\begin{nota}
\label{faisceautisation-hat}
Soit $\mathfrak{B}$ la base de voisinages de $\X$ des ouverts affines et munis de coordonnées locales. 
Pour tout $\U \in \mathfrak{B}$, on note $\U ^{\sharp}:= (\U, \U \cap \mathfrak{D})$. 
\begin{itemize}
\item On note 
$\widetilde{\D} ^{(m)} _{\X ^{\sharp} \leftarrow \ZZ ^{\sharp},\Q} \widehat{\otimes} _{\widetilde{\D} ^{(m)} _{\ZZ^{\sharp},\Q}} \widetilde{\D} ^{(m)} _{\ZZ ^{\sharp} \to \X ^{\sharp},\Q}$
le faisceau (d'ensemble) sur $\X$ associé
au préfaisceau sur $\mathfrak{B}$ définie par
$\U \in \mathfrak{B} \mapsto 
\widetilde{D} ^{(m)} _{\U ^{\sharp}\leftarrow \ZZ ^{\sharp}\cap \U^{\sharp},\Q} \widehat{\otimes} _{\widetilde{D} ^{(m)}  _{\ZZ ^{\sharp}\cap \U^{\sharp},\Q}} \widetilde{D} ^{(m)}  _{\ZZ ^{\sharp}\cap \U^{\sharp} \to \U^{\sharp},\Q}$, 
les morphismes de restriction étant les morphismes canoniques (la séparée complétion est un foncteur).

\item Les morphismes canoniques 
$\widetilde{D} ^{(m)} _{\U^{\sharp}  \leftarrow \ZZ ^{\sharp} \cap \U^{\sharp} ,\Q} \otimes _{\widetilde{D} ^{(m)}  _{\ZZ ^{\sharp}\cap \U^{\sharp} ,\Q}} \widetilde{D} ^{(m)}  _{\ZZ ^{\sharp}\cap \U ^{\sharp} \to \U^{\sharp} ,\Q}
\to
\widetilde{D} ^{(m)} _{\U ^{\sharp} \leftarrow \ZZ^{\sharp}  \cap \U^{\sharp} ,\Q} \widehat{\otimes} _{\widetilde{D} ^{(m)}  _{\ZZ ^{\sharp}\cap \U^{\sharp} ,\Q}} \widetilde{D} ^{(m)}  _{\ZZ ^{\sharp}\cap \U ^{\sharp} \to \U^{\sharp} ,\Q}$
sont fonctoriels en $\U$,
on obtient le morphisme canonique de faisceaux:
\begin{equation}
\label{faisceautisation-hat-adj}
\widetilde{\D} ^{(m)} _{\X ^{\sharp} \leftarrow \ZZ ^{\sharp},\Q}\otimes _{\widetilde{\D} ^{(m)} _{\ZZ ^\sharp,\Q}} \widetilde{\D} ^{(m)} _{\ZZ ^{\sharp} \to \X ^{\sharp},\Q}
\to
\widetilde{\D} ^{(m)} _{\X ^{\sharp} \leftarrow \ZZ ^{\sharp},\Q} \widehat{\otimes} _{\widetilde{\D} ^{(m)} _{\ZZ ^\sharp,\Q}} \widetilde{\D} ^{(m)} _{\ZZ ^{\sharp} \to \X ^{\sharp},\Q}.
\end{equation}

\item Soit $\alpha ^{(m)}\colon (\widetilde{\D} ^{(m )} _{\X^{\sharp},\Q}) ^{r} 
\to 
(\widetilde{\D} ^{(m)} _{\X ^{\sharp},\Q}) ^{s}$ un morphisme $\widetilde{\D} ^{(m)} _{\X ^{\sharp},\Q}$-linéaire à gauche.
Les morphismes 
$$id \widehat{\otimes} \Gamma (\U \cap \ZZ, u ^{*} (\alpha ^{(m)})) \colon 
\widetilde{D} ^{(m)} _{\U^{\sharp}  \leftarrow \ZZ ^{\sharp} \cap \U^{\sharp} ,\Q}\widehat{\otimes}  _{\widetilde{D} ^{(m)}  _{\ZZ ^{\sharp}\cap \U^{\sharp} ,\Q}}
(\widetilde{D} ^{(m)}  _{\ZZ ^{\sharp}\cap \U^{\sharp}  \to \U^{\sharp} ,\Q} ) ^{r}
\to 
\widetilde{D} ^{(m)} _{\U^{\sharp}  \leftarrow \ZZ ^{\sharp} \cap \U^{\sharp} ,\Q} \widehat{\otimes} _{\widetilde{D} ^{(m)}  _{\ZZ ^{\sharp}\cap \U^{\sharp} ,\Q}}
(\widetilde{D} ^{(m)}  _{\ZZ ^{\sharp}\cap \U ^{\sharp} \to \U^{\sharp} ,\Q} ) ^{s}$$
\end{itemize}
sont fonctoriels en $\U$
et induisent donc le morphisme
de faisceaux: 
\begin{equation}
\label{id-otimes-u*alpha}
id \widehat{\otimes} u ^{*} (\alpha ^{(m)})
\colon 
\widetilde{\D} ^{(m)} _{\X ^{\sharp} \leftarrow \ZZ ^{\sharp},\Q} \widehat{\otimes} _{\widetilde{\D} ^{(m)} _{\ZZ ^\sharp,\Q}} (\widetilde{\D} ^{(m)} _{\ZZ ^{\sharp} \to \X ^{\sharp},\Q} ) ^{r}
\to 
\widetilde{\D} ^{(m)} _{\X ^{\sharp} \leftarrow \ZZ ^{\sharp},\Q} \widehat{\otimes} _{\widetilde{\D} ^{(m)} _{\ZZ ^\sharp,\Q}} (\widetilde{\D} ^{(m)} _{\ZZ ^{\sharp} \to \X ^{\sharp},\Q}) ^{s}.
\end{equation}

\end{nota}

\begin{lemm}
\begin{enumerate}
\item Avec les notations de \ref{faisceautisation-hat}, on dispose du diagramme canonique commutatif de faisceaux:
\begin{equation}
\label{alpha-hat-faisc}
\xymatrix @R=0,3cm {
{\widetilde{\D} ^{(m)} _{\X ^{\sharp} \leftarrow \ZZ ^{\sharp},\Q} \widehat{\otimes} _{\widetilde{\D} ^{(m)} _{\ZZ ^\sharp,\Q}} \widetilde{\D} ^{(m)} _{\ZZ ^{\sharp} \to \X ^{\sharp},\Q}} 
\ar@{.>}[rr] ^-{\sim}
&
& 
{(\widetilde{\D} ^{(m)} _{\X ^{\sharp} \leftarrow \ZZ ^{\sharp}} \widehat{\otimes} _{\widetilde{\D} ^{(m)} _{\ZZ ^\sharp}} \widetilde{\D} ^{(m)} _{\ZZ ^{\sharp} \to \X ^{\sharp}} )_\Q } 
\\ 
&
{\widetilde{\D} ^{(m)} _{\X ^{\sharp} \leftarrow \ZZ ^{\sharp},\Q} \otimes _{\widetilde{\D} ^{(m)} _{\ZZ ^\sharp,\Q}} \widetilde{\D} ^{(m)} _{\ZZ ^{\sharp} \to \X ^{\sharp},\Q}} 
\ar[ul] ^-{\ref{faisceautisation-hat-adj}}
\ar[ur] ^-{}
& { }
 }
\end{equation}
dont la flèche du haut est un isomorphisme.

\item Soient $\epsilon ^{(m)}\colon (\widetilde{\D} ^{(m)} _{\X ^{\sharp}}) ^{r} 
\to 
(\widetilde{\D} ^{(m)} _{\X ^{\sharp}}) ^{s}$ un morphisme 
$\widetilde{\D} ^{(m)} _{\X ^{\sharp}}$-linéaire à gauche
et
$\alpha ^{(m)}\colon (\widetilde{\D} ^{(m )} _{\X^{\sharp},\Q}) ^{r} 
\to 
(\widetilde{\D} ^{(m)} _{\X ^{\sharp},\Q}) ^{s}$ le morphisme induit par tensorisation par $\Q$.
On dispose alors du carré commutatif
\begin{equation}
\label{pre-diag-comm-u+-c=HnZ}
\xymatrix @ C=2cm @R=0,3cm{
{(\widetilde{\D} ^{(m)} _{\X ^{\sharp} \leftarrow \ZZ ^{\sharp}} \widehat{\otimes} _{\widetilde{\D} ^{(m)} _{\ZZ ^\sharp}} (\widetilde{\D} ^{(m)} _{\ZZ ^{\sharp} \to \X ^{\sharp}} ) ^r )_\Q } 
\ar[r] ^-{(id \widehat{\otimes} u ^*\epsilon ^{(m)}) _\Q}
& 
{(\widetilde{\D} ^{(m)} _{\X ^{\sharp} \leftarrow \ZZ ^{\sharp}} \widehat{\otimes} _{\widetilde{\D} ^{(m)} _{\ZZ ^\sharp}} (\widetilde{\D} ^{(m)} _{\ZZ ^{\sharp} \to \X ^{\sharp}} ) ^s )_\Q } 
\\ 
{ \widetilde{\D} ^{(m)} _{\X ^{\sharp} \leftarrow \ZZ ^{\sharp},\Q} \widehat{\otimes} _{\widetilde{\D} ^{(m)}_{\ZZ ^{\sharp},\Q} }
(u ^* \widetilde{\D} ^{(m)} _{\X ^{\sharp},\Q}) ^{r} }
\ar[r] ^-{ id \widehat{\otimes} u ^*\alpha ^{(m)}} 
\ar[u] ^-{\sim}
& 
{\widetilde{\D} ^{(m)} _{\X ^{\sharp} \leftarrow \ZZ ^{\sharp},\Q} \widehat{\otimes} _{\widetilde{\D} ^{(m)}_{\ZZ ^{\sharp},\Q} }
(u ^* \widetilde{\D} ^{(m)} _{\X ^{\sharp},\Q}) ^{s} ,} 
\ar[u] ^-{\sim}
} 
\end{equation}
dont les isomorphismes verticaux sont induites par la factorisation de \ref{alpha-hat-faisc}.
\end{enumerate}

\end{lemm}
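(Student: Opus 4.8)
The plan is to deduce both assertions from the local isomorphism \ref{alpha-hat}, together with the functoriality of the separated-completion functor and of $-\otimes_{\Z}\Q$, by sheafifying over the basis $\mathfrak{B}$. For the first assertion, on each $\U\in\mathfrak{B}$ the isomorphism \ref{alpha-hat}, applied to $\U^\sharp$ and $\ZZ^\sharp\cap\U^\sharp$ in place of $\X^\sharp$ and $\ZZ^\sharp$, furnishes a canonical isomorphism of $K$-Banach spaces between the values of the two presheaves of \ref{faisceautisation-hat}. By the construction of \ref{alpha-hat} as the factorization of the canonical strict monomorphism \ref{alpha-compl}, this isomorphism involves only canonical data (the $p$-adic completion and the canonical map $\otimes \to (\widehat{\otimes})_\Q$) and is therefore independent of the chosen local coordinates; it thus commutes with the restriction morphisms and induces an isomorphism of presheaves on $\mathfrak{B}$, whence the dotted top isomorphism after sheafification. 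For the lower triangle I would argue section by section: on $\U$ the left diagonal arrow is, by definition, the completion morphism \ref{faisceautisation-hat-adj}, and the factorization recalled just before \ref{alpha-hat} says precisely that composing it with \ref{alpha-hat} recovers the canonical map $\otimes \to (\widehat{\otimes})_\Q$, which is the right diagonal arrow; the triangle therefore commutes on each $\U$, hence after passing to the associated sheaves.

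For the second assertion, I would apply the first one componentwise to the $r$-fold and $s$-fold direct sums, which yields the two vertical isomorphisms. The bottom horizontal arrow $id \widehat{\otimes} u^* \alpha^{(m)}$ is built from $\alpha^{(m)} = (\epsilon^{(m)})_\Q$, while the top arrow $(id \widehat{\otimes} u^* \epsilon^{(m)})_\Q$ is the $\Q$-linearization of the morphism $id \widehat{\otimes} u^* \epsilon^{(m)}$ attached to the integral operator $\epsilon^{(m)}$. Since $u^* \alpha^{(m)}$ and $u^* \epsilon^{(m)}$ are induced by one and the same functoriality from $\epsilon^{(m)}$, and since both $\widehat{\otimes}$ and $-\otimes_{\Z}\Q$ are functorial, the square commutes already at the level of the presheaves on $\mathfrak{B}$, and therefore after sheafification; the vertical arrows are precisely the factorization isomorphisms of \ref{alpha-hat-faisc} applied to the $r$- and $s$-components.

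The only delicate point is the naturality in $\U$ of the local isomorphism \ref{alpha-hat}, that is, its compatibility with the restriction maps of the two presheaves. This is not a computation but a matter of unwinding definitions: \ref{alpha-hat} is defined as the canonical factorization of \ref{alpha-compl}, a morphism built solely from the $p$-adic completion and the canonical strict monomorphism, neither of which depends on coordinates; hence it is automatically compatible with restriction. Everything else reduces to the routine functoriality of $\widehat{\otimes}$ and of the generic-fibre functor $-\otimes_{\Z}\Q$.
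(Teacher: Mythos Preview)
Your overall strategy is the right one and is in fact the paper's strategy, but there is a genuine gap in the first assertion. You treat both objects in the top row of \ref{alpha-hat-faisc} as sheafifications of presheaves on $\mathfrak{B}$ that are identified termwise by \ref{alpha-hat}. This is correct for the left-hand side, which is \emph{defined} in \ref{faisceautisation-hat} as the sheaf associated to $\U\mapsto \widetilde{D}^{(m)}_{\U^\sharp\leftarrow\ZZ^\sharp\cap\U^\sharp,\Q}\widehat{\otimes}_{\widetilde{D}^{(m)}_{\ZZ^\sharp\cap\U^\sharp,\Q}}\widetilde{D}^{(m)}_{\ZZ^\sharp\cap\U^\sharp\to\U^\sharp,\Q}$. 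But the right-hand side $(\widetilde{\D}^{(m)}_{\X^\sharp\leftarrow\ZZ^\sharp}\widehat{\otimes}_{\widetilde{\D}^{(m)}_{\ZZ^\sharp}}\widetilde{\D}^{(m)}_{\ZZ^\sharp\to\X^\sharp})_\Q$ is \emph{not} defined this way: it is the honest $p$-adically completed tensor product of sheaves (an inverse limit of the mod $\pi^{i+1}$ reductions), followed by $\otimes_\Z\Q$. Before you can invoke \ref{alpha-hat} sectionwise, you must first compute $\Gamma(\U,-)$ of this sheaf and show it equals $(\widetilde{D}^{(m)}_{\U^\sharp\leftarrow\ZZ^\sharp\cap\U^\sharp}\widehat{\otimes}_{\widetilde{D}^{(m)}_{\ZZ^\sharp\cap\U^\sharp}}\widetilde{D}^{(m)}_{\ZZ^\sharp\cap\U^\sharp\to\U^\sharp})_\Q$.

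This is exactly the computation the paper carries out: one uses that $\Gamma(\U,-)$ commutes with the inverse limit over $i$, then the quasi-coherence of each mod $\pi^{i+1}$ reduction to pull $\Gamma$ through the tensor product at finite level, and finally that $\Gamma(\U,-)$ commutes with $\otimes_\Z\Q$ on a noetherian space. Once this identification of sections is established, your functoriality argument (naturality of \ref{alpha-hat} in $\U$, commutativity of the triangle via \ref{alpha-compl}) is correct and coincides with the paper's. Your treatment of the second assertion is fine and matches the paper's ``se v\'erifie de m\^eme facilement''.
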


\begin{proof}
Soit $\U \in \mathfrak{B}$.
Pour tout entier positif $i$, on note $X _i$, $X ^{\sharp}_i$, $Z _i$, $Z ^{\sharp}_i$,
$U _i$, $U ^{\sharp}_i$ les réductions modulo
$\pi ^{i+1}$ de respectivement $\X$, $\X ^{\sharp}$, $\ZZ$, $\ZZ ^{\sharp}$,
$\U$, $\U ^{\sharp}$. 
 On dispose des $\B ^{(m)} _{Z _i\cap U _i} (T \cap Z _i\cap U _i)$-modules quasi-cohérents
$\widetilde{\D} ^{(m)} _{U ^{\sharp}_i \leftarrow Z ^{\sharp}_i\cap U ^{\sharp}_i}:= 
\widetilde{\D} ^{(m)} _{\U ^{\sharp}\leftarrow \ZZ^{\sharp} \cap \U^{\sharp}} \otimes _{\V} \V / \pi ^{i+1} $,
$\widetilde{\D} ^{(m)} _{Z^{\sharp} _i\cap U ^{\sharp}_i}:=
\widetilde{\D} ^{(m)} _{\ZZ ^{\sharp}\cap \U^{\sharp}}\otimes _{\V} \V / \pi ^{i+1} $, 
$\widetilde{\D} ^{(m)} _{Z ^{\sharp}_i\cap U ^{\sharp}_i \to U ^{\sharp}_i}:=
\widetilde{\D} ^{(m)} _{\ZZ^{\sharp}\cap \U ^{\sharp}\to \U ^{\sharp}} \otimes _{\V} \V / \pi ^{i+1} $.
Comme le foncteur sections globales commute aux limites projectives et 
par quasi-cohérence de nos faisceaux sur $X _i$, on obtient les isomorphismes canoniques
\begin{gather}
\notag
\Gamma (\U, \widetilde{\D} ^{(m)} _{\X ^{\sharp} \leftarrow \ZZ ^{\sharp}} \widehat{\otimes} _{\widetilde{\D} ^{(m)} _{\ZZ ^\sharp}} \widetilde{\D} ^{(m)} _{\ZZ ^{\sharp} \to \X ^{\sharp}} )
=
\Gamma 
(\U, \widetilde{\D} ^{(m)} _{\U ^{\sharp}\leftarrow \ZZ ^{\sharp} \cap \U ^{\sharp}} \widehat{\otimes} _{\widetilde{\D} ^{(m)} _{\ZZ^{\sharp}\cap \U^{\sharp}}} \widetilde{\D} ^{(m)} _{\ZZ^{\sharp}\cap \U ^{\sharp} \to \U^{\sharp}})
\riso
\underleftarrow{\lim} _i 
\Gamma 
(U _i, \widetilde{\D} ^{(m)} _{U ^{\sharp}_i \leftarrow Z ^{\sharp}_i\cap U ^{\sharp}_i} \otimes _{\widetilde{\D} ^{(m)} _{Z ^{\sharp}_i\cap U ^{\sharp}_i}} \widetilde{\D} ^{(m)} _{Z ^{\sharp}_i\cap U ^{\sharp}_i \to U ^{\sharp}_i})
\\
\riso
\underleftarrow{\lim} _i \widetilde{D} ^{(m)} _{U^{\sharp} _i \leftarrow Z ^{\sharp}_i\cap U ^{\sharp}_i} 
\otimes _{\widetilde{D} ^{(m)} _{Z ^{\sharp}_i\cap U ^{\sharp}_i}} \widetilde{D} ^{(m)} _{Z ^{\sharp}_i\cap U ^{\sharp}_i \to U ^{\sharp}_i}
\riso
\widetilde{D} ^{(m)} _{\U ^{\sharp}\leftarrow \ZZ^{\sharp} \cap \U^{\sharp}} \widehat{\otimes} _{\widetilde{D} ^{(m)}  _{\ZZ ^{\sharp}\cap \U^{\sharp}}} \widetilde{D} ^{(m)}  _{\ZZ ^{\sharp}\cap \U ^{\sharp}\to \U^{\sharp}}.
\end{gather}
Or, pour tout $\U \in \mathfrak{B}$, 
le foncteur $\Gamma (\U, -)$ commute au produit tensoriel par $\Q$. 
On en déduit alors que  le faisceau 
$(\widetilde{\D} ^{(m)} _{\X ^{\sharp} \leftarrow \ZZ ^{\sharp}} \widehat{\otimes} _{\widetilde{\D} ^{(m)} _{\ZZ^{\sharp} }} \widetilde{\D} ^{(m)} _{\ZZ ^{\sharp} \to \X ^{\sharp}} )_\Q$
sur $\X$
est associé au faisceau sur $\mathfrak{B}$ défini par 
$\U \in \mathfrak{B} \mapsto 
(\widetilde{D} ^{(m)} _{\U^{\sharp} \leftarrow \ZZ^{\sharp} \cap \U^{\sharp}} \widehat{\otimes} _{\widetilde{D} ^{(m)}  _{\ZZ ^{\sharp}\cap \U^{\sharp}}} \widetilde{D} ^{(m)}  _{\ZZ ^{\sharp}\cap \U ^{\sharp}\to \U^{\sharp}} ) _{\Q}$.
Les isomorphismes canoniques \ref{alpha-hat} nous permettent de conclure le premier point.
La seconde assertion se vérifie de même facilement.
\end{proof}

\begin{rema}
Avec les notations de \ref{faisceautisation-hat}, 
le faisceau sur $\mathfrak{B}$ en $K$-espaces vectoriels
$\widetilde{\D} ^{(m)} _{\X ^{\sharp} \leftarrow \ZZ ^{\sharp},\Q} \widehat{\otimes} _{\widetilde{\D} ^{(m)} _{\ZZ ^\sharp,\Q}} \widetilde{\D} ^{(m)} _{\ZZ ^{\sharp} \to \X ^{\sharp},\Q}$
est aussi un faisceau sur $\mathfrak{B}$ à valeur dans la catégorie des 
$K$-espaces topologiques (on vérifie que les morphismes de restriction sont injectifs et stricts).
\end{rema}

\begin{lemm}
\label{lemm-u^*coh}
On suppose que $u\colon \ZZ  \hookrightarrow \X$ est 
de codimension pure $e$.
Soit $\alpha$ un morphisme de $\smash{\D} ^\dag _{\X ^{\sharp}} (\hdag T) _{\Q}$-modules à gauche de la forme
$\alpha \colon (\smash{\D} ^\dag _{\X ^{\sharp}} (\hdag T) _{\Q}) ^{r} \to (\smash{\D} ^\dag _{\X ^{\sharp}} (\hdag T) _{\Q}) ^{s}$.
Soit $m _0$ assez grand tel qu'il existe  
un morphisme $\widetilde{\D} ^{(m _0)} _{\X ^{\sharp},\Q}$-linéaire à gauche 
de la forme
$\alpha ^{(m _0)}\colon (\widetilde{\D} ^{(m _0)} _{\X ^{\sharp},\Q}) ^{r} 
\to 
(\widetilde{\D} ^{(m _0)} _{\X ^{\sharp},\Q}) ^{s}$
induisant $\alpha$ par extension
via 
$\widetilde{\D} ^{(m _0)} _{\X ^{\sharp},\Q} \to \smash{\D} ^\dag _{\X ^{\sharp}} (\hdag T) _{\Q} $.
Pour $m \geq m _0$, 
on note $\alpha ^{(m)}\colon (\widetilde{\D} ^{(m )} _{\X^{\sharp},\Q}) ^{r} 
\to 
(\widetilde{\D} ^{(m)} _{\X^{\sharp},\Q}) ^{s}$, 
le morphisme induit par extension de $\alpha ^{(m _0)}$.

On dispose alors du diagramme commutatif 
\begin{equation}
\label{diag-comm-u+-c=HnZ}
\xymatrix @ C=2cm @R=0,3cm {
{\mathcal{H} ^{\dag e} _{Z}
((\smash{\D} ^\dag _{\X ^{\sharp}} (\hdag T) _{\Q}) ^{r})} 
\ar[r] ^-{\mathcal{H} ^{\dag e} _{Z} (\alpha)}
& 
{\mathcal{H} ^{\dag e} _{Z}
((\smash{\D} ^\dag _{\X ^{\sharp}} (\hdag T) _{\Q}) ^{s})}
\\ 
{ \underrightarrow{\lim} _m  
 \widetilde{\D} ^{(m)} _{\X ^{\sharp} \leftarrow \ZZ ^{\sharp},\Q} \widehat{\otimes} _{\widetilde{\D} ^{(m)}_{\ZZ ^{\sharp},\Q} }
(u ^* \widetilde{\D} ^{(m)} _{\X^{\sharp},\Q}) ^{r} }
\ar[r] ^-{ \underrightarrow{\lim} _m   (id \widehat{\otimes} u ^*\alpha ^{(m)})} 
\ar[u] ^-{\sim}
& 
{ \underrightarrow{\lim} _m  
 \widetilde{\D} ^{(m)} _{\X ^{\sharp} \leftarrow \ZZ ^{\sharp},\Q} \widehat{\otimes} _{\widetilde{\D} ^{(m)}_{\ZZ ^{\sharp},\Q} }
(u ^* \widetilde{\D} ^{(m)} _{\X^{\sharp},\Q}) ^{s} } 
\ar[u] ^-{\sim}
} 
\end{equation}
dont les flèches verticales sont des isomorphismes. 
\end{lemm}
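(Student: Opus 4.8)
Le principe consiste \`a calculer le foncteur cohomologique local sur les modules libres via l'isomorphisme
$\R \underline{\Gamma} ^{\dag} _{Z} \riso u _+ ^{\sharp (\bullet)}  \circ u ^{\sharp (\bullet) !}$
(voir \cite[5.3.8]{caro-stab-sys-ind-surcoh} et les rappels ci-dessus), puis \`a passer \`a la limite inductive sur le niveau via l'isomorphisme
$\R \underline{\Gamma} ^{\dag} _{Z} \circ \underrightarrow{\lim} \riso \underrightarrow{\lim} \circ \R \underline{\Gamma} ^{\dag} _{Z}$.
On se ram\`ene ainsi \`a \'etudier le syst\`eme de niveau $(\widetilde{\D} ^{(\bullet)} _{\X ^{\sharp},\Q}) ^{r}$.

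\`A niveau $m$ fix\'e, les th\'eor\`emes de type $A$ (\ref{thA-B-D->}) appliqu\'es au module libre $(\widetilde{\D} ^{(m)} _{\X ^{\sharp},\Q}) ^{r}$ montrent que l'image inverse extraordinaire $u ^{\sharp (m) !}$ est concentr\'ee en un seul degr\'e cohomologique et que sa cohomologie s'y identifie canoniquement \`a $(u ^* \widetilde{\D} ^{(m)} _{\X ^{\sharp},\Q}) ^{r} = (\widetilde{\D} ^{(m)} _{\ZZ ^{\sharp} \to \X ^{\sharp},\Q}) ^{r}$ (voir \ref{nota-u*-u*}). En lui appliquant le foncteur image directe $u _+ ^{\sharp (m)}$, dont la description explicite fait intervenir le produit tensoriel compl\'et\'e par le bimodule de transfert $\widetilde{\D} ^{(m)} _{\X ^{\sharp} \leftarrow \ZZ ^{\sharp},\Q}$, et en utilisant l'hypoth\`ese de codimension pure $e$, on obtient que le complexe $u _+ ^{\sharp (m)} \circ u ^{\sharp (m) !} ((\widetilde{\D} ^{(m)} _{\X ^{\sharp},\Q}) ^{r})$ est concentr\'e en degr\'e $e$, sa cohomologie s'identifiant canoniquement \`a
\begin{equation}
\notag
\widetilde{\D} ^{(m)} _{\X ^{\sharp} \leftarrow \ZZ ^{\sharp},\Q} \widehat{\otimes} _{\widetilde{\D} ^{(m)}_{\ZZ ^{\sharp},\Q} } (u ^* \widetilde{\D} ^{(m)} _{\X ^{\sharp},\Q}) ^{r}.
\end{equation}
Ces isomorphismes \'etant compatibles aux morphismes de changement de niveau, leur limite inductive fournit, gr\^ace \`a l'identification $\mathcal{H} ^{\dag e} _{Z} = \underrightarrow{\lim} _m \mathcal{H} ^{e}$ (le foncteur $\mathcal{H} ^{e}$ commutant aux limites inductives filtrantes), les fl\`eches verticales annonc\'ees du diagramme \ref{diag-comm-u+-c=HnZ}.

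Reste la commutativit\'e. Par fonctorialit\'e, $\mathcal{H} ^{\dag e} _{Z} (\alpha)$ correspond au morphisme induit par $u _+ ^{\sharp (\bullet)} \circ u ^{\sharp (\bullet) !} (\alpha ^{(\bullet)})$. Or la cohomologie de $u ^{\sharp (\bullet) !} (\alpha ^{(\bullet)})$ n'est autre que l'image inverse $u ^* (\alpha ^{(\bullet)})$ (voir \ref{nota-u*-u*}). \`A niveau $m$ fix\'e, le fait que le morphisme $u _+ ^{\sharp (m)} (u ^* \alpha ^{(m)})$ s'identifie, \`a travers les factorisations \ref{alpha-hat-faisc}, au morphisme $id \widehat{\otimes} u ^* \alpha ^{(m)}$ est pr\'ecis\'ement le carr\'e commutatif \ref{pre-diag-comm-u+-c=HnZ} du lemme qui pr\'ec\`ede. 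En passant \`a la limite inductive sur $m$, on en d\'eduit la commutativit\'e du diagramme \ref{diag-comm-u+-c=HnZ}.

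La principale difficult\'e r\'eside dans l'identification explicite, \`a niveau $m$ fix\'e, de la cohomologie locale de degr\'e maximal du module libre avec le produit tensoriel compl\'et\'e des bimodules de transfert: celle-ci repose sur un calcul en coordonn\'ees locales, les classes de cohomologie locale de degr\'e $e$ \'etant repr\'esent\'ees par les d\'eriv\'ees normales $\underline{\partial} ^{<\underline{k}>}$ pour $\underline{k} \in \N ^{e}$, lesquelles engendrent pr\'ecis\'ement le bimodule $\widetilde{\D} ^{(m)} _{\X ^{\sharp} \leftarrow \ZZ ^{\sharp}}$ (comme dans la preuve de \ref{DotimesDhat-se}), ainsi que sur le soin \`a apporter \`a la faisceautisation sur $\mathfrak{B}$ introduite en \ref{faisceautisation-hat}.
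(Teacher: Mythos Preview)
Your argument follows essentially the same route as the paper's: identify $\R\underline{\Gamma}^{\dag}_{Z}$ with $u _+ ^{\sharp (\bullet)}  \circ u ^{\sharp (\bullet) !}$, compute level by level on free modules, invoke the square \ref{pre-diag-comm-u+-c=HnZ}, and pass to the limit. The only point you gloss over is that \ref{pre-diag-comm-u+-c=HnZ} is stated for an \emph{integral} morphism $\epsilon ^{(m)}$ (i.e.\ $\widetilde{\D} ^{(m)} _{\X ^{\sharp}}$-linear, before tensoring with $\Q$); the paper handles this by first multiplying $\alpha ^{(m _0)}$ by a large enough power of $p$ to obtain such a lift $\epsilon ^{(m _0)}$, which is also what lets one regard $\alpha$ as $\underrightarrow{\lim}$ of a morphism in $\smash{\underrightarrow{LD}} ^{\mathrm{b}} _{\Q ,\mathrm{coh}}(\widetilde{\D} _{\X ^\sharp } ^{(\bullet)})$ and thereby use the compatibility $\R \underline{\Gamma} ^{\dag} _{Z} \circ \underrightarrow{\lim} \riso \underrightarrow{\lim} \circ \R \underline{\Gamma} ^{\dag} _{Z}$.
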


\begin{proof}
Quitte à multiplier $\alpha ^{(m _0)}$ par une puissance de $p$ assez grande, on peut supposer que
$\alpha ^{(m _0)}$ provient par extension d'un morphisme
$\widetilde{\D} ^{(m _0)} _{\X ^{\sharp}}$-linéaire à gauche 
de la forme
$\epsilon ^{(m _0)}\colon (\widetilde{\D} ^{(m _0)} _{\X ^{\sharp}}) ^{r} 
\to 
(\widetilde{\D} ^{(m _0)} _{\X ^{\sharp}}) ^{s}$.
Notons $\epsilon ^{(m)}\colon (\widetilde{\D} ^{(m)} _{\X ^{\sharp}}) ^{r} 
\to 
(\widetilde{\D} ^{(m)} _{\X ^{\sharp}}) ^{s}$
les morphismes induits par extension. 
On obtient ainsi le foncteur dans 
$\smash{\underrightarrow{LD}} ^{\mathrm{b}} _{\Q ,\mathrm{coh}}
(\widetilde{\D} _{\X ^\sharp } ^{(\bullet)})
$
de la forme
$$\epsilon ^{(\bullet+m_0)}
\colon 
(\widetilde{\D} ^{(\bullet+m _0)} _{\X ^{\sharp}}) ^{r} 
\to 
(\widetilde{\D} ^{(\bullet+m _0)} _{\X ^{\sharp}}) ^{s}.$$
Or,
$$\underrightarrow{\lim}   \circ u _+ ^{\sharp (\bullet)}  \circ u ^{*} (\epsilon ^{(\bullet+m_0)})
\colon 
\underrightarrow{\lim} _m
\,
\left (\widetilde{\D} ^{(m)} _{\X ^{\sharp} \leftarrow \ZZ ^{\sharp}} \widehat{\otimes} _{\widetilde{\D} ^{(m)} _{\ZZ ^\sharp}}( \widetilde{\D} ^{(m)} _{\ZZ ^{\sharp} \to \X ^{\sharp}} ) ^{r} \right )_\Q  
\to 
\underrightarrow{\lim} _m
\,
\left (\widetilde{\D} ^{(m)} _{\X ^{\sharp} \leftarrow \ZZ ^{\sharp}} \widehat{\otimes} _{\widetilde{\D} ^{(m)} _{\ZZ ^\sharp}}( \widetilde{\D} ^{(m)} _{\ZZ ^{\sharp} \to \X ^{\sharp}} )^{s} \right )_\Q  
.$$
Or, comme par définition
$\underrightarrow{\lim} = \underrightarrow{\lim} _m \circ (\Q \otimes _{\Z} -)$, 
en appliquant le foncteur $ \underrightarrow{\lim} _m $
au carré commutatif \ref{pre-diag-comm-u+-c=HnZ}, on obtient que 
$\underrightarrow{\lim}   \circ u _+ ^{\sharp (\bullet)}  \circ u ^{*} (\epsilon ^{(\bullet+m_0)})$
et 
$ \underrightarrow{\lim} _m   (id \widehat{\otimes} u ^*\alpha ^{(m)})$ sont canoniquement isomorphes.
Comme par définition
$\R \underline{\Gamma} ^{\dag} _{Z} (\alpha) =\underrightarrow{\lim} \circ \R \underline{\Gamma} ^{\dag} _{Z} 
(\epsilon ^{(\bullet+m_0)})$,
comme les deux foncteurs 
$\R \underline{\Gamma} ^{\dag} _{Z}$
et 
$u _+ ^{\sharp (\bullet)}  \circ u ^{\sharp (\bullet) !}$ sont isomorphes,
comme $u ^{*}= u ^{\sharp (\bullet) !} [e]$, 
on en déduit alors le résultat. 
\end{proof}

\begin{rema}
Avec les notations de \ref{lemm-u^*coh}, supposons $\X$ affine. 
Soient $E ^{(m _0)}$ un $\widetilde{D} ^{(m _0)} _{\X ^{\sharp}}$-module cohérent sans $p$-torsion,
$\E ^{(\bullet)}:= \widetilde{\D} ^{(\bullet+m _0)} _{\X ^{\sharp}} \otimes _{\widetilde{D} ^{(m _0)} _{\X ^{\sharp}}} E ^{(m _0)}$ 
le $\widetilde{\D} ^{(\bullet+m _0)} _{\X ^{\sharp}}$-module localement de présentation finie associé, 
$\E := \underrightarrow{\lim} (\E ^{\bullet}) $ le $\smash{\D} ^\dag _{\X ^{\sharp}} (\hdag T) _{\Q}$-module cohérent associé. 
En prenant une résolution de $E ^{(m _0)}$ par des 
$\widetilde{D} ^{(m _0)} _{\X ^{\sharp}}$-module libre de type fini qui induit, après application du foncteur 
$\widetilde{\D} ^{(\bullet+m _0)} _{\X ^{\sharp}} \otimes _{\widetilde{D} ^{(m _0)} _{\X ^{\sharp}}}-$,
une résolution de $\E ^{(\bullet)}$ par des 
$\widetilde{\D} ^{(\bullet+m _0)} _{\X ^{\sharp}}$-modules libres de type fini, 
on vérifie que
le complexe $\R \underline{\Gamma} ^{\dag} _Z (\E)$ 
est isomorphe à un complexe dont les termes sont de la forme $\mathcal{H} ^{\dag e} _{Z}
((\smash{\D} ^\dag _{\X ^{\sharp}} (\hdag T) _{\Q}) ^{N})$ pour certains entiers $N$.
\end{rema}

\begin{theo}
\label{theo-u^*coh}
On suppose que $u\colon \ZZ  \hookrightarrow \X$ est 
de codimension pure $e$.
Soit $\E $ un $\smash{\D} ^\dag _{\X ^{\sharp}} (\hdag T) _{\Q}$-module cohérent vérifiant les deux propriétés suivantes:
\begin{enumerate}
\item \label{theo-u^*coh-(1)} 
pour $i = 0, \dots, e -1$, localement en $\ZZ$, les $\smash{\D} ^\dag _{\ZZ ^\sharp} (\hdag U) _{\Q}$-modules $\mathcal{H} ^{i} u ^{!\sharp} (\E)$ sont $\Gamma (\ZZ,-)$-acycliques ; 

\item \label{theo-u^*coh-(2)}
le module $u ^{*} (\E)$ est un isocristal sur $\ZZ ^\sharp$ surconvergent le long de $U$.
\end{enumerate}
Le $\smash{\D} ^\dag _{\X ^{\sharp}} (\hdag T) _{\Q}$-module $\mathcal{H} ^{\dag e} _{Z} (\E)$ est alors cohérent.
\end{theo}

\begin{proof}
0) {\it Hypothèses \ref{theo-u^*coh-(1)}: passage aux sections globales, quelques notations}. 

Comme la $\smash{\D} ^\dag _{\X ^{\sharp}} (\hdag T) _{\Q}$-cohérence de $\mathcal{H} ^{\dag e} _{Z} (\E)$ est locale, 
on peut reprendre les notations et hypothèses du chapitre \ref{nota-sect2}.
Dans ce cas, posons $E:= \Gamma (\X, \E)$.
Avec les notations de \ref{nota-u*-u*}, 
les hypothèses d'acyclicité sur $\E$ implique 
$\Gamma ( \ZZ, u ^*\E)
\riso
\mathcal{H} ^{0}(\R \Gamma ( \ZZ, \L u ^* \E))
$ (on utilise la suite spectrale d'hypercohomologie du foncteur dérivé de
$\Gamma ( \ZZ, -)$).
On déduit alors du lemme \ref{RGammau*=u*} 
l'isomorphisme
$\Gamma ( \ZZ, u ^{*} \E) \riso 
\mathcal{H} ^{0} (\L u ^* (E))=u ^* (E)$.

Choisissons une présentation finie
$(\smash{D} ^\dag _{\X  ^{\sharp}} (\hdag T) _{\Q}) ^{r} \underset{a}{\longrightarrow}  (\smash{D} ^\dag _{\X  ^{\sharp}} (\hdag T) _{\Q}) ^{s} \underset{b}{\longrightarrow} E \to 0$
de $E$ 
(voir le théorème de type $A$ de Berthelot de \cite[3.6.5]{Be1}
qui est valable pour les $\smash{\D} ^\dag _{\X ^{\sharp}} (\hdag T) _{\Q}$-modules cohérents).
En appliquant à cette présentation finie le foncteur $\smash{\D} ^\dag _{\X ^{\sharp}} (\hdag T) _{\Q} \otimes _{\smash{D} ^\dag _{\X  ^{\sharp}} (\hdag T) _{\Q}}-$, 
on en déduit la suite exacte de $\smash{\D} ^\dag _{\X ^{\sharp}} (\hdag T) _{\Q}$-modules à gauche 
$(\smash{\D} ^\dag _{\X ^{\sharp}} (\hdag T) _{\Q}) ^{r} \underset{\alpha}{\longrightarrow}  (\smash{\D} ^\dag _{\X ^{\sharp}} (\hdag T) _{\Q}) ^{s} \underset{\beta}{\longrightarrow} \E \to 0$.
Par exactitude à droite des deux foncteurs de la forme $u ^*$ (voir les définitions de \ref{nota-u*-u*}), on obtient les suites exactes
$ (u ^* \smash{\D} ^\dag _{\X ^{\sharp}} (\hdag T) _{\Q}) ^{r} \underset{u ^{*} (\alpha)}{\longrightarrow} (u ^* \smash{\D} ^\dag _{\X ^{\sharp}} (\hdag T) _{\Q}) ^{s}
 \underset{u ^{*} (\beta)}{\longrightarrow} u ^{*} (\E)\to 0$
 et 
 $(u ^* \smash{D} ^\dag _{\X  ^{\sharp}} (\hdag T) _{\Q}) ^{r} \underset{u ^{*} (a)}{\longrightarrow} (u ^* \smash{D} ^\dag _{\X  ^{\sharp}} (\hdag T) _{\Q}) ^{s}
 \underset{u ^{*} (b)}{\longrightarrow} u ^{*} (E)\to 0$. 
Comme $\Gamma (\ZZ,u ^{*} (\alpha))=u ^{*} (a)$,
comme le morphisme canonique
$u ^* (E) \to \Gamma ( \ZZ, u ^{*} \E)$ 
est un isomorphisme, en appliquant le foncteur $\Gamma (\ZZ, -)$ à l'avant dernière suite exacte, 
on obtient encore une suite exacte (canoniquement isomorphe à la dernière).
En posant $\M:= u ^{*}( \E)$,
$M:= \Gamma ( \ZZ, \M)$, 
$\phi:= \Gamma ( \ZZ, u ^{*} (\alpha))$
et
$\psi:= \Gamma ( \ZZ, u ^{*} (\beta))$, on obtient donc la suite exacte
 $(u ^* \smash{D} ^\dag _{\X  ^{\sharp}} (\hdag T) _{\Q}) ^{r} \underset{\phi}{\longrightarrow} (u ^* \smash{D} ^\dag _{\X  ^{\sharp}} (\hdag T) _{\Q}) ^{s}
 \underset{\psi}{\longrightarrow} M\to 0$.

\medskip
0 bis) {\it Hypothèses \ref{theo-u^*coh-(2)}: niveaux $m \geq m _0$, quelques notations.}

Soit $m _0 \geq 0$ assez grand tel que, pour tout $m \geq m _0$, il existe  
un morphisme $\widetilde{\D} ^{(m)} _{\X^{\sharp},\Q}$-linéaire à gauche 
de la forme
$\alpha ^{(m)}\colon (\widetilde{\D} ^{(m)} _{\X^{\sharp},\Q}) ^{r} 
\to 
(\widetilde{\D} ^{(m)} _{\X^{\sharp},\Q}) ^{s}$
et induisant $\alpha$ par extension via 
$\widetilde{\D} ^{(m)} _{\X^{\sharp},\Q} 
\to 
\smash{\D} ^\dag _{\X ^{\sharp}} (\hdag T) _{\Q} $.
D'après la seconde partie de nos hypothèses, $\M$ est associé à un isocristal sur $Z ^{\sharp} \setminus U$ surconvergent le long de $U$ 
(voir \cite{caro-Tsuzuki}).
Donc, comme $M=\Gamma (\ZZ, \M)$, 
comme $\ZZ$ est affine et lisse, 
le $O _{\ZZ} (\hdag U) _{\Q}$-module $M$ est alors projectif et de type fini.
De plus, 
de manière analogue à \cite[4.4.5]{Be1} (et aussi \cite[4.4.7]{Be1}), 
quitte à augmenter $\lambda _0$ et $m _0$, 
il existe un $\widetilde{\D} ^{(m _0)} _{\ZZ ^{\sharp},\Q}$-module de type fini et topologiquement nilpotent
$\M ^{(m_0)}$ qui est, pour la structure induite de $\widetilde{\B} ^{(m _0)} _{\ZZ} (U) _{\Q}$-module,
un $\widetilde{\B} ^{(m _0)} _{\ZZ} (U) _{\Q}$-module (localement projectif) de type fini 
et un isomorphisme $\D ^{\dag}  _{\ZZ^{\sharp}} (\hdag U) _{\Q}$-linéaire de la forme
$\M \riso  \D ^{\dag}  _{\ZZ^{\sharp}} (\hdag U) _{\Q}\otimes _{\widetilde{\D} ^{(m _0)}_{\ZZ ^{\sharp},\Q} } \M ^{(m _0)}$.
Pour tout $m\geq m _0$, on pose alors
$\M ^{(m)} := \widetilde{\D} ^{(m )}_{\ZZ ^{\sharp},\Q}   \otimes _{\widetilde{\D} ^{(m _0)}_{\ZZ ^{\sharp},\Q} } \M ^{(m _0)}$.

Notons $\beta ^{(m)}\colon 
( \widetilde{\D} ^{(m)} _{\X^{\sharp},\Q}) ^{s} 
\to 
\E$
le composé de l'inclusion canonique
$(\widetilde{\D} ^{(m)} _{\X^{\sharp},\Q}) ^{s} 
\hookrightarrow 
( \smash{\D} ^\dag _{\X ^{\sharp}} (\hdag T) _{\Q}) ^{s}$
avec le morphisme $\beta$. 
Posons enfin
$ \phi ^{(m)}:= \Gamma (\ZZ, u ^{*}(\alpha ^{(m)}))$
et
$\psi ^{(m)}:= \Gamma (\ZZ, u ^{*}(\beta ^{(m)}))$.
Avec les définitions et notations de \ref{nota-u*-u*}, on obtient ainsi la suite 
 $(u ^* \widetilde{D} ^{(m)}  _{\X ^{\sharp},\Q}) ^{r} \underset{\phi^{(m)}}{\longrightarrow} (u ^* \widetilde{D} ^{(m)}  _{\X ^{\sharp},\Q}) ^{s}
 \underset{\psi^{(m)}}{\longrightarrow} M \to 0$. On remarque que cette suite devient exacte seulement après passage à la limite sur le niveau.

\medskip
1) {\it Hypothèses \ref{theo-u^*coh-(2)}: topologie de type LB sur $M $, niveaux $m \geq m _1$, sections continues $\theta$, $\theta ^{(m)}$.}

On munit 
$M$ de la topologie canonique de $O _{\ZZ} (\hdag U) _{\Q}$-module de type fini qui en 
fait un $O _{\ZZ} (\hdag U) _{\Q}$-module de type LB (voir \ref{lemm-top-can-isoc} et \ref{O-coh-LB}).
Comme $M$ est un $O _{\ZZ} (\hdag U) _{\Q}$-module projectif,
il existe alors un morphisme $O _{\ZZ} (\hdag U) _{\Q}$-linéaire  
$\theta \colon M \to (u ^* \smash{D} ^\dag _{\X  ^{\sharp}} (\hdag T) _{\Q}) ^{s}$ tel que
$\psi \circ \theta = id$.
Comme $M ^{(m _0)}$ est 
un $\widetilde{B} ^{(m _0)} _{\ZZ} (U) _{\Q}$-module de type fini,
il existe $m _1 \geq m _0$ tel que, pour tout $m \geq m _1$, 
on ait 
$\theta (M ^{(m _0)}) \subset (u ^* \widetilde{D} ^{(m)}  _{\X ^{\sharp},\Q}) ^{s}$.
Ainsi, cette section $\theta$ se factorise (de manière unique) en un morphisme $\widetilde{B} ^{(m _0)} _{\ZZ} (U) _{\Q}$-linéaire
de la forme
$M^{(m _0)} \to (u ^* \widetilde{D} ^{(m)}  _{\X ^{\sharp},\Q}) ^{s}$. 
D'où la factorisation $\widetilde{\B} ^{(m )} _{\ZZ} (U) _{\Q}$-linéaire unique 
$\theta ^{(m)}\colon M ^{(m )} \to (u ^* \widetilde{D} ^{(m)}  _{\X ^{\sharp},\Q}) ^{s}$ pour tout $m \geq m _1$.
Or,
une base de voisinages de zéro sur $(u ^* \widetilde{D} ^{(m)}  _{\X ^{\sharp},\Q}) ^{s}$ est donnée
par la famille de $\widetilde{B} ^{(m )} _{\ZZ} (U)$-modules $( p ^{n}  (u ^* \widetilde{D} ^{(m)}  _{\X ^{\sharp}}) ^{s}) _{n\in\N}$.
On en déduit que le morphisme
$\theta ^{(m)}$ de $\widetilde{B} ^{(m )} _{\ZZ} (U) _{\Q}$-modules de Banach est alors
continue. Par composition de morphismes continus,
il en est donc de même $M ^{(m)} \to (u ^* \smash{D} ^\dag _{\X  ^{\sharp}} (\hdag T) _{\Q}) ^{s}$.
Par passage à la limite sur le niveau $ m \geq m _1$, 
la section $\theta \colon M\to (u ^* \smash{D} ^\dag _{\X  ^{\sharp}} (\hdag T) _{\Q}) ^{s}$
est alors continue.

\medskip
2) {\it Le $K$-espace $G$ de type $LB$, les $K$-espaces de Banach $G ^{(m)}$.}
Avec les topologies canoniques respectives (voir les définitions de \ref{defi-topo-can}), 
d'après la proposition \ref{morphDdag->Ddagcontin}, 
l'application
$\phi$ est un morphisme continu de $K$-espaces de type $LB$ (voir \ref{DotimesD-sep-seqcomp}).
Notons 
$G := (u ^* \smash{D} ^\dag _{\X  ^{\sharp}} (\hdag T) _{\Q}) ^{r}/ \ker (\phi)$ le $K$-espace localement convexe dont
la topologie est celle qui 
fait de la projection canonique
$\pi \colon (u ^* \smash{D} ^\dag _{\X  ^{\sharp}} (\hdag T) _{\Q}) ^{r} 
\twoheadrightarrow 
G$
un morphisme strict.
Notons 
$\iota
\colon G
\hookrightarrow 
(u ^* \smash{D} ^\dag _{\X  ^{\sharp}} (\hdag T) _{\Q}) ^{s}$ le monomorphisme tel 
que $\iota \circ \pi = \phi$.
Comme $\phi$ est continue, alors $\iota$ est continu.
Comme $(u ^* \smash{D} ^\dag _{\X  ^{\sharp}} (\hdag T) _{\Q}) ^{s}$ est séparé (voir \ref{DotimesD-sep-seqcomp}), il en est alors de même de $G$.
Ainsi $G$ est un quotient séparé d'un espace de type $LB$.
D'après \ref{qsepLBestLB}, cela entraîne que $G$ est aussi un espace de type $LB$.
Plus précisément, d'après sa preuve, en notant
 $G ^{(m)}:=  (u ^* \widetilde{D} ^{(m)}  _{\X ^{\sharp},\Q}) ^{r} / ( \ker (\phi) \cap (u ^* \widetilde{D} ^{(m)}  _{\X ^{\sharp},\Q}) ^{r}  )$
muni de la topologie quotient, 
 $G ^{(m)}$ est un $K$-espace de Banach et 
l'isomorphisme canonique
$\underrightarrow{\lim} _m G ^{(m)}
\riso G$
 est aussi un homéomorphisme.
Comme par définition le morphisme 
$ (u ^* \widetilde{D} ^{(m)}  _{\X ^{\sharp},\Q}) ^{r} 
\to 
G ^{(m)}$
est strict, 
il découle de \ref{DotimesDhat-se}, 
que l'on dispose de l'épimorphisme strict
\begin{equation}
 \label{theo-u^*coh-arrow3} 
 \widetilde{D} ^{(m)}  _{\X ^{\sharp} \leftarrow \ZZ ^{\sharp},\Q} \widehat{\otimes} _{\widetilde{D} ^{(m)}_{\ZZ ^{\sharp},\Q} }
(u ^* \widetilde{D} ^{(m)}  _{\X ^{\sharp},\Q}) ^{r} 
\twoheadrightarrow  
\widetilde{D} ^{(m)}  _{\X ^{\sharp} \leftarrow \ZZ ^{\sharp},\Q} \widehat{\otimes} _{\widetilde{D} ^{(m)}_{\ZZ ^{\sharp},\Q} } G ^{(m)}.
\end{equation}
Il résulte alors de \ref{lim-surj-strict} que l'on dispose de l'épimorphisme strict:
\begin{equation}
 \label{theo-u^*coh-arrow3lim} 
 \underrightarrow{\lim} _m  
 \widetilde{D} ^{(m)}  _{\X ^{\sharp} \leftarrow \ZZ ^{\sharp},\Q} \widehat{\otimes} _{\widetilde{D} ^{(m)}_{\ZZ ^{\sharp},\Q} }
(u ^* \widetilde{D} ^{(m)}  _{\X ^{\sharp},\Q}) ^{r} 
\twoheadrightarrow
\underrightarrow{\lim} _m  
\widetilde{D} ^{(m)}  _{\X ^{\sharp} \leftarrow \ZZ ^{\sharp},\Q} \widehat{\otimes} _{\widetilde{D} ^{(m)}_{\ZZ ^{\sharp},\Q} } G ^{(m)}.
\end{equation}

\medskip
3) {\it Les applications $\phi$, $\iota$ et $\psi$ sont des morphismes stricts.}

D'après ce qui précède, on dispose 
du morphisme $O _{\ZZ, \Q}$-linéaire 
$(\iota, \theta) \colon G \oplus M
\to 
(u ^* \smash{D} ^\dag _{\X  ^{\sharp}} (\hdag T) _{\Q}) ^{s}$ 
qui est aussi une application continue et bijective entre deux $K$-espaces de type $LB$.
D'après le théorème de l'application ouverte de Banach (voir \cite[8.8]{Schneider-NonarchFuncAn}), 
le morphisme $(\iota, \theta)$ est donc un homéomorphisme. 
Comme $\phi $ est le morphisme composé
\begin{equation}
 \label{theo-u^*coh-arrow1}
\phi \colon 
(u ^* \smash{D} ^\dag _{\X  ^{\sharp}} (\hdag T) _{\Q}) ^{r}
\underset{\pi}{\twoheadrightarrow} 
G 
\subset 
G \oplus M
\underset{(\iota, \theta)}{\riso} 
(u ^* \smash{D} ^\dag _{\X  ^{\sharp}} (\hdag T) _{\Q}) ^{s},
\end{equation}
le morphisme $\phi$ est alors strict 
(ce qui d'ailleurs est équivalent à la propriété que le morphisme $\iota$ est strict). 
On remarque que, comme $M$ est un $K$-espace séparé, 
alors $G$ est un fermé de $G \oplus M$ (car homéomorphe au fermé
$G \oplus \{ 0 \}$).
Enfin, comme $\iota (G) =\mathrm{Im} (\phi) =\ker (\psi)$, on obtient le carré commutatif canonique 
\begin{equation}
\label{carre-comm-u*psi}
\xymatrix @ R=0,3cm{
{G \oplus M} 
\ar[d] ^-{(0, id)}
\ar[r] ^-{(\iota, \theta)} _-{\sim}
& 
{(u ^* \smash{D} ^\dag _{\X  ^{\sharp}} (\hdag T) _{\Q}) ^{s}} 
\ar[d] ^-{\psi}
\\ 
{M} 
\ar@{=}[r] ^-{}
& 
{M,} 
}
\end{equation}
dont on sait déjà que toutes les flèches autres que $\psi$ sont continues et dont la flèche du haut est un homéomorphisme.
On en déduit la continuité de $\psi$. Comme $\psi$ est un morphisme continu et surjectif entre deux 
espaces de type $LB$, on en déduit que $\psi$ est un morphisme strict.

\medskip
4) {\it Constructions et propriétés de $H ^{(m)}$ et $N ^{(m)}$.}

Dans toute la suite de la preuve, $m$ sera toujours par défaut un entier plus grand que $m_1 \geq m _0$.
Notons 
$H ^{(m)}:= \iota  ^{-1} ((u ^* \widetilde{D} ^{(m)}  _{\X ^{\sharp},\Q}) ^{s})$
et
$\iota ^{(m)} \colon 
H ^{(m)}
\hookrightarrow
 (u ^* \widetilde{D} ^{(m)}  _{\X ^{\sharp},\Q}) ^{s}$ le morphisme $\widetilde{D} ^{(m)}  _{\ZZ ^{\sharp},\Q}$-linéaire induit par $\iota$.
 On munit $H ^{(m)}$ de l'unique topologie telle que 
$\iota ^{(m)}$ soit un morphisme strict (avec $(u ^* \widetilde{D} ^{(m)}  _{\X ^{\sharp},\Q}) ^{s}$ muni de
sa topologie canonique de $K$-espace de Banach).
On a vu au cours de l'étape $3$ que $\iota (G)$ est fermé. 
Il en résulte que $\iota ^{(m)} ( H ^{(m)}) $ est un fermé
de $(u ^* \widetilde{D} ^{(m)}  _{\X ^{\sharp},\Q}) ^{s} $
qui est un $\widetilde{B} ^{(m)} _{\ZZ}(U) _{\Q}$-module de Banach.
Ainsi, $H ^{(m)}$ est aussi un $\widetilde{B} ^{(m)} _{\ZZ}(U) _{\Q}$-module de Banach.
Notons alors $N ^{(m)}:= \mathrm{Im} (\psi ^{(m)})$
et
$\psi ^{\prime (m)}\colon 
(u ^* \widetilde{D} ^{(m)}  _{\X ^{\sharp},\Q}) ^{s} 
\twoheadrightarrow 
N ^{(m)}$
l'épimorphisme canonique factorisant $\psi ^{(m)}$.
On obtient alors sur $N ^{(m)}$ une structure de 
$\widetilde{B} ^{(m)} _{\ZZ}(U) _{\Q}$-module de Banach 
qui fait de l'épimorphisme $\widetilde{B} ^{(m)} _{\ZZ}(U) _{\Q}$-linéaire
$\psi ^{\prime (m)}\colon 
(u ^* \widetilde{D} ^{(m)}  _{\X ^{\sharp},\Q}) ^{s} 
\twoheadrightarrow 
N ^{(m)}$
une application stricte.
On dispose ainsi du diagramme commutatif
\begin{equation}
\label{theo-u^*coh-s.e.1}
\xymatrix @C=2cm @R=0,3cm {
{0} \ar[r] ^-{} & {G } \ar[r] ^-{\iota } & { (u ^* \smash{D} ^\dag _{\X  ^{\sharp}} (\hdag T) _{\Q}) ^{s}} \ar[r] ^-{\psi} & { M} \ar[r] ^-{}& { 0} 
\\ 
{0} 
\ar[r] ^-{}
& 
{ H ^{(m)}  }
\ar[r] ^-{\iota ^{(m)} } 
\ar@{^{(}->}[u] ^-{}
& 
{(u ^* \widetilde{D} ^{(m)}  _{\X ^{\sharp},\Q}) ^{s}  } 
\ar[r] ^-{\psi ^{\prime (m)}} 
\ar[ru] ^-{\psi ^{(m)}} 
\ar@{^{(}->}[u] ^-{}
& 
{N ^{(m)}}\ar[r] ^-{}
\ar@{^{(}->}[u] ^-{}
& { 0} 
}
\end{equation}
dont les morphismes horizontaux sont stricts et forment deux suites exactes courtes 
(la première est d'ailleurs scindée via $\theta$). 
Comme le composé de $H ^{(m)} \subset G$ avec $\iota$ est continu,
comme $\iota $ est un monomorphisme strict, 
on remarque que l'inclusion $H ^{(m)} \subset G$ est alors continue.
Enfin, comme $\psi ^{(m)}$ est continu, l'inclusion
$N ^{(m)} \subset M$ l'est aussi.

Grâce à \ref{DotimesDhat-se}, 
la suite exacte courte du bas de \ref{theo-u^*coh-s.e.1} induit la suite exacte 
avec morphismes stricts:
\begin{equation}
\label{theo-u^*coh-s.e.1-u+}
0
\to
\widetilde{D} ^{(m)}  _{\X ^{\sharp} \leftarrow \ZZ ^{\sharp},\Q} \widehat{\otimes} _{\widetilde{D} ^{(m)}_{\ZZ ^{\sharp},\Q} }
 H ^{(m)}
\underset{id \widehat{\otimes} \iota ^{(m)} }{\longrightarrow}  
\widetilde{D} ^{(m)}  _{\X ^{\sharp} \leftarrow \ZZ ^{\sharp},\Q} \widehat{\otimes} _{\widetilde{D} ^{(m)}_{\ZZ ^{\sharp},\Q} }
(u ^* \widetilde{D} ^{(m)}  _{\X ^{\sharp},\Q}) ^{s}
\underset{id \widehat{\otimes} \psi ^{\prime (m)}}{\longrightarrow}  
\widetilde{D} ^{(m)}  _{\X ^{\sharp} \leftarrow \ZZ ^{\sharp},\Q} \widehat{\otimes} _{\widetilde{D} ^{(m)}_{\ZZ ^{\sharp},\Q} }
N ^{(m)}
\to 
 0.
\end{equation}
En passant à la limite sur le niveau, il en résulte la suite exacte 
avec morphismes continues:
\begin{equation}
\label{theo-u^*coh-s.e.1-u+-lim}
0
\to
 \underset{m}{\underrightarrow{\lim}}\, \widetilde{D} ^{(m)}  _{\X ^{\sharp} \leftarrow \ZZ ^{\sharp},\Q} \widehat{\otimes} _{\widetilde{D} ^{(m)}_{\ZZ ^{\sharp},\Q} }
 H ^{(m)}
\to  
 \underset{m}{\underrightarrow{\lim}}\, \widetilde{D} ^{(m)}  _{\X ^{\sharp} \leftarrow \ZZ ^{\sharp},\Q} \widehat{\otimes} _{\widetilde{D} ^{(m)}_{\ZZ ^{\sharp},\Q} }
(u ^* \widetilde{D} ^{(m)}  _{\X ^{\sharp},\Q}) ^{s}
\to  
 \underset{m}{\underrightarrow{\lim}}\, \widetilde{D} ^{(m)}  _{\X ^{\sharp} \leftarrow \ZZ ^{\sharp},\Q} \widehat{\otimes} _{\widetilde{D} ^{(m)}_{\ZZ ^{\sharp},\Q} }
N ^{(m)}
\to 
 0.
\end{equation}

\medskip
5) {\it Passage à la limite sur le niveau pour $G ^{(m)}$ et $H ^{(m)}$: comparaison.}

Notons
$j _m \colon G ^{(m)} \to G$ le monomorphisme canonique continue. 
Pour tout $m \geq m _1$, comme
le morphisme $\phi$ se factorise par le morphisme continu
$\phi  ^{(m)} \colon (u ^* \widetilde{D} ^{(m)}  _{\X ^{\sharp},\Q}) ^{r}  \to (u ^* \widetilde{D} ^{(m)}  _{\X ^{\sharp},\Q}) ^{s} $,
on obtient alors l'inclusion
$\iota \circ j _{m}(G ^{(m)} ) \subset  (u ^{*}\widetilde{D} ^{(m)}  _{\X ^{\sharp},\Q}) ^{s} $
et donc
$j _{m}(G ^{(m)} ) \subset  H ^{(m)}$.
Le monomorphisme continu $j _m$ se factorise donc (de manière unique) par
un monomorphisme continu de la forme
$G ^{(m)} \to H ^{(m)}$.

Réciproquement, 
comme $H ^{(m)}$ est un $K$-espace de Banach, 
comme 
$G= \cup _{m \in \N} j _{m} (G ^{(m)})$,
d'après \cite[8.9]{Schneider-NonarchFuncAn}, 
il existe $N _{m}\geq m$ assez grand tel que 
$H ^{(m)} \subset G$ est le composé d'un morphisme (unique) continu de la forme
$H ^{(m)} \to G ^{(N _{m})}$ suivi de $j _{N _m}$.
Il ne coûte rien de supposer que la suite $(N _{m} ) _{m \geq m _1}$ est strictement croissante.

D'après \ref{DwidehatD-inj}, 
on dispose alors des monomorphismes continus:
\begin{equation}
\label{theo-u^*coh-inclusions}
\widetilde{D} ^{(m)}  _{\X ^{\sharp} \leftarrow \ZZ ^{\sharp},\Q} \widehat{\otimes} _{\widetilde{D} ^{(m)}_{\ZZ ^{\sharp},\Q} } H ^{(m)}
\hookrightarrow 
\widetilde{D} ^{(m)}  _{\X ^{\sharp} \leftarrow \ZZ ^{\sharp},\Q} \widehat{\otimes} _{\widetilde{D} ^{(m)}_{\ZZ ^{\sharp},\Q} } G ^{(N _m)}
\hookrightarrow
\widetilde{D} ^{(m)}  _{\X ^{\sharp} \leftarrow \ZZ ^{\sharp},\Q} \widehat{\otimes} _{\widetilde{D} ^{(m)}  _{\ZZ ^{\sharp},\Q}} H ^{(N _m)}.
\end{equation}
En passant la suite \ref{theo-u^*coh-inclusions} à la limite inductive sur le niveau, grâce \ref{DotimesD-sep-seqcomp-iso1}, 
son composé est un homéomorphisme. 
Comme le foncteur de passage à la limite inductive sur le niveau préserve la continuité et l'injectivité, 
on en déduit que le morphisme canonique horizontal du haut du carré
\begin{equation}
\label{theo-u^*coh-inclusions-lim}
\xymatrix @R=0,3cm {
{\underrightarrow{\lim} _m  
\widetilde{D} ^{(m)}  _{\X ^{\sharp} \leftarrow \ZZ ^{\sharp},\Q} \widehat{\otimes} _{\widetilde{D} ^{(m)}_{\ZZ ^{\sharp},\Q} } G ^{(N _m)}} 
\ar[r] ^-{}
& 
{\underrightarrow{\lim} _m  
\widetilde{D} ^{(m)}  _{\X ^{\sharp} \leftarrow \ZZ ^{\sharp},\Q} \widehat{\otimes} _{\widetilde{D} ^{(m)}_{\ZZ ^{\sharp},\Q} } H ^{(N _m)}
} 
\\ 
{\underrightarrow{\lim} _m  
\widetilde{D} ^{(m)}  _{\X ^{\sharp} \leftarrow \ZZ ^{\sharp},\Q} \widehat{\otimes} _{\widetilde{D} ^{(m)}_{\ZZ ^{\sharp},\Q} } G ^{(m)}} 
\ar[u] ^-{\sim}
\ar[r] ^-{}
& 
{\underrightarrow{\lim} _m  
\widetilde{D} ^{(m)}  _{\X ^{\sharp} \leftarrow \ZZ ^{\sharp},\Q} \widehat{\otimes} _{\widetilde{D} ^{(m)}_{\ZZ ^{\sharp},\Q} } H ^{(m)},} 
\ar[u] ^-{\sim}
}
\end{equation}
est un homéomorphisme.
Comme les morphismes verticaux sont des homéomorphismes (voir \ref{DotimesD-sep-seqcomp-iso1}), il en est de même de la flèche du bas.

\medskip
6) {\it Passage à la limite sur le niveau pour $M ^{(m)}$ et $N ^{(m)}$: comparaison.}

Comme $\psi ^{(m)}$ est continu, 
pour tout $m \geq m _1$,
d'après \cite[8.9]{Schneider-NonarchFuncAn}, 
il existe $N _{m}\geq m$ assez grand tel que 
 $\psi ^{(m)}\colon 
(u ^* \widetilde{D} ^{(m)}  _{\X ^{\sharp},\Q}) ^{s} 
\to 
M$ est la composition d'un morphisme continu (forcément unique) de la forme
$(u ^* \widetilde{D} ^{(m)}  _{\X ^{\sharp},\Q}) ^{s}  
\to M ^{(N _{m})}$ suivi du monomorphisme 
$M ^{(N _{m})}\hookrightarrow M$.
Comme $N ^{(m)} = \mathrm{Im} (\psi ^{(m)})= \mathrm{Im} (\psi ^{\prime (m)})$, 
ce morphisme $(u ^* \widetilde{D} ^{(m)}  _{\X ^{\sharp},\Q}) ^{s}  
\to M ^{(N _{m})}$ se décompose 
de manière unique en
$(u ^* \widetilde{D} ^{(m)}  _{\X ^{\sharp},\Q}) ^{s}  
\underset{\psi ^{\prime (m)}}{\twoheadrightarrow}
N ^{(m)}
\hookrightarrow M ^{(N _{m})}$.
Le morphisme $\psi ^{(m)}$ induit donc le morphisme composé ci-dessous noté abusivement
$$ u _+ ^{\sharp (m)} (\psi ^{(m)})
\colon 
\widetilde{D} ^{(m)}  _{\X ^{\sharp} \leftarrow \ZZ ^{\sharp},\Q} \widehat{\otimes} _{\widetilde{D} ^{(m)}_{\ZZ ^{\sharp},\Q} }
(u ^* \widetilde{D} ^{(m)}  _{\X ^{\sharp},\Q}) ^{s}
\underset{id \widehat{\otimes} \psi ^{\prime (m)}}{\twoheadrightarrow}
\widetilde{D} ^{(m)}  _{\X ^{\sharp} \leftarrow \ZZ ^{\sharp},\Q} \widehat{\otimes} _{\widetilde{D} ^{(m)}_{\ZZ ^{\sharp},\Q} }
N ^{(m)}
\to  
 \underset{m}{\underrightarrow{\lim}}\, \widetilde{D} ^{(m)}  _{\X ^{\sharp} \leftarrow \ZZ ^{\sharp},\Q} \widehat{\otimes} _{\widetilde{D} ^{(m)}_{\ZZ ^{\sharp},\Q} }
M ^{(m)}.$$
La notation $u _+ ^{\sharp (m)} (\psi ^{(m)})$ est justifiée par le fait que ce morphisme composé 
 ne dépend pas du choix de $N _m$ mais seulement du niveau $m$ fixé, de $\psi ^{(m)}$ et
de l'immersion fermée $u$.
En passant à la limite sur le niveau, on obtient les morphismes continues:
\begin{equation}
\label{limmu+psim}
\underset{m}{\underrightarrow{\lim}}\,  u _+ ^{\sharp  (m)} (\psi ^{(m)})
\colon 
 \underset{m}{\underrightarrow{\lim}}\, \widetilde{D} ^{(m)}  _{\X ^{\sharp} \leftarrow \ZZ ^{\sharp},\Q} \widehat{\otimes} _{\widetilde{D} ^{(m)}_{\ZZ ^{\sharp},\Q} }
(u ^* \widetilde{D} ^{(m)}  _{\X ^{\sharp},\Q}) ^{s}
\underset{id \widehat{\otimes} \psi ^{\prime (m)}}{\twoheadrightarrow}
 \underset{m}{\underrightarrow{\lim}}\, \widetilde{D} ^{(m)}  _{\X ^{\sharp} \leftarrow \ZZ ^{\sharp},\Q} \widehat{\otimes} _{\widetilde{D} ^{(m)}_{\ZZ ^{\sharp},\Q} }
N ^{(m)}
\to  
 \underset{m}{\underrightarrow{\lim}}\, \widetilde{D} ^{(m)}  _{\X ^{\sharp} \leftarrow \ZZ ^{\sharp},\Q} \widehat{\otimes} _{\widetilde{D} ^{(m)}_{\ZZ ^{\sharp},\Q} }
M ^{(m)}.
\end{equation}
Comme, pour tout $m \geq m _1$, le morphisme $\psi ^{(m)}\circ \theta ^{(m)}$
est l'inclusion canonique
$M ^{(m)} \hookrightarrow M$, 
ce dernier se factorise alors canoniquement en 
$M ^{(m)} \hookrightarrow N ^{(m)} \subset M$. 
On en déduit comme pour l'étape $5$ que la flèche de droite de \ref{limmu+psim}
est un homéomorphisme.

\medskip
7) {\it Première conclusion.} 

En composant \ref{theo-u^*coh-arrow3lim} avec l'isomorphisme du bas de \ref{theo-u^*coh-inclusions-lim}, on obtient l'épimorphisme strict de la suite:
\begin{equation}
 \label{theo-u^*coh-epim-monom} 
 \underrightarrow{\lim} _m  
 \widetilde{D} ^{(m)}  _{\X ^{\sharp} \leftarrow \ZZ ^{\sharp},\Q} \widehat{\otimes} _{\widetilde{D} ^{(m)}_{\ZZ ^{\sharp},\Q} }
(u ^* \widetilde{D} ^{(m)}  _{\X ^{\sharp},\Q}) ^{r} 
\twoheadrightarrow
\underrightarrow{\lim} _m  
\widetilde{D} ^{(m)}  _{\X ^{\sharp} \leftarrow \ZZ ^{\sharp},\Q} \widehat{\otimes} _{\widetilde{D} ^{(m)}_{\ZZ ^{\sharp},\Q} } H ^{(m)}
\hookrightarrow  
 \underset{m}{\underrightarrow{\lim}}\, \widetilde{D} ^{(m)}  _{\X ^{\sharp} \leftarrow \ZZ ^{\sharp},\Q} \widehat{\otimes} _{\widetilde{D} ^{(m)}_{\ZZ ^{\sharp},\Q} }
(u ^* \widetilde{D} ^{(m)}  _{\X ^{\sharp},\Q}) ^{s} ,
\end{equation}
la seconde flèche étant le monomorphisme de  \ref{theo-u^*coh-s.e.1-u+-lim}.
On déduit alors de la suite exacte \ref{theo-u^*coh-s.e.1-u+-lim} et du fait que la flèche de droite de \ref{limmu+psim} est homéomorphisme, la suite exacte:
\begin{equation}
\label{theo-u^*coh-s.e.2}
 \underset{m}{\underrightarrow{\lim}}\, \widetilde{D} ^{(m)}  _{\X ^{\sharp} \leftarrow \ZZ ^{\sharp},\Q} \widehat{\otimes} _{\widetilde{D} ^{(m)}_{\ZZ ^{\sharp},\Q} }
(u ^* \widetilde{D} ^{(m)}  _{\X ^{\sharp},\Q}) ^{r}
\underset{\alpha ^{\dag} _Z}{\longrightarrow}  
 \underset{m}{\underrightarrow{\lim}}\, \widetilde{D} ^{(m)}  _{\X ^{\sharp} \leftarrow \ZZ ^{\sharp},\Q} \widehat{\otimes} _{\widetilde{D} ^{(m)}_{\ZZ ^{\sharp},\Q} }
(u ^* \widetilde{D} ^{(m)}  _{\X ^{\sharp},\Q}) ^{s}
\underset{\beta ^{\dag} _Z}{\longrightarrow}  
 \underset{m}{\underrightarrow{\lim}}\, \widetilde{D} ^{(m)}  _{\X ^{\sharp} \leftarrow \ZZ ^{\sharp},\Q} \widehat{\otimes} _{\widetilde{D} ^{(m)}_{\ZZ ^{\sharp},\Q} }
M ^{(m)}
\to 
 0,
\end{equation}
où $\alpha ^{\dag} _Z:=  
id \widehat{\otimes} _{\widetilde{D} ^{(m)}_{\ZZ ^{\sharp},\Q} } \Gamma (\ZZ, u ^{*} \alpha ^{(m)} )$, qui est égale
au composé des deux morphismes de \ref{theo-u^*coh-epim-monom}, 
et où, avec les notations de l'étape $6$,
$\beta ^{\dag} _Z:= 
\underset{m}{\underrightarrow{\lim}}\,  u _+ ^{\sharp (m)} (\Gamma (\ZZ, u ^{*} \beta ^{(m)} ))$.

\medskip
8) {\it Faisceautisation.} 

i) Soit $\mathfrak{B}$ la base de voisinages de $\X$ des ouverts affines. 
Par définition de $\M ^{(m)}$, on vérifie que
le préfaisceau sur $\mathfrak{B}$ défini par
$\U \in \mathfrak{B} \mapsto 
\underset{m}{\underrightarrow{\lim}}\, 
\widetilde{D} ^{(m)} _{\U^{\sharp} \leftarrow \ZZ^{\sharp} \cap \U^{\sharp},\Q} \widehat{\otimes} _{\widetilde{D} ^{(m)}_{\ZZ ^{\sharp}\cap \U^{\sharp},\Q}}
\Gamma (\U \cap \ZZ, \M ^{(m)})$
est en fait un faisceau dont le faisceau sur $\X$
est 
$u _+ ^\sharp ( \M)$.

ii) Avec les notations de \ref{faisceautisation-hat},
comme le foncteur faisceau associé à un préfaisceau (d'ensemble) commute 
aux limites inductives filtrantes, 
le faisceau associé au préfaisceau 
$\U \in \mathfrak{B} \mapsto 
\underset{m}{\underrightarrow{\lim}}\, \widetilde{D} ^{(m)} _{\U ^{\sharp} \leftarrow \ZZ^{\sharp} \cap \U^{\sharp},\Q} \widehat{\otimes} _{\widetilde{D} ^{(m)}_{\ZZ^{\sharp} \cap \U^{\sharp},\Q}}
\Gamma (\U \cap \ZZ, u ^{*} (( \widetilde{\D} ^{(m)} _{\X^{\sharp},\Q}) ^{r}) )$
est
$\underset{m}{\underrightarrow{\lim}}\, \widetilde{\D} ^{(m)} _{\X ^{\sharp} \leftarrow \ZZ ^{\sharp},\Q} \widehat{\otimes} _{\widetilde{\D} ^{(m)}_{\ZZ ^{\sharp},\Q} }
(u ^* \widetilde{\D} ^{(m)} _{\X^{\sharp},\Q}) ^{r}
$, de même pour un autre entier que $r$.

iii) Pour tout $\U \in \mathfrak{B}$, 
notons $u^{\sharp} |\U  \colon \ZZ ^{\sharp}\cap \U^{\sharp} \hookrightarrow \U^{\sharp}$ l'immersion fermée exacte induite par $u^{\sharp}$.
Pour tout $m \geq m _1$, on dispose de la suite (par forcément exacte):
$ (\widetilde{\D} ^{(m)}_{\U^{\sharp},\Q}) ^{r} \underset{\alpha ^{(m)} |\U}{\longrightarrow} (\widetilde{\D} ^{(m)}_{\U^{\sharp},\Q}) ^{s}
 \underset{\beta ^{(m)} |\U}{\longrightarrow} 
 \E |\U \to 0$.
Comme pour 
\ref{theo-u^*coh-s.e.2}, (il suffit de remplacer $u^{\sharp}$ par $u ^{\sharp}|\U$),
on obtient alors la suite exacte
\small
$$
\underset{m}{\underrightarrow{\lim}}\, \widetilde{D} ^{(m)} _{\U \leftarrow \ZZ \cap \U,\Q} \widehat{\otimes} 
\Gamma (\U \cap \ZZ, u ^{*} (( \widetilde{\D} ^{(m)} _{\X^{\sharp},\Q}) ^{r}) )
\underset{\alpha ^{\dag} _\U}{\longrightarrow}  
\underset{m}{\underrightarrow{\lim}}\, \widetilde{D} ^{(m)} _{\U \leftarrow \ZZ \cap \U,\Q} \widehat{\otimes} 
\Gamma (\U \cap \ZZ, u ^{*} (( \widetilde{\D} ^{(m)} _{\X^{\sharp},\Q}) ^{s}) )
\underset{\beta ^{\dag} _\U}{\longrightarrow}  
\underset{m}{\underrightarrow{\lim}}\, \widetilde{D} ^{(m)} _{\U \leftarrow \ZZ \cap \U,\Q} \widehat{\otimes} 
\Gamma (\U \cap \ZZ, u ^{*} ( \M ^{(m)})
\to 0.
$$
\normalsize
où
$\widehat{\otimes}$
désigne 
$\widehat{\otimes} _{\widetilde{D} ^{(m)}_{\ZZ ^{\sharp}\cap \U^{\sharp},\Q}}$
et où
$\alpha ^{\dag} _\U:=
id
\widehat{\otimes} 
\Gamma (\U \cap \ZZ, u ^{*} \alpha ^{(m)} )$
et 
$\beta ^{\dag} _\U:= 
\underset{m}{\underrightarrow{\lim}}\,  (u ^{\sharp} |\U )_{+} ^{(m)} (\Gamma (\U \cap \ZZ, u ^{*} \beta ^{(m)} ))$.
Comme ces suites exactes sont fonctoriels en $\U$,
comme le foncteur faisceau associé à un préfaisceau est exact, 
on obtient alors la suite exacte:
\begin{equation}
\label{theo-u^*coh-s.e.2faisc}
\underset{m}{\underrightarrow{\lim}}\, \widetilde{\D} ^{(m)} _{\X ^{\sharp} \leftarrow \ZZ ^{\sharp},\Q} \widehat{\otimes} _{\widetilde{\D} ^{(m)}_{\ZZ ^{\sharp},\Q} }
(u ^* \widetilde{\D} ^{(m)} _{\X^{\sharp},\Q}) ^{r}
\underset{\underset{m}{\underrightarrow{\lim}}\, 
id \widehat{\otimes} 
u ^{*} \alpha ^{(m)} }{\longrightarrow}  
 \underset{m}{\underrightarrow{\lim}}\, \widetilde{\D} ^{(m)} _{\X ^{\sharp} \leftarrow \ZZ ^{\sharp},\Q} \widehat{\otimes} _{\widetilde{\D} ^{(m)}_{\ZZ ^{\sharp},\Q} }
(u ^* \widetilde{\D} ^{(m)} _{\X^{\sharp},\Q}) ^{s}
\to  
u _+ ^\sharp ( \M)
\to 
 0.
\end{equation}

\bigskip
9) {\it Fin de la preuve.} 
Comme le foncteur $\mathcal{H} ^{\dag e} _{Z}$ est exact à droite (dans la catégorie des 
$ \smash{\D} ^\dag _{\X ^{\sharp}} (\hdag T) _{\Q}$-modules cohérents), 
alors 
le conoyau de $\mathcal{H} ^{\dag e} _{Z} (\phi)$ est isomorphe à $\mathcal{H} ^{\dag e} _{Z} (\E)$.
Or,  
il découle du lemme \ref{lemm-u^*coh} et de la suite exacte \ref{theo-u^*coh-s.e.2faisc},
que le conoyau 
de $\mathcal{H} ^{\dag e} _{Z} (\phi)$ est isomorphe à
$u _+ ^\sharp  \M$.
Comme $u ^{*} (\E) = \M$, on a ainsi démontré
$\mathcal{H} ^{\dag e} _{Z} (\E) \riso  u _+ ^{\sharp} u ^{*} (\E)$.
Enfin, comme $u$ est propre et
$u ^{*} (\E)$ est un $\smash{\D} ^\dag _{\ZZ ^\sharp} (\hdag U) _{\Q}$-module cohérent, 
alors 
$ u _+ ^{\sharp} u ^{*} (\E)$ est un $\smash{\D} ^\dag _{\X ^{\sharp}} (\hdag T) _{\Q}$-module cohérent.
\end{proof}

 \begin{coro}
\label{coro-theo-u^*coh}
On suppose que $u\colon \ZZ  \hookrightarrow \X$ est 
de codimension pure $1$.
Soit 
$\E $ un $\smash{\D} ^\dag _{\X ^{\sharp}} (\hdag T) _{\Q}$-module cohérent
tel que $\mathcal{H} ^{0} u ^{!\sharp} (\E)$ soit un $\smash{\D} ^\dag _{\ZZ ^\sharp} (\hdag U) _{\Q}$-module cohérent
et
que $\mathcal{H} ^{1} u ^{!\sharp} (\E)$ soit un isocristal sur $\ZZ ^\sharp$ surconvergent le long de $U$.
Le complexe
$\R \underline{\Gamma} ^{\dag} _Z (\E)$ est alors $\smash{\D} ^\dag _{\X ^{\sharp}} (\hdag T) _{\Q}$-cohérent.

\end{coro}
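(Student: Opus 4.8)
Le plan est de ramener l'\'enonc\'e aux deux r\'esultats qui le pr\'ec\`edent, la proposition \ref{stab-coh-loc-H0} et le th\'eor\`eme \ref{theo-u^*coh}, en traitant s\'epar\'ement les deux seuls degr\'es de cohomologie de $\R \underline{\Gamma} ^{\dag} _Z (\E)$. La coh\'erence \'etant locale sur $\X$, je me ram\`enerais d'abord au cas o\`u $\X$, et donc $\ZZ$, est affine.

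Je commencerais par l'encadrement cohomologique. Comme $u$ est de codimension pure $1$, le complexe $\L u ^{*} (\E)$ a sa cohomologie concentr\'ee en degr\'es $-1$ et $0$, de sorte que $u ^{!\sharp} (\E) = \L u ^{*} (\E)[-1]$ n'a de cohomologie qu'en degr\'es $0$ et $1$, avec $\mathcal{H} ^{1} u ^{!\sharp} (\E) = u ^{*} (\E)$. En combinant l'isomorphisme de foncteurs $\R \underline{\Gamma} ^{\dag} _{Z} \cong u ^{\sharp} _{+} \circ u ^{!\sharp}$ (rappel\'e avant \ref{BK-cohloc}) et l'exactitude de l'image directe $u ^{\sharp} _{+}$ par l'immersion ferm\'ee $u$, j'en d\'eduirais que $\R \underline{\Gamma} ^{\dag} _Z (\E)$ est concentr\'e en degr\'es $0$ et $1$, ses espaces de cohomologie \'etant $\mathcal{H} ^{\dag 0} _{Z} (\E)$ et $\mathcal{H} ^{\dag 1} _{Z} (\E)$. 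Il suffirait alors d'\'etablir la coh\'erence de chacun de ces deux modules.

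Pour le degr\'e $0$, j'invoquerais directement la proposition \ref{stab-coh-loc-H0} : l'hypoth\`ese que $\mathcal{H} ^{0} u ^{!\sharp} (\E)$ est $\smash{\D} ^\dag _{\ZZ ^\sharp} (\hdag U) _{\Q}$-coh\'erent \'equivaut \`a la coh\'erence de $\mathcal{H} ^{\dag 0} _{Z} (\E)$. Pour le degr\'e maximal, j'appliquerais le th\'eor\`eme \ref{theo-u^*coh} dans le cas $e = 1$ : sa condition \ref{theo-u^*coh-(2)} est exactement l'hypoth\`ese que $u ^{*} (\E) = \mathcal{H} ^{1} u ^{!\sharp} (\E)$ est un isocristal sur $\ZZ ^\sharp$ surconvergent le long de $U$. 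Le seul point \`a v\'erifier est sa condition \ref{theo-u^*coh-(1)}, ici r\'eduite au cas $i = 0$ : il faut voir que la coh\'erence de $\mathcal{H} ^{0} u ^{!\sharp} (\E)$ entra\^ine, localement sur $\ZZ$ suppos\'e affine, sa $\Gamma (\ZZ,-)$-acyclicit\'e. Cela r\'esulte du th\'eor\`eme de type $B$ de Berthelot, qui assure que tout module coh\'erent sur un sch\'ema formel affine est acyclique (voir \cite[3.6.4]{Be1}). Le th\'eor\`eme \ref{theo-u^*coh} donne alors la coh\'erence de $\mathcal{H} ^{\dag 1} _{Z} (\E)$.

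Je conclurais que $\R \underline{\Gamma} ^{\dag} _Z (\E)$ est born\'e \`a cohomologie coh\'erente, donc appartient \`a $D ^{\mathrm{b}} _{\mathrm{coh}} ( \smash{\D} ^\dag _{\X ^{\sharp}} (\hdag T) _{\Q} )$. La difficult\'e de fond ne r\'eside pas dans cet assemblage, essentiellement formel, mais dans le th\'eor\`eme \ref{theo-u^*coh} sous-jacent ; la seule v\'erification propre \`a ce corollaire est le passage de la coh\'erence \`a l'acyclicit\'e de $\mathcal{H} ^{0} u ^{!\sharp} (\E)$ via le th\'eor\`eme de type $B$.
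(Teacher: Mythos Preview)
Your proof is correct and follows the same route as the paper's one-line argument, which simply invokes \ref{stab-coh-loc-H0} and \ref{theo-u^*coh}; you have merely unpacked the verification that the hypotheses of \ref{theo-u^*coh} are satisfied, the key point being that coherence of $\mathcal{H}^{0} u^{!\sharp}(\E)$ gives the required $\Gamma(\ZZ,-)$-acyclicity via Theorem~B. One small remark: the isomorphism $\R\underline{\Gamma}^{\dag}_{Z} \cong u^{\sharp}_{+}\circ u^{!\sharp}$ you cite is stated before \ref{BK-cohloc} at the level of inductive systems (i.e.\ $u^{\sharp(\bullet)}_{+}\circ u^{\sharp(\bullet)!}$), not for $\D^{\dag}$-modules directly (cf.\ \ref{rema-3.4.10}); this suffices for your concentration argument, but it is simpler here to read off the concentration in degrees $0$ and $1$ from the localization triangle, using that $(\hdag Z)$ is exact since $Z$ is a divisor.
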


\begin{proof}
La $\smash{\D} ^\dag _{\X ^{\sharp}} (\hdag T) _{\Q}$-cohérence du complexe 
$\R \underline{\Gamma} ^{\dag} _Z (\E)$ résulte de \ref{stab-coh-loc-H0} et de \ref{theo-u^*coh}.
\end{proof}

\begin{rema}
\label{rema-fin}
Avec leurs notations, 
le corollaire \ref{coro-theo-u^*coh} ainsi que le théorème \ref{5assert-eq}
ci-dessous donnent une condition suffisante sur $\E$ pour valider l'implication
$ u ^{!\sharp} (\E) \in 
D ^{\mathrm{b}} _{\mathrm{coh}}( \smash{\D} ^\dag _{\ZZ ^\sharp} (\hdag U) _{\Q} )
\Rightarrow
u ^{\sharp (\bullet)!} (\E ^{(\bullet)}) \in 
\smash{\underrightarrow{LD}} ^{\mathrm{b}} _{\Q, \mathrm{coh}} ( \smash{\widetilde{\D}} _{\ZZ ^\sharp} ^{(\bullet)})$.

\end{rema}

\begin{theo}
\label{5assert-eq}
On suppose que $u\colon \ZZ  \hookrightarrow \X$ est 
de codimension pure $1$.
Soit
$\E ^{(\bullet)}$ est un objet 
de $\smash{\underrightarrow{LD}} ^{\mathrm{b}} _{\Q, \mathrm{coh}} ( \smash{\widetilde{\D}}  _{\X ^{\sharp}} ^{(\bullet)})$
et 
$\E := \underrightarrow{\lim}  ~ (\E ^{(\bullet)}) $
l'objet de $D ^{\mathrm{b}} _{\mathrm{coh}}( \smash{\D} ^\dag _{\X ^{\sharp}} (\hdag T) _{\Q} )$ correspondant. 
Les assertions suivantes sont équivalentes:
\begin{enumerate}
\item $u ^{\sharp (\bullet)!} (\E ^{(\bullet)}) \in 
\smash{\underrightarrow{LD}} ^{\mathrm{b}} _{\Q, \mathrm{coh}} ( \smash{\widetilde{\D}} _{\ZZ ^{\sharp}} ^{(\bullet)})$.
\item $\R \underline{\Gamma} ^{\dag} _Z (\E ^{(\bullet)}) \in 
\smash{\underrightarrow{LD}} ^{\mathrm{b}} _{\Q, \mathrm{coh}} ( \smash{\widetilde{\D}}  _{\X ^{\sharp}} ^{(\bullet)})$.
\item $(\hdag Z) ( \E ^{(\bullet)}) 
\in \smash{\underrightarrow{LD}} ^{\mathrm{b}} _{\Q, \mathrm{coh}} ( \smash{\widetilde{\D}}  _{\X ^{\sharp}} ^{(\bullet)} )$.
\item $(\hdag Z) ( \E) \in D ^{\mathrm{b}} _{\mathrm{coh}}( \smash{\D} ^\dag _{\X ^{\sharp}} ( \hdag T) _{\Q} )$.
\item $\R \underline{\Gamma} ^{\dag} _Z (\E) \in D ^{\mathrm{b}} _{\mathrm{coh}}( \smash{\D} ^\dag _{\X ^{\sharp}} ( \hdag T) _{\Q} )$.
\end{enumerate}
\end{theo}

\begin{proof}
Les équivalences $2 \Leftrightarrow 3$ et $4 \Leftrightarrow 5$ résultent du triangle distingué de localisation
$\R \underline{\Gamma} ^{\dag} _Z (\E ^{(\bullet)}) \to  \E ^{(\bullet)} \to (\hdag Z) ( \E ^{(\bullet)})  \to +1$.
L'équivalence $3 \Leftrightarrow 4$ est exactement le corollaire \cite[3.5.2]{caro-stab-sys-ind-surcoh}. 
L'équivalence $1 \Leftrightarrow 2$ découle de 
l'isomorphisme canonique 
$u ^{\sharp (\bullet)} _+ \circ u ^{\sharp (\bullet)!} (\E ^{(\bullet)}) 
\riso 
\R \underline{\Gamma} ^{\dag} _Z (\E ^{(\bullet)})$
(voir \cite[5.3.8.1]{caro-stab-sys-ind-surcoh}), 
de l'isomorphisme canonique
$ u ^{\sharp (\bullet)!}  \circ \R \underline{\Gamma} ^{\dag} _Z (\E ^{(\bullet)}) 
\riso
u ^{\sharp (\bullet)!} (\E ^{(\bullet)})$,
ainsi que du théorème de Berthelot-Kashiwara toujours valable dans
le contexte des catégories de la forme 
$\smash{\underrightarrow{LD}} ^{\mathrm{b}} _{\Q, \mathrm{coh}} ( \smash{\widetilde{\D}}  _{\X ^{\sharp}} ^{(\bullet)})$ 
(voir \cite[5.3.7]{caro-stab-sys-ind-surcoh}).
\end{proof}

\begin{rema}
\label{rema-3.4.10}
On garde les notations et hypothèses du théorème \ref{5assert-eq}. 

$\bullet$ 
Si l'une des conditions équivalentes du théorème \ref{5assert-eq} 
est satisfaite, alors on dispose de l'isomorphisme
$u ^{\sharp} _{+} \circ u ^{!\sharp}( \E) \riso  \R \underline{\Gamma} ^{\dag} _Z (\E)$
(dans le terme de gauche, les foncteurs $u ^{\sharp} _{+} $ et $u ^{!\sharp}$ 
sont calculés sur les catégories respectives de $\D ^{\dag}$-modules cohérents).

$\bullet$ Par contre, si on suppose seulement que
$ u ^{!\sharp}( \E) \in D ^{\mathrm{b}} _{\mathrm{coh}}( \smash{\D} ^\dag _{\ZZ ^\sharp} ( \hdag U) _{\Q} )$, 
alors il n'est ni clair que
$u ^{\sharp} _{+} \circ u ^{!\sharp}( \E) $ (les foncteurs $u ^{\sharp} _{+} $ et $u ^{!\sharp}$ 
sont calculés sur les catégories respectives de $\D ^{\dag}$-modules cohérents)
soit isomorphe à 
$\underrightarrow{\lim}   
  \circ u ^{\sharp (\bullet)}_{+} \circ u ^{\sharp (\bullet)!}(  \E ^{(\bullet)})
=
  \R \underline{\Gamma} ^{\dag} _Z (\E)$.
En effet, 
il n'est alors pas évident que la cohérence de la cohérence de $u ^{!\sharp}( \E) $ implique que 
$u ^{\sharp (\bullet)!} ( \E ^{(\bullet)}) 
\in \smash{\underrightarrow{LD}} ^{\mathrm{b}} _{\Q, \mathrm{coh}} ( \smash{\widetilde{\D}} _{\ZZ ^{\sharp}} ^{(\bullet)})$.

\end{rema}

\subsection{Applications aux log-isocristaux surconvergents}

Rappelons les conditions de non Liouvillité apparaissant dans \cite[1.1.1]{caro-Tsuzuki}:
\begin{defi}
\label{conditions*}
Soit $\E$ un isocristal sur $\X ^{\sharp}$ surconvergent le long de $T$.
On dit que $\E$ satisfait respectivement les conditions (a), (b), (p) si
le long de chacune des composantes irréductibles de 
 $D$ non incluse dans $T$ on ait
\begin{itemize}
\item [(a)] aucune différence des exposants de $\E$ ne soit un nombre $p$-adic de Liouville ;
\item [(b)] aucun des exposants de $\E$ ne soit un nombre $p$-adic de Liouville ;
\item [(p)] aucun des exposants de $\E$ ne soit un entier strictement positif.
\end{itemize}
\end{defi}

Nous aurons besoin de la définition suivante:
\begin{defi}
\label{devi-log}
Soit $\E \in D ^{\mathrm{b}} _{\mathrm{coh}}( \smash{\D} ^\dag _{\X ^{\sharp}} (\hdag T) _{\Q} )$.
\begin{itemize}
\item On dit que $\E$ est strictement $0$-dévissable en log-isocristaux surconvergents sur $\X ^{\sharp}$, 
s'il existe un diviseur $T' $ contenant $T$, une immersion fermée de $\V$-schémas formels lisses 
$i\colon \ZZ ' \hookrightarrow \X$ tels que $i ^{-1}(\mathfrak{D})$ soit un diviseur à croisements normaux strict de 
$\ZZ'$, $T' \cap Z'$ soit un diviseur de $Z'$, $\E \in D ^{\mathrm{b}} _{\mathrm{coh}}( \smash{\D} ^\dag _{\X ^{\sharp}} (\hdag T') _{\Q} )$,
$\E$ soit à support dans $\ZZ'$ et, si on note $\ZZ ^{\prime \sharp} := (\ZZ ', i ^{-1} (\mathcal{D}))$ et $i ^{\sharp} \colon \ZZ ^{\prime \sharp} \hookrightarrow \X ^{ \sharp}$ l'immersion fermée exacte induite, alors 
les espaces de cohomologie de 
$i ^{\sharp !} (\E)$ sont des 
isocristaux sur $\ZZ ^{\prime \sharp} $
surconvergents le long de $T' \cap Z'$.  

\item Par récurrence sur $n \in \N$, on dit que 
$\E$ est strictement $n+1$-dévissable en log-isocristaux surconvergents sur $\X ^{\sharp}$, 
s'il existe un triangle distingué dans 
$D ^{\mathrm{b}} _{\mathrm{coh}}( \smash{\D} ^\dag _{\X ^{\sharp}} (\hdag T) _{\Q} )$
de la forme
$$\E' \to \E \to \E'' \to \E' [1],$$
où $\E', \E''$ sont strictement $n$-dévissables en log-isocristaux surconvergents sur $\X ^{\sharp}$.
\end{itemize}

\end{defi}

\begin{nota}
\label{nota-preAbe}
On suppose que 
$\ZZ \cup \mathfrak{D}$ est un diviseur à croisements normaux strict de $\X$.
On note 
$\X ^{\flat}:= (\X , \ZZ \cup \mathfrak{D})$
le log-schéma formel logarithmique lisse,
$\alpha \colon \X ^{\flat} \to \X ^{\sharp}$ 
et
$\beta \colon \X ^{\sharp} \to \X $
les morphismes canoniques.

\end{nota}

\begin{rema}
\label{AC}
On garde les notations et hypothèses de \ref{nota-preAbe}.
D'après une remarque de \cite[4]{AC-weil2}, 
il existe un diagramme de la forme du carré de gauche: 
\begin{equation}
\xymatrix @R=0,4cm{
{\ZZ} 
\ar[r] _-{u '}
\ar@{=}[d] ^-{}
\ar@{}[dr]|\square
& 
{\X '} 
\ar@/_0.5pc/[l] _-{g '}
\ar[d] ^-{f}
\ar@{}[dr]|\square
&
{\X ^{\prime \sharp}} 
\ar[l] ^-{\beta '}
\ar[d] ^-{f ^{\sharp}}
\ar@{}[dr]|\square
\ar@/^0.5pc/[r] ^-{g ^{\prime \sharp}}
&
{\ZZ ^{\sharp}} 
\ar[l] ^-{u ^{\prime \sharp}}
\ar@{=}[d] 
\\ 
{\ZZ} 
\ar[r] _-{u}
& 
{\X} 
&
{\X ^{\sharp}} 
\ar[l] ^-{\beta }
&
{\ZZ ^{\sharp},} 
\ar[l] ^-{u ^{\sharp}}
 }
 \xymatrix @R=0,4cm{
{\X ^{\prime \sharp}} 
\ar[d] ^-{f ^{\sharp}}
\ar@{}[dr]|\square
&
{\X ^{\prime \flat}} 
\ar[l] ^-{\alpha '}
\ar[d] ^-{f ^{\flat}}
\\ 
{\X ^{\sharp}} 
&
{\X ^{\flat}} 
\ar[l] ^-{\alpha }
 }
\end{equation}
où $f$ est un morphisme étale, $g '$ est un morphisme lisse qui est une rétraction de $u'$ et tel que, 
en posant $\mathfrak{D}':= f ^{-1}\mathfrak{D}$, on ait  
$\mathfrak{D}'=g ^{\prime -1} \circ u ^{\prime -1} (\mathfrak{D}')$, 
où les trois autres carrés sont cartésiens par définition
(on remarque alors que tous les morphismes sont exacts).
\end{rema}

Le corollaire qui suit étend le théorème \cite[1.3.13]{caro-Tsuzuki}:
\begin{coro}
\label{1.3.13AM}
On garde les notations et hypothèses de \ref{nota-preAbe}.
Soit $\E$ un isocristal sur $\X ^{\flat}$ surconvergent le long de $T$ et
satisfaisant aux conditions 
(a) et (b) de \ref{conditions*}. 
Les propriétés suivantes sont alors satisfaites:  
\begin{enumerate}
\item Le complexe $ u ^{\sharp !} \circ \alpha _{+} (\E)$ est un complexe d'isocristaux sur $\ZZ ^{\sharp}$ surconvergents le long de $U$
 et satisfaisant aux conditions
(a) et (b) de \ref{conditions*}.
\item On dispose du triangle distingué dans $D ^{\mathrm{b}} _{\mathrm{coh}}( \smash{\D} ^\dag _{\X ^{\sharp}} ( T) _{\Q} )$ de la forme:
\begin{equation}
\notag
u ^{\sharp} _{+} \circ u ^{\sharp !} \circ \alpha _{+} (\E)
\to 
\alpha _{+} (\E)
\to 
\E (\hdag Z)
\to 
u ^{\sharp} _{+} \circ u ^{\sharp !} \circ \alpha _{+} (\E) [1].
\end{equation}
En particulier, 
$\alpha _{+} (\E)$ est strictement
$1$-dévissable en log-isocristaux surconvergents sur $\X ^{\sharp}$.
\item Si $\E$ vérifie en outre la condition (p) de \ref{conditions*}, 
alors le morphisme canonique 
 $\alpha _{+} (\E)
\to 
\E (\hdag Z)$ 
est un isomorphisme.
 
\end{enumerate}

\end{coro}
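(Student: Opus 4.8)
The plan is to reduce the first assertion to the situation of \cite[1.3.13]{caro-Tsuzuki}, where a smooth retraction of the closed immersion is available, and then to deduce the second and third assertions formally from the coherence results of this chapter.

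\emph{First assertion.} Being an overconvergent isocristal and satisfying the non-Liouville conditions of \ref{conditions*} are local properties on $\ZZ$, so I may argue locally on $\X$ and invoke the diagram of \ref{AC}. Since the left-hand square there is cartesian with $\mathrm{id} _{\ZZ}$ on its vertical arrow, the restriction of the \'etale morphism $f$ to $\ZZ$ is the identity; the base-change isomorphisms for $\alpha _{+}$ (furnished by the right-hand cartesian square) and for $u ^{\sharp !}$ along $f$ then yield $u ^{\sharp !} \circ \alpha _{+} (\E) \riso u ^{\prime \sharp !} \circ \alpha ' _{+} (f ^{\flat *} \E)$ on $\ZZ ^{\sharp}$. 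As $f$ is \'etale, $f ^{\flat *} \E$ is again an isocristal on $\X ^{\prime \flat}$ overconvergent along $f ^{-1} (T)$ satisfying conditions (a) and (b). On $\X '$ the smooth retraction $g '$ of $u '$ places us exactly under the hypotheses of \cite[1.3.13]{caro-Tsuzuki}, whose first conclusion gives that $u ^{\prime \sharp !} \circ \alpha ' _{+} (f ^{\flat *} \E)$ is a complex of overconvergent isocristals on $\ZZ ^{\sharp}$ along $U$ satisfying (a) and (b); transporting back along the isomorphism above proves the first assertion.

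\emph{Second assertion.} The module $\alpha _{+} (\E)$ is a coherent $\smash{\D} ^\dag _{\X ^{\sharp}} (\hdag T) _{\Q}$-module. By the first assertion the cohomology spaces of $u ^{\sharp !} \alpha _{+} (\E)$ are overconvergent isocristals on $\ZZ ^{\sharp}$; in particular $\mathcal{H} ^{0} u ^{\sharp !} \alpha _{+} (\E)$ is coherent and $\mathcal{H} ^{1} u ^{\sharp !} \alpha _{+} (\E)$ is an overconvergent isocristal, so \ref{coro-theo-u^*coh} applies to $\alpha _{+} (\E)$ and shows that $\R \underline{\Gamma} ^{\dag} _Z (\alpha _{+} \E)$ is coherent. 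This is condition (5) of \ref{5assert-eq}; hence all five conditions hold and, by \ref{rema-3.4.10}, the canonical morphism $u ^{\sharp} _{+} \circ u ^{\sharp !} \alpha _{+} (\E) \riso \R \underline{\Gamma} ^{\dag} _Z (\alpha _{+} \E)$ is an isomorphism. Inserting this into the localization triangle $\R \underline{\Gamma} ^{\dag} _Z (\alpha _{+} \E) \to \alpha _{+} \E \to (\hdag Z)(\alpha _{+} \E) \to +1$ and using that $\alpha$ is an isomorphism over $\X \setminus Z$, whence $(\hdag Z)(\alpha _{+} \E) \riso \E (\hdag Z)$, produces the announced triangle. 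For strict $1$-d\'evissabilit\'e I check via \ref{devi-log} that both outer terms are strictly $0$-d\'evissable: $\E (\hdag Z)$ with the trivial immersion $i = \mathrm{id} _{\X}$ and $T ' = T \cup Z$, and $u ^{\sharp} _{+} u ^{\sharp !} \alpha _{+} \E$ with $i = u$, using the Berthelot--Kashiwara isomorphism $u ^{\sharp !} (u ^{\sharp} _{+} u ^{\sharp !} \alpha _{+} \E) \riso u ^{\sharp !} \alpha _{+} \E$ (see \ref{BK-cohloc}) together with the first assertion.

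\emph{Third assertion.} Assuming in addition condition (p), I reduce once more via \ref{AC} to the retraction case and invoke the corresponding vanishing statement of \cite[1.3.13]{caro-Tsuzuki} to obtain $u ^{\sharp !} \alpha _{+} (\E) = 0$, whence $u ^{\sharp} _{+} u ^{\sharp !} \alpha _{+} \E = 0$; the triangle of the second assertion then shows that $\alpha _{+} (\E) \to \E (\hdag Z)$ is an isomorphism. The delicate point of the whole argument is the first assertion: one must secure the base-change compatibilities of $\alpha _{+}$ and $u ^{\sharp !}$ along the \'etale morphism of \ref{AC} in the logarithmic arithmetic setting and verify that the non-Liouville conditions (a), (b) (and later (p)) are preserved under the \'etale pullback $f ^{\flat *}$ — this is exactly what permits dispensing with the global smooth retraction hypothesis of \cite[1.3.13]{caro-Tsuzuki}.
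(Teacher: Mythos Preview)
Your proof is correct and follows essentially the same strategy as the paper: reduce via the diagram of \ref{AC} and the smooth base-change isomorphism $f^{\sharp !}\alpha_+(\E)\riso\alpha'_+f^{\flat !}(\E)$ (the paper cites \cite[5.4.6]{caro-stab-sys-ind-surcoh}) to the retraction case handled by \cite[1.3.13]{caro-Tsuzuki}, then feed the resulting isocristal description of $u^{\sharp!}\alpha_+(\E)$ into \ref{coro-theo-u^*coh} and \ref{5assert-eq} to obtain the localization triangle in $D^{\mathrm{b}}_{\mathrm{coh}}$. Two minor remarks: the paper identifies $u^{\sharp!}\alpha_+(\E)$ as a complex of isocristals not by citing the conclusion of \cite[1.3.13]{caro-Tsuzuki} directly but by applying $g^{\sharp}_+$ to the localization triangle (using $g^{\sharp}_+ u^{\sharp}_+=\mathrm{id}$) to exhibit it as a cone of $g^{\flat}_+(\E)\to g^{\flat}_+(\E(\hdag Z))$, and for part~3 the paper appeals to \cite[1.1.22.1]{caro-Tsuzuki} (the computational ingredient behind \cite[1.3.13]{caro-Tsuzuki}) rather than to \cite[1.3.13]{caro-Tsuzuki} itself to get that last morphism to be an isomorphism; your shortcut is fine provided the cited theorem indeed contains the vanishing under~(p).
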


\begin{proof}
Grâce à la remarque \ref{AC}
et grâce à l'isomorphisme de changement de base par un morphisme lisse (voir \cite[5.4.6]{caro-stab-sys-ind-surcoh})
qui donne l'isomorphisme 
$f ^{\sharp!} \circ \alpha _{+} (\E) \riso
\alpha ' _+ \circ f ^{\flat!} (\E)$, 
on se ramène au cas où 
il existe un morphisme lisse 
$g \colon \X \to \ZZ$ tel que $g \circ u = id$ et 
$\mathfrak{D}=g ^{-1} \circ u ^{-1} (\mathfrak{D})$.
On note alors $g ^{\sharp}\colon \X ^{\sharp} \to \ZZ ^{\sharp}$
le morphisme induit par $g$
et
$g ^{\flat}:= g ^{\sharp}\circ u$.
Soit $\E ^{(\bullet)} \in  \smash{\underrightarrow{LM}} ^{\mathrm{b}} _{\mathbb{Q}, \mathrm{coh}} 
( \smash{\widehat{\mathcal{D}}} _{\mathfrak{X} ^{\flat}} ^{(\bullet)} (T))$ 
un objet tel que $\underrightarrow{\lim}~\E ^{(\bullet)} \riso \E$.
Comme 
$(\hdag Z) \circ \alpha ^{(\bullet)}  _{+} (\E^{(\bullet)} )
=
\E ^{(\bullet)}  (\hdag Z)$,
on dispose alors du triangle distingué dans 
$\smash{\underrightarrow{LD}} ^{\mathrm{b}} _{\mathbb{Q}, \mathrm{qc}} 
( \smash{\widehat{\mathcal{D}}} _{\mathfrak{X} ^{\sharp}} ^{(\bullet)} (T))$:
\begin{equation}
\label{1.3.13AM-tri1}
u ^{\sharp (\bullet)} _{+} \circ u ^{\sharp (\bullet)!} \circ \alpha ^{ (\bullet)} _{+} (\E^{(\bullet)} )
\to 
\alpha ^{(\bullet)}  _{+} (\E^{(\bullet)} )
\to 
\E ^{(\bullet)}  (\hdag Z)
\to 
u ^{\sharp (\bullet)} _{+} \circ u ^{\sharp (\bullet) !} \circ \alpha ^{ (\bullet)} _{+} (\E^{(\bullet)} ) [1].
\end{equation}
En appliquant le foncteur $g ^{\sharp (\bullet)} _{+}$ au triangle 
\ref{1.3.13AM-tri1}, 
comme 
$g ^{\sharp (\bullet)} _{+} \circ u ^{\sharp (\bullet)} _{+} =id$, 
comme 
$g ^{\sharp (\bullet)} _{+} \circ \alpha ^{ (\bullet)} _{+} =
g ^{\flat (\bullet)} _{+}$,
on obtient alors le triangle distingué dans 
$\smash{\underrightarrow{LD}} ^{\mathrm{b}} _{\mathbb{Q}, \mathrm{qc}} 
( \smash{\widehat{\mathcal{D}}} _{\ZZ ^{\sharp}} ^{(\bullet)} (U))$:
\begin{equation}
\label{1.3.13AM-tri2}
u ^{\sharp (\bullet)!} \circ \alpha ^{ (\bullet)} _{+} (\E^{(\bullet)} ) 
\to 
g ^{\flat (\bullet)} _{+} (\E^{(\bullet)} )
\to 
g ^{\flat (\bullet)} _{+} (\E ^{(\bullet)}  (\hdag Z))
\to 
u ^{\sharp (\bullet) !} \circ \alpha ^{ (\bullet)} _{+} (\E^{(\bullet)} ) [1].
\end{equation}
En appliquant le foncteur $ \underrightarrow{\lim}  $ à 
\ref{1.3.13AM-tri2}, 
comme 
$\alpha ^{ (\bullet)} _{+} (\E^{(\bullet)} ) 
\in  \smash{\underrightarrow{LM}} ^{\mathrm{b}} _{\mathbb{Q}, \mathrm{coh}} 
( \smash{\widehat{\mathcal{D}}} _{\mathfrak{X} ^{\sharp}} ^{(\bullet)} (T))$,
avec \cite[5.1.7]{caro-stab-sys-ind-surcoh},
on obtient: 
\begin{equation}
\label{1.3.13AM-tri3}
u ^{\sharp!} \circ \alpha  _{+} (\E ) 
\to 
g ^{\flat} _{+} (\E)
\to 
g ^{\flat} _{+} (\E   (\hdag Z))
\to 
u ^{\sharp!} \circ \alpha  _{+} (\E )  [1].
\end{equation}
Grâce au théorème \cite[1.3.13]{caro-Tsuzuki},
on déduit de \ref{1.3.13AM-tri3} que 
$u ^{\sharp!} \circ \alpha  _{+} (\E )$
est un complexe d'isocristaux sur $\ZZ ^{\sharp}$ surconvergents le long de $U$
 et satisfaisant aux conditions
(a) et (b) de \ref{conditions*}.
Il résulte alors de \ref{coro-theo-u^*coh}, de \ref{5assert-eq} 
que $u ^{\sharp (\bullet)!} \circ \alpha ^{ (\bullet)} _{+} (\E^{(\bullet)} ) \in
\smash{\underrightarrow{LD}} ^{\mathrm{b}} _{\mathbb{Q}, \mathrm{coh}} 
( \smash{\widehat{\mathcal{D}}} _{\ZZ ^{\sharp}} ^{(\bullet)} (U))$. 
Le triangle \ref{1.3.13AM-tri1} est donc un triangle  distingué de 
$\smash{\underrightarrow{LD}} ^{\mathrm{b}} _{\mathbb{Q}, \mathrm{coh}} 
( \smash{\widehat{\mathcal{D}}} _{\mathfrak{X} ^{\sharp}} ^{(\bullet)} (T))$.
On en déduit la partie 2) du corollaire.
Lorsque 
$\E$ satisfait aux conditions 
(a),  (b) et (p) de \ref{conditions*}
 il découle de  \cite[1.1.22.1]{caro-Tsuzuki} que 
$ g ^{\flat} _{+} (\E)
\to 
g ^{\flat} _{+} (\E   (\hdag Z))$
est un isomorphisme (cela se vérifie comme \cite[1.3.13]{caro-Tsuzuki}
découle de \cite[1.1.22.1]{caro-Tsuzuki}).
Via \ref{1.3.13AM-tri3}, cela entraîne que 
$u ^{\sharp!} \circ \alpha  _{+} (\E ) =0$ et donc 
$u ^{\sharp} _{+} \circ u ^{\sharp !} \circ \alpha _{+} (\E)=0$.
On a ainsi validé la partie 3) du corollaire.

\end{proof}

Le corollaire qui suit étend \cite[2.2.9]{caro-Tsuzuki}
et 
\cite[2.3.4]{caro-Tsuzuki}:
\begin{coro}
Supposons pour simplifier les énoncés que $\X$ soit intègre. 
Soient $\mathfrak{D} _1, \dots, \mathfrak{D} _s$ les composantes irréductibles de $\mathfrak{D}$. 
Soit $1\leq r\leq s$, $\mathfrak{D}'':= \cup _{1 \leq i \leq r} ~\mathfrak{D} _i$,
$\mathfrak{D}':= \cup _{r +1 \leq i \leq s} ~\mathfrak{D} _i$,
$\X ^{\flat}:= (\X, \mathfrak{D}')$, 
$\alpha \colon \X ^{\sharp} \to \X ^{\flat}$ le morphisme canonique.
Soit $\E$ un isocristal sur $\X ^{\sharp}$ surconvergent le long de $T$.
\begin{enumerate}
\item Si $\E$ vérifie les conditions (a) et (b) de \ref{conditions*}, alors
$\alpha _{+} (\E)$ est 
strictement
$r$-dévissable en log-isocristaux surconvergents sur $\X ^{\flat}$ 
satisfaisant aux condition (a) et (b) de \ref{conditions*}.
\item Si $\E$ vérifie les condition (a), (b) et (p) de \ref{conditions*}, le morphisme canonique
$\alpha _{+} (\E) \to \E (\hdag D'')$ est un isomorphisme. 

\end{enumerate}

\end{coro}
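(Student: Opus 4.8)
The plan is to argue by induction on $r$, and for the induction to close I first generalize the statement to complexes: I claim that for any $\mathcal{C}\in D^{\mathrm{b}}_{\mathrm{coh}}(\smash{\D}^\dag_{\X^\sharp}(\hdag T)_\Q)$ whose cohomology spaces are isocristaux sur $\X^\sharp$ surconvergents le long de $T$ satisfaisant aux conditions (a) et (b), the complex $\alpha_+(\mathcal{C})$ is strictement $r$-dévissable sur $\X^\flat$ with all pieces satisfying (a) et (b) (resp. is isomorphic to $\mathcal{C}(\hdag D'')$ when (p) holds also); the corollary is the case where $\mathcal{C}=\E$ is concentrated in degree $0$. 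The point of allowing complexes is that by \ref{devi-log} a bounded complex supported on a single smooth $\ZZ'$ whose $i^{\sharp!}$-cohomology consists of isocristaux surconvergents is already strictement $0$-dévissable, so the passage from a module to a complex costs no dévissage level. For the induction I set $\X^\natural:=(\X,\mathfrak{D}_2\cup\cdots\cup\mathfrak{D}_s)$, write $\alpha=\alpha'\circ\alpha_1$ with $\alpha_1\colon\X^\sharp\to\X^\natural$ forgetting the log structure along $\mathfrak{D}_1$ and $\alpha'\colon\X^\natural\to\X^\flat$ forgetting $\mathfrak{D}_2,\dots,\mathfrak{D}_r$, and I note that $r=0$ is trivial ($\alpha=\mathrm{id}$, take $\ZZ'=\X$ in \ref{devi-log}).

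Since $\mathfrak{D}_1$ is a smooth divisor with $\mathfrak{D}_1\cup(\mathfrak{D}_2\cup\cdots\cup\mathfrak{D}_s)=\mathfrak{D}$ a strict normal crossing divisor, I apply the corollary \ref{1.3.13AM} to the exact closed immersion $u\colon\mathfrak{D}_1\hookrightarrow\X$ (with $\mathfrak{D}_1$ in the role of $\ZZ$ and $\mathfrak{D}_2\cup\cdots\cup\mathfrak{D}_s$ in the role of the residual log divisor), extended to the complex $\mathcal{C}$ by working cohomology by cohomology and using that isocristaux surconvergents satisfaisant (a), (b) are stable under subquotients. This produces on $\X^\natural$ the distinguished triangle
\[
u^\sharp_+\circ u^{\sharp!}(\alpha_{1,+}\mathcal{C})\to\alpha_{1,+}\mathcal{C}\to\mathcal{C}(\hdag D_1)\to+1,
\]
in which $u^{\sharp!}(\alpha_{1,+}\mathcal{C})$ is a complex of isocristaux surconvergents sur $\mathfrak{D}_1$ satisfaisant (a), (b), and $\mathcal{C}(\hdag D_1)$ is a complex of isocristaux surconvergents sur $\X^\natural$ (le long de $T\cup D_1$) satisfaisant (a), (b). Applying the exact functor $\alpha'_+$ gives on $\X^\flat$ a triangle $L\to\alpha_+\mathcal{C}\to R\to+1$ whose right term $R=\alpha'_+(\mathcal{C}(\hdag D_1))$ is strictement $(r-1)$-dévissable by the induction hypothesis applied over $\X^\natural$ to its $r-1$ components $\mathfrak{D}_2,\dots,\mathfrak{D}_r$. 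For the left term I use functoriality of direct images, $\alpha'_+\circ u^\sharp_+=(\alpha'\circ u^\sharp)_+=v^\flat_+\circ(\alpha'_1)_+$, where $v^\flat\colon(\mathfrak{D}_1)^\flat\hookrightarrow\X^\flat$ is the induced exact closed immersion and $\alpha'_1$ forgets on $\mathfrak{D}_1$ the $r-1$ divisors $u^{-1}\mathfrak{D}_2,\dots,u^{-1}\mathfrak{D}_r$; by the induction hypothesis over $\mathfrak{D}_1$, $(\alpha'_1)_+(u^{\sharp!}\alpha_{1,+}\mathcal{C})$ is strictement $(r-1)$-dévissable, and pushing it forward by the closed immersion $v^\flat_+$ preserves this. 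Thus $L$ and $R$ are both $(r-1)$-dévissables, and the triangle exhibits $\alpha_+\mathcal{C}$ as strictement $r$-dévissable with all pieces satisfying (a), (b); this proves part 1.

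For part 2 I run the same induction but invoke part 3 of \ref{1.3.13AM}: under (p) the canonical arrow $\alpha_{1,+}\mathcal{C}\to\mathcal{C}(\hdag D_1)$ is an isomorphism (the left term of the triangle vanishes). Hence $\alpha_+\mathcal{C}=\alpha'_+(\alpha_{1,+}\mathcal{C})\riso\alpha'_+(\mathcal{C}(\hdag D_1))$, and since $\mathcal{C}(\hdag D_1)$ still satisfies (a), (b), (p) along the components $\mathfrak{D}_2,\dots,\mathfrak{D}_s$ non incluses dans $T\cup D_1$, the induction hypothesis for part 2 yields $\alpha'_+(\mathcal{C}(\hdag D_1))\riso\mathcal{C}(\hdag D_1)(\hdag(D_2\cup\cdots\cup D_r))=\mathcal{C}(\hdag D'')$, whence $\alpha_+\mathcal{C}\riso\mathcal{C}(\hdag D'')$.

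The main obstacle is the bookkeeping of the dévissage length in part 1: one must ensure that passing from $\mathcal{C}$ to the complex $u^{\sharp!}(\alpha_{1,+}\mathcal{C})$, and the pushforward by the closed immersion $v^\flat$, do not raise the level beyond $r-1$ — this is precisely why the statement must be set up for complexes rather than single modules, so that the several cohomology sheaves produced by $u^{\sharp!}$ are absorbed into a single $0$-dévissable stratum rather than spawning extra triangles. The two supporting technical points are the extension of \ref{1.3.13AM} to complexes (resting on the stability of isocristaux satisfaisant (a), (b) under subquotients, together with the localization triangle and Berthelot--Kashiwara) and the verification, by unwinding \ref{devi-log} and composing the auxiliary closed immersions with $v^\flat$, that $v^\flat_+$ preserves strict $(r-1)$-dévissabilité along with the conditions (a) et (b).
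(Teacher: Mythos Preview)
Your approach is essentially the same as the paper's: induction on $r$, factoring $\alpha=\alpha'\circ\alpha_1$ through the intermediate log scheme obtained by forgetting only $\mathfrak{D}_1$, applying \ref{1.3.13AM} to produce the localization triangle, and then invoking the induction hypothesis on both ends (on $\X$ for $\mathcal{C}(\hdag D_1)$ and on $\mathfrak{D}_1$ for the complex $u^{\sharp!}\alpha_{1,+}\mathcal{C}$), together with the compatibility $\alpha'_+\circ u^{\sharp}_+ \riso u^{\flat}_+\circ\beta'_+$. Your explicit reformulation for complexes is a genuine clarification: the paper also applies the induction hypothesis to the \emph{complex} $\FF=u^{\sharp\flat!}\alpha_{1+}(\E)$, justifying this only by the parenthetical remark that $\beta'_+$ is exact on isocristaux, whereas you spell out why no extra d\'evissage level is incurred. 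One small omission: the paper separately disposes of the degenerate case $D_1\subset T$ (where $\alpha_{1,+}(\E)=\E$ and \ref{1.3.13AM} does not literally apply because $T\cap D_1$ is not a divisor of $D_1$); you should insert this case distinction before invoking \ref{1.3.13AM}.
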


\begin{proof}
Vérifions d'abord 1). 
On procède par récurrence sur $r$. Le cas où $r =1$ est le corollaire
\ref{1.3.13AM}. Supposons donc $r \geq 2$. 
Posons $\ZZ := \mathfrak{D}  _1$, 
$\mathfrak{D}  _{\geq 2}:= \cup _{2 \leq i \leq s} ~\mathfrak{D} _i$,
$\X ^{\sharp \flat}:= (\X, \mathfrak{D} _{\geq 2})$,
$\ZZ ^{\sharp \flat} := (\ZZ, \ZZ \cap \mathfrak{D} _{\geq 2})$,
$\ZZ ^{\flat} := (\ZZ, \ZZ \cap \mathfrak{D} ')$.
On note 
$\alpha _1\colon \X ^{\sharp} \to \X ^{\sharp \flat}$, 
$\alpha ' \colon \X ^{\sharp \flat} \to \X ^{\flat}$, 
$\beta '\colon \ZZ ^{\sharp \flat} \to \ZZ ^{\sharp}$, 
$u ^{\sharp \flat} \colon \ZZ ^{\sharp \flat} \to \X ^{\sharp \flat}$
et
$u ^{ \flat} \colon \ZZ ^{\flat} \to \X ^{\flat}$
les morphismes canoniques. 
Lorsque $T \supset Z$, on calcule facilement (voir par exemple \cite[2.2.1]{caro-Tsuzuki})
que $ \alpha _{1+} (\E)= \E$.
Supposons alors que $T \cap Z$ soit un diviseur de $Z$.
D'après \ref{1.3.13AM}, on dispose alors du triangle distingué de localisation
dans $D ^{\mathrm{b}} _{\mathrm{coh}}( \smash{\D} ^\dag _{\X ^{\sharp \flat }} ( T) _{\Q} )$
\begin{equation}
\notag
u ^{\sharp \flat} _{+} \circ u ^{\sharp \flat !} \circ \alpha _{1+} (\E)
\to 
\alpha _{1+} (\E)
\to 
\E (\hdag Z)
\to 
u ^{\sharp \flat } _{+} \circ u ^{\sharp \flat  !} \circ \alpha _{1+} (\E) [1],
\end{equation}
tel que 
 $\FF := u ^{\sharp \flat !} \circ \alpha _{1+} (\E)$
 soit un complexe d'isocristaux sur $ \ZZ ^{\sharp \flat} $
surconvergent le long de $T \cap Z$
satisfaisant aux conditions (a) et (b) de \ref{conditions*}.
Or, par hypothèse de récurrence (et aussi parce que le foncteur $\beta ' _+$ est
exact sur la catégorie des isocristaux sur 
$ \ZZ ^{\sharp \flat} $
surconvergent le long de $T \cap Z$), 
$\alpha ' _+ (\E (\hdag Z _1))$ 
(resp. 
$\beta ' _+(\FF)$)
est 
strictement
$r-1$-dévissable en log-isocristaux surconvergents sur $\X ^{\flat}$
(resp. $\ZZ ^{\flat}$).
L'isomorphisme canonique 
$\alpha ' _+ \circ u ^{\sharp \flat} _{+} ( \FF )
\riso 
u ^{\flat} _{+} \circ \beta ' _+
(\FF)$
nous permet de conclure 1).
On établit 2) de manière identique.
\end{proof}

\bibliographystyle{smfalpha}
\providecommand{\bysame}{\leavevmode ---\ }
\providecommand{\og}{``}
\providecommand{\fg}{''}
\providecommand{\smfandname}{et}
\providecommand{\smfedsname}{\'eds.}
\providecommand{\smfedname}{\'ed.}
\providecommand{\smfmastersthesisname}{M\'emoire}
\providecommand{\smfphdthesisname}{Th\`ese}

\bigskip
\noindent Daniel Caro\\
Laboratoire de Mathématiques Nicolas Oresme\\
Université de Caen
Campus 2\\
14032 Caen Cedex\\
France.\\
email: daniel.caro@unicaen.fr

\end{document}